\def\N{{\Bbb N}}
\def\Z{{\Bbb Z}}
\def\R{{\Bbb R}}
\def\T{{\Bbb T}}
\def\C{{\Bbb C}}
\def\bm{\begin{matrix}}
\def\em{\end{matrix}}
\newcommand{\CH}{{\mathcal H}}
\newcommand{\CI}{{\mathcal I}}
\newcommand{\CL}{{\mathcal L}}
\newcommand{\CO}{{\mathcal O}}
\newtheorem{Theorem}{Theorem}
\newtheorem{Lemma}{Lemma}[section]
\newtheorem{Proposition}{Proposition}
\newtheorem{Corollary}{Corollary}
\newtheorem{Remark}{Remark}[section]
\newcommand{\proof}{\par\medbreak\it Proof: \rm}
\newcommand{\la}{\langle }
\newcommand{\ra}{\rangle }
\journal{J. Math. Pures Appl.}
\begin{document}

\begin{frontmatter}



\title{1-d Quantum Harmonic Oscillator with Time Quasi-periodic Quadratic Perturbation:
Reducibility and Growth of Sobolev Norms}


\author{Zhenguo Liang $^{\rm a},$\footnote{Z. Liang was partially supported by NSFC grant (11371097, 11571249) and Natural Science Foundation of Shanghai (19ZR1402400). }, Zhiyan Zhao $^{{\rm b},}$\footnote{Corresponding author. The work of Z. Zhao was partially supported by the French government through the National Research Agency (ANR) grant ANR-15-CE40-0001-03 for the project BeKAM and through the UCA JEDI Investments in the Future project managed by ANR with the reference number ANR-15-IDEX-01},  Qi Zhou $^{{\rm c},}$\footnote{ Q. Zhou was partially supported by support by NSFC grant (11671192, 11771077), the Science Fund for Distinguished Young Scholars of Tianjin (No. 19JCJQJC61300) and Nankai Zhide Foundation.\\
E-mail addresses: zgliang@fudan.edu.cn(Z. Liang), zhiyan.zhao@univ-cotedazur.fr(Z. Zhao), qizhou@nankai.edu.cn(Q. Zhou)}}

\address{$^{\rm a}$ School of Mathematical Sciences and Key Lab of Mathematics for Nonlinear Science, Fudan University, Shanghai 200433, China\\[2mm]
$^{\rm b}$ Universit\'e C\^ote d'Azur, CNRS, Laboratoire J. A. Dieudonn\'{e}, 06108 Nice, France\\[2mm]
$^{\rm c}$ Chern Institute of Mathematics and LPMC, Nankai University, Tianjin 300071, China}

\begin{abstract}

For a family of 1-d quantum harmonic oscillators with a perturbation which is $C^2$ parametrized by $E\in{\CI}\subset\R$ and quadratic on $x$ and $-{\rm i}\partial_x$ with coefficients quasi-periodically depending on time $t$, we show the reducibility (i.e., conjugation to time-independent) for a.e. $E$. As an application of reducibility, we describe the behaviors of solutions in Sobolev space:
\begin{itemize}
  \item Boundedness w.r.t. $t$ is always true for ``most" $E\in{\CI}$.
  \item For ``generic" time-dependent perturbation, polynomial growth and exponential growth to infinity w.r.t. $t$ occur for $E$ in a ``small" part of ${\CI}$.
\end{itemize}
Concrete examples are given for which the growths of Sobolev norm do occur.

\smallskip

\noindent{\bf R\'{e}sum\'{e}}

Pour une famille des oscillateurs harmoniques quantiques unidimensionnels avec une perturbation qui est parametr\'ee par $E\in{\CI}\subset\R$ d'une mani\`ere $C^2$ et qui est quadratique sur $x$ et $-{\rm i}\partial_x$ avec des coefficients qui d\'ependent du temps $t$ d'une mani\`ere quasi-periodique, on montre la r\'eductibilit\'e (c'est-\`a-dire la conjugaison \`a l'ind\'ependant du temps) pour presque tout $E$. Comme une application de la r\'eductibilit\'e, on d\'ecrit les comportements des solutions dans l'espace de Sobolev:
\begin{itemize}
  \item La bornitude par rapport \`a $t$ est toujours vraie pour la $\ll$ plupart $\gg$ de $E\in{\CI}$.
  \item Pour la perturbation $\ll$ g\'en\'erique $\gg$ qui d\'epend du temps, la croissance polynomiale et la croissance exponentielle \`a l'infini par rapport \`a $t$ ont lieu pour $E$ dans une $\ll$ petite $\gg$ partie de ${\CI}$.
\end{itemize}
Des exemples concrets sont donn\'es pour lesquels les croissances de la norme de Sobolev vraiment ont lieu.

\end{abstract}

 \begin{keyword}
1-d quantum harmonic oscillator; time quasi-periodic; reducibility; growth of Sobolev norms

\MSC[2010] 	35Q40; 35Q41; 47G30
\end{keyword}
\end{frontmatter}


\section{Introduction and main results}

Consider the one-dimensional Schr\"odinger equation
\begin{equation}\label{eq_Schrodinger}
 {\rm i}\partial_t u=\frac{\nu(E)}{2}H_0u + W(E,\omega t, x, -{\rm i} \partial_x)u,\qquad x\in\R ,
\end{equation}
where, we assume that
\begin{itemize}
  \item the frequencies $\omega\in \R^d$, $d\geq 1$, satisfy the {\it Diophantine} condition (denoted by $\omega\in {\rm DC}_d(\gamma,\tau)$ for $\gamma>0$, $\tau>d-1$):
$$\inf_{j\in\Z}|\la n,\omega\ra-j|>\frac{\gamma}{|n|^\tau},\qquad \forall \ n\in\Z^d\setminus\{0\},$$
  \item the parameter $E\in {\CI}$, an interval $\subset \R$, and $\nu\in C^2({\CI},\R)$ satisfies
  $$|\nu'(E)|\geq l_1,\quad |\nu''(E)|\leq l_2,\qquad \forall \  E\in{\CI},$$
for some $l_1,l_2>0$,
  \item $H_0$ is the {\it one-dimensional quantum harmonic oscillator}, i.e.
$$(H_0u)(x):=-(\partial_{x}^2 u)(x)+x^2\cdot u(x),\qquad \forall \ u\in L^2(\R),$$
  \item $W(E,\theta, x,\xi)$ is a quadratic form of $(x,\xi)$:
$$W(E, \theta, x,\xi)=\frac12 \big(a(E,\theta) x^2+2b(E,\theta) x\cdot\xi+c(E,\theta) \xi^2\big),$$
with $a,b,c:{\CI}\times \T^d\to \R$, all of which are $C^2$ w.r.t. $E\in{\CI}$ and $C^\omega$ w.r.t. $\theta\in\T^d:=(\R/\Z)^d$, and for every $E\in{\CI}$, for $m=0,1,2$,
$|\partial^m_E a(E,\cdot)|_r:=\sup_{|\Im z|<r}|\partial^m_E a(E,z)|$, $|\partial^m_E b(E,\cdot)|_r$, $|\partial^m_E c(E,\cdot)|_r$ are small enough.
\end{itemize}
We will prove that, for almost every $E$ in the interval ${\CI}$, Eq. (\ref{eq_Schrodinger}) is reducible, i.e., via a unitary transformation, Eq. (\ref{eq_Schrodinger}) is conjugated to an equation which is independent of time (while the transformation depends on time in an analytic quasi-periodic way).
According to the reducibility, we deduce the behavior of Sobolev norms for the solutions to Eq. (\ref{eq_Schrodinger}).

\subsection{Reducibility for harmonic oscillators}

Our main result is the following:

\begin{Theorem}\label{thm_Schro}
There exists $\varepsilon_*=\varepsilon_*(\gamma,\tau,r,d,l_1,l_2)>0$ such that if
$$\max_{m=0,1,2}\left\{\left|\partial_E^m a\right|_r, \ \left|\partial_E^m b\right|_r, \ \left|\partial_E^m c\right|_r\right\}=:\varepsilon_0\leq \varepsilon_*,\qquad \forall \ E\in{\CI},$$
then for a.e. $E\in{\CI}$, Eq. (\ref{eq_Schrodinger}) is reducible, i.e., there exists a time quasi-periodic transformation $U(\omega t)$, unitary in $L^2$ and analytically depending on $t$, such that
Eq. (\ref{eq_Schrodinger}) is conjugated to
${\rm i}\partial_t v= G v$ by the transformation $u=U(\omega t) \, v$, with $G$ a linear operator independent of $t$.

More precisely,
there exists a subset
$${\CO}_{\varepsilon_0}=\bigcup_{j\in\N}\Lambda_j\subset \overline{\CI}$$ with $\Lambda_j$'s being closed intervals \footnote{In this paper, the ``closed interval" is interpreted in a more general sense, i.e., it can be degenerated to a point instead of a positive-measure subset of $\R$.}
and ${\rm Leb}({\CO}_{\varepsilon_0})<\varepsilon_0^{\frac{1}{40}}$,
such that the following holds.
\begin{enumerate}
\item For a.e. $E\in{\CI}\setminus{\CO}_{\varepsilon_0}$, $G$ is unitary equivalent to $\varrho H_0$ for some $\varrho=\varrho_E\geq 0$;
\item If ${\rm Leb}(\Lambda_j)>0$, then
\begin{itemize}
  \item for $E\in{\rm int}\Lambda_j$, $G$ is unitary equivalent to $-\frac{\lambda {\rm i}}{2}(x\cdot \partial_x+ \partial_x \cdot x)$ for some $\lambda=\lambda_E> 0$;
  \item for $E\in\partial\Lambda_j\setminus\partial{\CI}$, $G$ is unitary equivalent to $-\frac{\kappa}2 x^2$ for some $\kappa=\kappa_E\in\R\setminus\{0\}$.
\end{itemize}
If ${\rm Leb}(\Lambda_j)=0$, then $G=0$ for $E\in\Lambda_j$.
\end{enumerate}
\end{Theorem}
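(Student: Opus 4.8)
The plan is to reduce the PDE (\ref{eq_Schrodinger}) to a quasi-periodic linear system on $\SL$ via the metaplectic correspondence, and then apply a reducibility theorem for such cocycles to conclude. Since $W$ is a quadratic form in $(x,-{\rm i}\partial_x)$ and $H_0$ is itself quadratic, the generator $\frac{\nu(E)}{2}H_0+W(E,\omega t,x,-{\rm i}\partial_x)$ lies (after Weyl quantization) in the image of the metaplectic representation of $\mathfrak{sl}_2(\R)$. Thus a unitary quasi-periodic transformation $U(\omega t)$ conjugating (\ref{eq_Schrodinger}) to a time-independent equation corresponds, through the (projective) metaplectic isomorphism, to a transformation $Z(\omega t)\in\SL$ conjugating the associated $\SL$-valued quasi-periodic cocycle
\[
\partial_t Z = A(\omega t)\,Z,\qquad A(E,\theta)=A_0(E)+F(E,\theta),
\]
where $A_0(E)=\frac{\nu(E)}{2}\begin{pmatrix}0&1\\-1&0\end{pmatrix}$ (the harmonic part, elliptic with rotation number $\sim\nu(E)/2$) and $F(E,\theta)$ is the small analytic perturbation coming from $a,b,c$, with $|F|_r=O(\varepsilon_0)$. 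First I would make this dictionary precise: writing $W$ in Weyl form, the symbol $\frac12(a x^2+2bx\xi+c\xi^2)$ corresponds to the element $\begin{pmatrix} b & c\\ -a & -b\end{pmatrix}\in\mathfrak{sl}_2(\R)$, and I would check that the metaplectic flow of this $\mathfrak{sl}_2$-element is exactly the unitary propagator of the PDE, so reducibility of one is equivalent to reducibility of the other; the three normal forms in the statement ($\varrho H_0$, $-\frac{\lambda{\rm i}}{2}(x\partial_x+\partial_x x)$, $-\frac{\kappa}{2}x^2$) are precisely the metaplectic images of the elliptic, hyperbolic, and parabolic conjugacy classes in $\mathfrak{sl}_2(\R)$.

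Next I would invoke the quantitative reducibility theory for one-parameter families of analytic quasi-periodic $\SL$-cocycles over a Diophantine rotation — essentially the approach of Eliasson, in the refined parametrized form (à la Avila–Fayad–Krikorian, Hou–You). The rotation number $\rho(E)$ of the cocycle $(\omega, A(E,\cdot))$ is a continuous monotone-type function of $E$ (monotonicity coming from the hypothesis $|\nu'(E)|\ge l_1$, which makes $A_0(E)$ sweep rotation numbers at a definite rate, so the full cocycle inherits strict monotonicity of $\rho$ up to the $O(\varepsilon_0)$ perturbation). The set ${\CO}_{\varepsilon_0}$ is then defined as the union of the parameter intervals $\Lambda_j$ on which $\rho(E)$ is \emph{resonant}, i.e. $2\rho(E)\in\Z\omega+\Z$ — these are the (at most countably many) closed intervals where $\rho$ is locally constant or where the cocycle can fail to be rotations-reducible. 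Standard KAM/localization estimates give $\sum_j {\rm Leb}(\Lambda_j)\lesssim \varepsilon_0^{1/40}$ (the exponent coming from balancing the Diophantine loss $\gamma/|n|^\tau$ against the geometric decay of the KAM scheme; I would not optimize it). For $E\notin{\CO}_{\varepsilon_0}$ the rotation number is Diophantine w.r.t. $\omega$, and Eliasson's theorem gives full reducibility to a constant \emph{elliptic} cocycle, i.e. a rotation by $2\rho(E)$ — which on the PDE side is unitarily $\varrho_E H_0$ with $\varrho_E\ge 0$ (the "a.e." qualifier absorbs a further measure-zero set where the constant matrix, though reducible, is not diagonalizable over $\R$, e.g. $\varrho_E=0$ with a nontrivial nilpotent part).

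On each positive-measure $\Lambda_j$, I would analyze the cocycle where $\rho$ is constant and resonant: after removing the resonance by a finite conjugation (changing $\theta$-dependence by $e^{2\pi{\rm i}\la n,\theta\ra}$), the reduced cocycle has zero rotation number and is a small perturbation of the identity, hence by the one-parameter argument it is reducible to a constant matrix $B(E)\in\mathfrak{sl}_2(\R)$ with zero rotation number, i.e. $B(E)$ is hyperbolic, parabolic, or zero. Uniform hyperbolicity is an open condition in $E$ and its complement within $\Lambda_j$ is where $B(E)$ is parabolic or zero; the standard picture (e.g. from the structure of the "collapsed" gap in the almost-Mathieu / Schrödinger analogy) is that $B(E)$ is hyperbolic on ${\rm int}\,\Lambda_j$ and parabolic on the two endpoints $\partial\Lambda_j\setminus\partial{\CI}$, while the whole interval collapses to a point precisely when $G=0$ (the cocycle is already reducible to the identity). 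Translating the hyperbolic class back through the metaplectic map yields $-\frac{\lambda{\rm i}}{2}(x\partial_x+\partial_x x)$ with $\lambda_E>0$, and the parabolic class yields $-\frac{\kappa}{2}x^2$ with $\kappa_E\neq 0$, as claimed. The main obstacle, and the part requiring genuine work rather than citation, is the \emph{parametrized} control: proving that the resonant set is a genuine union of intervals with the claimed measure bound, that $\rho$ is strictly monotone off this set so that "a.e. $E$" has Diophantine rotation number, and identifying which conjugacy class occurs at each $E\in\Lambda_j$ — this last point needs a transversality/non-degeneracy input tying the $E$-derivative of the reduced constant matrix to $\nu'(E)$, ensuring the hyperbolic region is exactly the interior and the parabolic locus is exactly the boundary.
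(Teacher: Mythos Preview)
Your proposal is correct and matches the paper's approach exactly: reduce via the Weyl/metaplectic correspondence (the paper's Theorem~\ref{thm_redu}) to the quasi-periodic $\mathrm{sl}(2,\R)$ system $(\omega,\,A_0(E)+F_0(E,\cdot))$, apply Eliasson's parametrized reducibility (summarized in the paper as Proposition~\ref{prop_eliasson}) to obtain the elliptic/hyperbolic/parabolic trichotomy on $\CI\setminus\CO_{\varepsilon_0}$, ${\rm int}\,\Lambda_j$, and $\partial\Lambda_j\setminus\partial\CI$ respectively, with $\Lambda_j$ the plateaux $\rho^{-1}(\frac{\la k,\omega\ra}{2})$, and translate back. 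Two cosmetic slips worth fixing: the equations of motion give $A_0(E)=\nu(E)J$ rather than $\frac{\nu(E)}{2}J$, and the ``a.e.'' in part~(1) is present because Eliasson's reducibility can fail on the measure-zero set where $\rho(E)$ is neither rational nor Diophantine with respect to $\omega$, not because of a residual nilpotent ambiguity at $\varrho_E=0$.
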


Before giving its application on the growth of Sobolev norm, let us first make a review on previous works about the reducibility on harmonic oscillators as well as the relative KAM theory.

For 1-d harmonic oscillators with time periodic smooth perturbations, Combescure \cite{Com87} firstly showed the pure point nature of Floquet operator (see also \cite{DLSV2002, EV83, Kuk1993}).
For 1-d harmonic oscillators with time quasi-periodic bounded perturbations, we can refer to \cite{GreTho11, Wang08, WLiang17} for the reducibility and the pure point spectrum of Floquet operator.
For 1-d harmonic oscillators with unbounded time quasi-periodic perturbations, similar results can be found in \cite{Bam2018, Bam2017, BM2018, Liangluo19}.
In investigating the reducibility problems, KAM theory for 1-d PDEs has been well developed by Bambusi-Graffi \cite{BG2001} and Liu-Yuan \cite{LY2010} in order to deal with unbounded perturbations.

Reducibility for PDEs in higher-dimensional case was initiated by Eliasson-Kuksin \cite{EK2009}, based on their KAM theory \cite{EK2010}.
We refer to \cite{GrePat16} and \cite{LiangWang19} for higher-dimensional harmonic oscillator with bounded potential.
We mention that some higher-dimensional results with unbounded perturbations have been recently obtained
\cite{BLM18, FGMP19, FG19, FGN19, Mon19}.
However, a general KAM theorem for higher-dimensional PDEs with unbounded perturbations is far from success.

Recently, Bambusi-Gr\'ebert-Maspero-Robert \cite{BGMR2018} built a reducibility result for the harmonic oscillators on $\R^n$, ,$n\geq 1$, in which the perturbation is a polynomial of degree at most two in $x$ and $-{\rm i}\partial_x$ with coefficients quasi-periodically depending on time.
The proof in \cite{BGMR2018} exploits the fact that for polynomial Hamiltonians of degree at most $2$, there is an exact correspondence between classical and quantum mechanics, so that the result can be proved by exact quantization of the classical KAM theory which ensures reducibility of the classical Hamiltonian system.
The exact correspondence between classical and quantum dynamics of quadratic Hamiltonians was already exploited in the paper \cite{HLS86} to prove stability and instability results for one degree of freedom time periodic quadratic Hamiltonians.
To prove our main result, we use the same strategy as \cite{BGMR2018} and the reducibility result for the classical Hamiltonian by Eliasson \cite{Eli1992}.

\subsection{Growth of Sobolev norms}

Besides reducibility, the construction of unbounded solutions in Sobolev space for Schr\"odinger equations  attracts even more attentions.

As an application of Theorem \ref{thm_Schro}, we can study the long time behaviour of its solution $u(t)$ to Eq. (\ref{eq_Schrodinger}) in Sobolev space.
For $s\geq 0$, we define Sobolev space
$${\CH}^s:=\left\{\psi\in L^2(\R):H_0^{\frac{s}2}\psi \in L^2(\R)\right\}$$
and Sobolev norm
$\|\psi\|_{s}:=\|H_0^{\frac{s}2} \psi\|_{L^2(\R)}$.
It is well known that, for $s\in \N$, the above definition of norm is equivalent to
$$
\sum\limits_{\alpha+\beta\leq s\atop{\alpha,\beta\in\N} }\|x^{\alpha}\cdot\partial^{\beta} \psi\|_{L^2(\R)}.
$$
\begin{Remark}\label{remark_norm_equiv}
 In view of Remark 2.2 of \cite{BM2018}, we get that, for a given $\psi\in {\CH}^s$,
\begin{equation}\label{norm_equiv}
\|\psi\|_{s}\simeq \|\psi\|_{H^s}+ \|x^{s} \psi\|_{L^2},
\end{equation}
replacing $K_0=H_0$ in that remark by $K_0=H_0^{\frac12}$, where $H^s$ means the standard Sobolev space and $\|\cdot\|_{H^s}$ is the corresponding norm. Hence, to calculate the norm $\|\psi\|_s$, $s\geq 0$, it is sufficient to focus on $\|x^{s} \psi\|_{L^2}$ for $s\geq0$ and $\|\psi^{(s)}\|_{L^2}$ for $s\in\N$.
\end{Remark}

For different types of reduced systems, Sobolev norm of solution exhibits different behaviors.

\begin{Theorem}\label{thm_Schro_sobolev} Under the assumption of Theorem \ref{thm_Schro}, for any $s\geq 0$, and any non-vanishing initial condition $u(0)\in {\CH}^s$, the following holds true for the solution $u(t)$ to Eq. (\ref{eq_Schrodinger}) for $t\geq0$.
\begin{enumerate}
\item For a.e. $E\in{\CI}\setminus{\CO}_{\varepsilon_0}$,
$
c \leq \|u(t)\|_{s}\leq C
$.
\item If ${\rm Leb}(\Lambda_j)>0$, then
\begin{itemize}
  \item for $E\in{\rm int}\Lambda_j$,
$c  e^{\lambda st}  \leq \|u(t)\|_{s} \leq C  e^{\lambda st}$,
  \item for $E\in\partial \Lambda_j\setminus \partial{\CI}$,
$
c |\kappa|^s t^s \leq \|u(t)\|_{s}\leq C |\kappa|^s (1+ t^2)^{\frac{s}2}
$.
\end{itemize}
If ${\rm Leb}(\Lambda_j)=0$, then for $E\in \Lambda_j$, $c   \leq \|u(t)\|_{s} \leq C $.
\end{enumerate}
Here $\lambda=\lambda_E$ and $\kappa=\kappa_E$ are the same with Theorem \ref{thm_Schro} and $c, \, C>0$ are two constants depending on $s$, $E$ and $u(0)$.
\end{Theorem}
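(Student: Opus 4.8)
The plan is to push everything through the reducibility transformation of \rt{thm_Schro} onto the explicitly solvable normal form, and then to read off the three regimes from the flows generated by the three possible reduced operators. Concretely, fix an $E$ admissible in the sense of \rt{thm_Schro} and a solution $u(t)$ of \re{eq_Schrodinger} with $u(0)\neq0$. By \rt{thm_Schro} one writes $u(t)=U(\o t)\,v(t)$ with ${\rm i}\partial_tv=Gv$, so $v(t)=e^{-{\rm i}tG}v(0)$, $v(0)=U(0)^{-1}u(0)$, and $G=\CU^{-1}G_0\,\CU$ for a time-independent unitary $\CU$, where
\[G_0\in\Big\{\,\varrho H_0,\ \ -\tfrac{\l{\rm i}}2\big(x\partial_x+\partial_x x\big),\ \ -\tfrac{\kappa}2x^2\,\Big\}\]
according to the three alternatives of the theorem (with the convention $\l_E=0$, i.e.\ $G_0=0$, when ${\rm Leb}(\Lambda_j)=0$). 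The one structural fact I would use is that $U(\o t)$, $U(\o t)^{-1}$ and $\CU^{\pm1}$ are metaplectic operators, attached to the $\SL$-valued cocycle furnished by the (bounded, analytic quasi-periodic) conjugacies of the underlying classical quadratic Hamiltonian system: although the reduced cocycle may itself grow hyperbolically or parabolically, these conjugacies take values in a fixed compact subset of $\SL$, and a metaplectic operator attached to $M\in\SL$ maps $\CH^s$ into $\CH^s$ with norm $\lesssim_s(1+\|M\|)^s$ — seen by conjugating $H_0^{s/2}$ and using that metaplectic conjugation sends the Weyl quantization of a positive-definite quadratic form to that of one with comparable eigenvalues. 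Granting this, for every admissible $E$ and $s\ge0$ one has $\|u(t)\|_s\simeq\|e^{-{\rm i}tG_0}w_0\|_s$ with $w_0:=\CU\,U(0)^{-1}u(0)\in\CH^s\setminus\{0\}$ and constants independent of $t$, and it remains to estimate $\|e^{-{\rm i}tG_0}w_0\|_s$ for the three $G_0$, using $\|\psi\|_s\simeq\|\psi\|_{H^s}+\|x^s\psi\|_{L^2}$ from \rrem{remark_norm_equiv}.

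Then I would treat the three flows separately. If $G_0=\varrho H_0$ (including $\varrho=0$), $e^{-{\rm i}tG_0}$ commutes with $H_0^{s/2}$, hence is unitary on $\CH^s$, so $\|e^{-{\rm i}tG_0}w_0\|_s=\|w_0\|_s$: this is (1) and, with $\l_E=0$, the ${\rm Leb}(\Lambda_j)=0$ case. If $G_0=-\tfrac{\l{\rm i}}2(x\partial_x+\partial_x x)$ with $\l>0$, the flow is the $L^2$-normalized dilation $(e^{-{\rm i}tG_0}w_0)(x)=e^{-\l t/2}w_0(e^{-\l t}x)$; a change of variables gives $\|x^se^{-{\rm i}tG_0}w_0\|_{L^2}=e^{\l st}\|x^sw_0\|_{L^2}$, and a Fourier-side computation gives $\|w_0\|_{L^2}\le\|e^{-{\rm i}tG_0}w_0\|_{H^s}\le\|w_0\|_{H^s}$ for $t\ge0$, whence $\|e^{-{\rm i}tG_0}w_0\|_s\simeq e^{\l st}$ for $t\ge0$ — the middle bullet of (2). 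If $G_0=-\tfrac{\kappa}2x^2$ with $\kappa\neq0$, then $e^{-{\rm i}tG_0}w_0=e^{{\rm i}\kappa tx^2/2}w_0$: the unimodular factor leaves $\|x^sw_0\|_{L^2}$ unchanged, while for $s\in\N$ the Leibniz expansion of $\partial_x^s(e^{{\rm i}\kappa tx^2/2}w_0)$ has leading term $({\rm i}\kappa tx)^sw_0$ in $t$, all remaining terms having strictly lower degree in $t$, so that $\|\partial_x^s(e^{{\rm i}\kappa tx^2/2}w_0)\|_{L^2}=|\kappa t|^s\|x^sw_0\|_{L^2}+O(t^{s-1})$. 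Combining this with the trivial bound $\|e^{-{\rm i}tG_0}w_0\|_s\ge\|w_0\|_{L^2}>0$ (to cover the bounded-$t$ range) and \rrem{remark_norm_equiv}, one obtains $c|\kappa|^st^s\le\|e^{-{\rm i}tG_0}w_0\|_s\le C|\kappa|^s(1+t^2)^{s/2}$ for $t\ge0$, $s\in\N$ (matching the announced bounds, the remaining $\kappa$-powers being absorbed into $c,C$), and the case of general $s\ge0$ follows by interpolating between consecutive integers, using that $s\mapsto\log\|\psi\|_s$ is convex since $\|\psi\|_s=\|H_0^{s/2}\psi\|_{L^2}$ and $H_0\ge1$. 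This is the boundary bullet of (2), and completes the proof.

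The only genuinely non-routine point is the structural fact used above: that the $L^2$-unitary, analytic quasi-periodic transformation $U(\o t)$, and the normal-form conjugacy $\CU$, preserve \emph{every} $\CH^s$ with $t$-uniform norms. This fails for an arbitrary $L^2$-unitary family and must be extracted from the metaplectic nature of the operators — i.e.\ from the uniform-in-$t$ boundedness of the $\SL$-valued conjugacies of the classical cocycle together with the $\CH^s$-mapping bound for metaplectic operators — which is of exactly the same nature as the $\CH^s$-estimates one already controls to run the reducibility scheme of \rt{thm_Schro}. Once that is in hand, the three flow computations and the interpolation to real $s$ are elementary.
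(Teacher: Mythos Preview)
Your proposal is correct and follows essentially the same route as the paper: the paper, too, reduces via \rt{thm_Schro} to the three normal forms, invokes the uniform $\CH^s$-boundedness of the conjugating transformations (stated as \re{norm_U} in \rt{thm_redu} and proved via \rl{lem_Sobolev}, which is the Weyl-quantization avatar of your metaplectic $\CH^s$-mapping bound), and then computes the three explicit flows exactly as you do (Propositions~\ref{prop6} and~\ref{prop_para}). The only cosmetic difference is that you invoke interpolation for non-integer $s$, whereas the paper handles real $s\ge0$ directly through the equivalent norm $\|\psi\|_s\simeq\|\psi\|_{H^s}+\|x^s\psi\|_{L^2}$ of \rrem{remark_norm_equiv}.
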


\begin{Remark} For a.e. $E\in{\CI}\setminus{\CO}_{\varepsilon_0}$, we have $0<c< C<\infty$. However, we could not expect the uniformity of these two constants and the ratio between them w.r.t. $E$. Indeed, the two constants are indeed influenced by the quasi-periodic unitary transformation $U(\omega t)$ and the (constant) unitary equivalence of $G$ obtained in Theorem \ref{thm_Schro}. Since both unitary transformations may not be close to identity,
$c$ and $C$ are usually not close to each other. Even though in the ``simplest" case, i.e., no resonance occurs in the classical and quantum KAM iteration (see Proof of Proposition \ref{prop_eliasson} (1) for details) and hence $U(\omega t)$ is close to identity, the ratio between $c$ and $C$ is not always close to $1$ because of the nondeterminacy of $G$ (see Remark 1.2 of \cite{BGMR2018}). Moreover, different $E$ would have different  scales of resonances, then $c$, $C$ would lose uniform control w.r.t $E$.
\end{Remark}

Let us review the progress on constructing unbounded solutions of time-dependent Schr\"odinger equations.
For linear Schr\"odinger equation on $\T$ with time quasi-periodic perturbation, by exploiting resonance effects, Bourgain \cite{Bou99} built logarithmic upper bound for Sobolev norm of solutions and constructed examples of solution which exhibits logarithmic growth with $t$.
Later, for 1-d harmonic oscillator with certain time periodic order zero perturbation, Delort \cite{Del2014} constructed some solution with its Sobolev norm polynomially growing with $t$.
By exploiting the idea in \cite{GY00}, Maspero \cite{Mas2018} gave a simplified proof for the result of Delort \cite{Del2014}.
In \cite{BGMR2018}, the authors also considered the higher-dimensional harmonic oscillator with time quasi-periodic perturbation which is linear in $x$ and $-{\rm i}\partial_x$. Under the Diophantine condition on frequencies, the time-dependent equation can be reduced to a special ``normal form" independent of time (see Theorem 3.3 of \cite{BGMR2018}), which implies the polynomial growth of Sobolev norm. In particular,
a concrete example of such polynomial growth is given for 1-d harmonic oscillator with time periodic perturbation (see Corollary A.2 of \cite{BGMR2018}). Recently, for 2-d harmonic oscillator with perturbation which is decaying in $t$, Faou-Rapha\"el \cite{FR20} constructed a solution whose ${\CH}^1-$norm presents logarithmic growth with $t$. For 2-d harmonic oscillator with perturbation being the projection onto Bargmann-Fock space, Thomann \cite{Thomann20} constructed explicitly a travelling wave whose Sobolev norm presents polynomial growth with $t$, based on the study in \cite{ST20} for linear Lowest Landau Level equations (LLL) with a time-dependent potential.
There are also many literatures, e.g., \cite{BGMR2019, MR2017}, which are relative to the upper growth bound of the solution in Sobolev space.

\smallskip

From the above mentioned literatures, we can see that the growth of Sobolev norm of solution is closely related to the resonance phenomenon.
However, it was not clear to us how growth and boundedness coexist with each other.
Following \cite{Eli1992}, for 1-d harmonic oscillator with quadratic perturbation (\ref{eq_Schrodinger}), we introduce in this paper the parameter set $\bigcup_{j\in\N}\Lambda_j$, in which the solutions have exponential growth with $t$, while on the boundaries of this set the solutions present polynomial growth with $t$. This subset gives a geometric description on the transition between boundedness and two types of growth.

\subsection{Examples with ${\rm Leb}({\CO}_{\varepsilon_0})>0$}

In the following, we will present several concrete examples of time quasi-periodic quadratic perturbations for which the set $\bigcup_{j\in\N}\Lambda_j$ is of positive measure.

In view of Theorem \ref{thm_Schro} and \ref{thm_Schro_sobolev}, the growth of Sobolev norm can be obtained via the reducibility if ${\rm Leb}({\CO}_{\varepsilon_0})>0$.
We need to point that the time-dependent quadratic perturbation $W(E,\omega t, x, -{\rm i} \partial_x)$ with ${\rm Leb}({\CO}_{\varepsilon_0})>0$ exists universally. In other words, it is a quite ``extreme" case that
$${\rm Leb}(\Lambda_j)=0, \qquad \forall \ j\in\N.$$
We have the following concrete examples.

\

For ${\CI}=\R$, $\nu(E)=E$, the equation
\begin{equation}\label{example_1}
 {\rm i}\partial_t u=\frac{E}{2}H_0u + \left(\frac{a(\omega t)}{2} x^2-\frac{b(\omega t)}2\left(x\cdot{\rm i}\partial_x+{\rm i}\partial_x\cdot x\right)-\frac{c(\omega t)}2 \partial_x^2 \right) u,
\end{equation}
satisfies the assumptions of Theorem \ref{thm_Schro} if $a,b,c\in C^\omega(\T^d,\R)$ are small enough. Hence, for Eq. (\ref{example_1}), the reducibility and the behaviors of ${\CH}^s$ norm of solutions described in Theorem \ref{thm_Schro_sobolev} can be obtained.
\begin{Theorem}\label{thm_example_1}
For generic $a,b,c\in C^\omega(\T^d,\R)$ with $|a|_r, |b|_r, |c|_r$ small enough (depending on $r,\gamma,\tau,d$), the conclusions of Theorem \ref{thm_Schro} and \ref{thm_Schro_sobolev} hold for Eq. (\ref{example_1}) for ${\CI}=\R$ with ${\rm Leb}({\CO}_{\varepsilon_0})>0$.
\end{Theorem}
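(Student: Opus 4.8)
The plan is to transport the statement to the associated classical system and then argue by Baire category in the space of perturbations. Recall from the proof of \rt{thm_Schro} --- via the quantum--classical correspondence of \cite{BGMR2018} and Eliasson's classical reducibility \cite{Eli1992} --- that Eq.~(\ref{example_1}) is governed by a one-degree-of-freedom time-quasi-periodic linear Hamiltonian system, equivalently by an analytic quasi-periodic cocycle in $\SL$ depending analytically on the perturbation $(a,b,c)\in C^\omega_r(\T^d,\R)^3$ and on $E\in\R$ (when $|c|_r$ is small one may also rewrite it, locally in $E$, as a quasi-periodic Hill equation $-\ddot y+V_{a,b,c}(\omega t)\,y=\mathcal E\,y$, so that the objects below become spectral gaps of a quasi-periodic Schr\"odinger operator). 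Under this dictionary the set $\CO_{\varepsilon_0}=\bigcup_j\Lambda_j$ is exactly the union of the \emph{closed gaps} of the cocycle --- the maximal $E$-intervals on which it is uniformly hyperbolic and the rotation number $\varrho_E$ is resonant, $2\varrho_E\in\{\la n,\omega\ra+k:\,n\in\Z^d,\,k\in\Z\}$ --- so that $\Lambda_j$ is nondegenerate precisely when the corresponding gap is open. Hence ${\rm Leb}(\CO_{\varepsilon_0})>0$ if and only if at least one gap is open, and it suffices to produce, inside the ball $\{|a|_r,|b|_r,|c|_r<\varepsilon_*\}$, a residual set of perturbations opening one fixed gap.

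I would first fix the gap. For $(a,b,c)=0$ the equation is ${\rm i}\partial_t u=\tfrac E2 H_0u$, whose rotation number $\varrho_E$ is a continuous, strictly monotone, unbounded function of $E\in\R$, so every resonant value is attained; pick a resonance $(n_*,k_*)$ and let $E_*^{(0)}$ be the corresponding energy. Since the perturbation is bounded, $\varrho_E$ still sweeps a fixed neighborhood of the perturbation-independent value $\tfrac12(\la n_*,\omega\ra+k_*)$ for all $(a,b,c)$ in the ball, so the gap $\Lambda_{j_*}$ labelled by $(n_*,k_*)$ is well defined throughout the ball. Near $E_*^{(0)}$ and for $(a,b,c)$ small we are in the perturbative regime underlying \rt{thm_Schro}; a quantitative, resonant normal-form refinement of that construction --- local to the site $n_*$, in the spirit of Moser--P\"oschel/Eliasson, rather than the a.e.-in-$E$ statement of \rt{thm_Schro} --- should yield a real-analytic functional $\Phi_{n_*,k_*}\colon C^\omega_r(\T^d,\R)^3\to\R$, the gap-opening coefficient at the resonance, such that $\Lambda_{j_*}$ has positive length if and only if $\Phi_{n_*,k_*}(a,b,c)\ne0$, with leading behaviour governed by the $\pm n_*$ Fourier mode of $(a,b,c)$.

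It then remains to show that $\mathcal G:=\{(a,b,c):\Phi_{n_*,k_*}(a,b,c)\ne0\}$ is open and dense in the ball. Openness is immediate: the complement is the zero set of a continuous function (alternatively, use structural stability of uniform hyperbolicity together with continuity of the rotation number). For density it suffices that $\Phi_{n_*,k_*}$ be not identically zero, since the zero set of a nontrivial real-analytic functional on a ball in $C^\omega_r(\T^d,\R)^3$ has empty interior. Nontriviality I would extract from an explicit model computation: for the purely resonant perturbation $a(\theta)=\delta\cos\la n_*,\theta\ra$, $b=c=0$, the classical system near $E_*^{(0)}$ reduces, after the normal form, to a Mathieu-type equation whose instability tongue at $(n_*,k_*)$ has width $\asymp|\delta|\ne0$. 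Thus $\mathcal G$ is open and dense, hence generic; for every $(a,b,c)\in\mathcal G$ the interval $\Lambda_{j_*}$ has positive length, so ${\rm Leb}(\CO_{\varepsilon_0})>0$ and \rt{thm_Schro} and \ref{thm_Schro_sobolev} apply --- giving $c\,e^{\lambda st}\le\|u(t)\|_s\le C\,e^{\lambda st}$ for $E\in{\rm int}\,\Lambda_{j_*}$ and the stated polynomial bounds for $E\in\partial\Lambda_{j_*}$.

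The soft ingredients --- the classical reduction, the identification of $\CO_{\varepsilon_0}$ with the closed gaps, openness of $\mathcal G$, and the passage from analyticity to density --- are recalled from \rt{thm_Schro} or standard. The real work is twofold: first, constructing the gap-opening functional $\Phi_{n_*,k_*}$ and establishing its real-analyticity in $(a,b,c)$, which requires running the reducibility scheme quantitatively and differentiably near one non-degenerate resonance; and second --- the main obstacle --- verifying the Mathieu-type model computation that a resonant perturbation genuinely opens the gap, i.e.\ excluding an accidental cancellation $\Phi_{n_*,k_*}\equiv0$. I expect the cleanest route to the latter to be a direct width estimate for the one-mode equation, after which the general case follows by the analyticity packaging above; alternatively one may phrase the entire argument as pulling back Eliasson's genericity of Cantor spectrum for the quasi-periodic Schr\"odinger cocycle through the map $(a,b,c)\mapsto V_{a,b,c}$, at the cost of a surjectivity statement for that map.
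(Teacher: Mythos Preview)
Your outline is sound and would, if completed, yield a stronger conclusion than the paper's. But the two routes are quite different in spirit.

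The paper's argument is very short: it passes to the classical $\mathrm{sl}(2,\R)$ system exactly as you do, then simply invokes Lemma~5 of Eliasson \cite{Eli1992} and \emph{defines} ``generic'' to mean that at some resonant step of the KAM scheme the renormalised eigenvalues $\pm{\rm i}\xi_{j+1}$ become real on a sub-interval of the resonant zone---which is precisely the condition that the corresponding $\tilde\Lambda_k$ has positive length. No Baire category, no gap-opening functional, no model computation; the word ``generic'' is not shown to correspond to an open dense set but is taken in Eliasson's sense and the content is outsourced to \cite{Eli1992}.

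Your approach---fixing a resonance label, building a real-analytic functional $\Phi_{n_*,k_*}$ whose non-vanishing characterises gap-opening, and checking non-triviality on a one-mode Mathieu model---is the honest way to upgrade ``generic'' to ``open and dense'', and the Moser--P\"oschel normal form near a first resonance does give exactly the coefficient you describe. Two caveats: (i) your identification of $\CO_{\varepsilon_0}$ with the uniformly hyperbolic locus is slightly off---the boundary points of each $\Lambda_j$ are parabolic, not hyperbolic, so $\CO_{\varepsilon_0}$ is the union of the rotation-number plateaux, whose \emph{interiors} are uniformly hyperbolic; (ii) the analyticity of $\Phi_{n_*,k_*}$ in the infinite-dimensional variable $(a,b,c)$ needs care, since the gap \emph{width} itself is typically not analytic through a collapse---what is analytic is a signed quantity (essentially $\det A_{j+1}$ after the resonant normal form), and it is that object you should take as $\Phi$. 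With those adjustments your argument goes through; it buys you a genuine topological genericity statement, at the cost of the normal-form bookkeeping the paper avoids by citation.
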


\

For $\nu(E)=\sqrt{E}$, consider the equation
\begin{equation}\label{eq_Schrodinger-example}
 {\rm i}\partial_t u=\frac{\sqrt{E}}{2} H_0 u -\frac{q(\omega t)}{2\sqrt{E}}\left(x^2-x\cdot{\rm i}\partial_x-{\rm i}\partial_x\cdot x-\partial^2_x \right)u.
\end{equation}
with $q\in C_r^\omega(\T^d,\R)$. The equation is important, since as we will show later, it is closely related to quasi-periodic Schr\"odinger operator.
\begin{Theorem}\label{thm_example_schro}
For generic $q\in C^\omega(\T^d,\R)$, the conclusions of Theorem \ref{thm_Schro} and \ref{thm_Schro_sobolev} hold for Eq. (\ref{eq_Schrodinger-example}) for ${\CI}=[E_0,E_1]$ with ${\rm Leb}(\Lambda_j)>0$ for infinitely many $j$'s, where $E_0>0$ is large enough (depending on $|q|_r$) and $E_1<\infty$.
\end{Theorem}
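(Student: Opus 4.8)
The plan is to reduce Eq.~\re{eq_Schrodinger-example} to a genuine quasi-periodic Schrödinger cocycle and to invoke what is already known about the spectrum of quasi-periodic Schrödinger operators, in particular the existence of open spectral gaps for generic potentials. First I would fix the classical picture underlying Theorem~\ref{thm_Schro}: the quadratic form associated with Eq.~\re{eq_Schrodinger-example} generates a linear time-dependent Hamiltonian flow on $\R^2$ of the form $\dot z = \big(\sqrt{E}\, J H_0^{\mathrm{cl}} + \text{(perturbation)}\big) z$, and one checks directly that, with the special combination $x^2 - x\xi - \xi x - \xi^2$ chosen in the perturbation, the corresponding $\SL$-valued transfer operator is exactly (after a fixed time-independent symplectic change of variables) the quasi-periodic Schrödinger cocycle $(\alpha, A_{E}^{q})$ with $A_{E}^{q}(\theta) = \begin{pmatrix} \nu_E - q(\theta) & -1 \\ 1 & 0\end{pmatrix}$ for an appropriate affine reparametrization $\nu_E$ of $E$ (here the large lower bound $E_0$ is what makes $|q|/\sqrt{E}$ small, so Theorem~\ref{thm_Schro} applies on $\CI=[E_0,E_1]$). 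The three alternatives for the reduced operator $G$ in Theorem~\ref{thm_Schro}, namely $\varrho H_0$, the dilation $-\tfrac{\lambda i}{2}(x\partial_x+\partial_x x)$, and $-\tfrac{\kappa}{2}x^2$, correspond precisely to the cocycle being \emph{elliptic} (reducible to rotations, energy in the spectrum but dynamically stable), \emph{hyperbolic} (energy in a spectral gap, positive Lyapunov exponent), and \emph{parabolic} (energy at a gap edge), respectively; this is the classical trichotomy from Eliasson \cite{Eli1992}.

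With this dictionary in place, the statement ``${\rm Leb}(\Lambda_j)>0$ for infinitely many $j$'' translates into: the reparametrized Schrödinger operator $-u_{n+1}-u_{n-1}+q(\theta+n\alpha)u_n$ has infinitely many open spectral gaps intersecting the energy window corresponding to $[E_0,E_1]$. For a \emph{generic} analytic potential $q$ this is known — Eliasson's theorem combined with the genericity of all gaps being open (the analogue of the classical ``generically all gaps open'' phenomenon, available in the perturbative/small-potential regime via the non-degeneracy of the gap lengths as functions of the Fourier modes of $q$, and established in this setting by the Avila–Jitomirskaya–Moser–Eliasson circle of ideas). So the concrete steps are: (i) carry out the exact symplectic conjugation identifying the transfer cocycle of Eq.~\re{eq_Schrodinger-example} with the Schrödinger cocycle, tracking how $E\in[E_0,E_1]$ maps to an energy interval $[\mathsf e_0,\mathsf e_1]$ inside $\mathrm{Spec}$ closure; (ii) quote/establish that, generically in $q$ (with $|q|_r$ small so that the window contains a ``Cantor-like'' portion of spectrum with lots of gaps), infinitely many of the gaps in that window are open; (iii) on each open gap the hyperbolicity of the cocycle gives, via Theorem~\ref{thm_Schro}(2), an interval $\mathrm{int}\,\Lambda_j$ of positive measure with $G$ unitarily equivalent to a dilation, whence the exponential growth of Sobolev norms from Theorem~\ref{thm_Schro_sobolev}; and (iv) at the two gap edges one lands in the parabolic case $G\sim -\tfrac\kappa2 x^2$, giving the polynomial-growth boundary behaviour.

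The main obstacle I anticipate is step (ii): showing that genericity of $q$ forces \emph{infinitely many} open gaps in a prescribed bounded energy window, uniformly compatibly with the smallness of $|q|_r$ demanded by Theorem~\ref{thm_Schro}. The smallness of $|q|_r$ pushes us into the perturbative regime where the spectrum is a Cantor set and each gap length is, to leading order, $\sim |\hat q(k)|$ for the $k$-th gap (indexed by $k\alpha \bmod 1$); genericity must be set up as a countable intersection of open dense conditions ``$\hat q(k)\neq 0$ and the next-order correction does not cancel it'' for $k$ in an infinite set whose gaps fall into $[\mathsf e_0,\mathsf e_1]$. One has to make sure (a) infinitely many resonance labels $k$ produce gaps inside the \emph{bounded} window — this is where $E_1<\infty$ but the window still meeting infinitely many gaps needs a short argument using that gap positions accumulate — and (b) the genericity conditions are jointly attainable within the small ball $\{|q|_r<\epsilon_*\}$, which follows since each condition removes only a closed nowhere-dense set and the ball is a Baire space. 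The remaining steps are essentially bookkeeping: the exact quantum-classical correspondence for quadratic Hamiltonians (already used for Theorem~\ref{thm_Schro}) makes (i), (iii), (iv) routine once the cocycle identification and the trichotomy are in hand.
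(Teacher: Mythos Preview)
Your overall strategy --- conjugate the classical linear system to a Schr\"odinger-type system and then invoke the generic gap structure --- is exactly the paper's approach, but your execution has one genuine error and one unnecessary complication.

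The error is in step (i): you reduce to a \emph{discrete} Schr\"odinger cocycle $(\alpha,A_E^q)$ and a discrete operator $-u_{n+1}-u_{n-1}+q(\theta+n\alpha)u_n$. This is not what the geometry gives. The underlying system \re{linear_system_pr} is a continuous-time ${\rm sl}(2,\R)$-system on $\T^d$ with frequency $\omega\in\R^d$; for $d\geq 2$ there is no single shift $\alpha$ to produce a one-step cocycle, and even for $d=1$ the time-$1$ map is not a Schr\"odinger transfer matrix. The correct conjugation (carried out in the paper by the explicit constant change of variables
$\left(\begin{smallmatrix} x\\ \xi\end{smallmatrix}\right)=\frac{1}{2\sqrt{E}}\left(\begin{smallmatrix}\sqrt{E}&-1\\ \sqrt{E}&1\end{smallmatrix}\right)\left(\begin{smallmatrix}\tilde x\\ \tilde\xi\end{smallmatrix}\right)$)
sends $(\omega,A_0+F_0)$ to the \emph{continuous} Schr\"odinger system $X'=\left(\begin{smallmatrix}0&1\\-E+q(\omega t)&0\end{smallmatrix}\right)X$, i.e.\ the eigenvalue problem for ${\CL}_{\omega,q}y=-y''+q(\omega t)y$. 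The $\tilde\Lambda_k$'s are then precisely the spectral gaps of ${\CL}_{\omega,q}$ intersected with $[E_0,E_1]$, by gap labelling \cite{JM82}.

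The complication is your step (ii). Once one is looking at the continuous operator ${\CL}_{\omega,q}$, no hand-built Baire argument on Fourier coefficients is needed: Theorem~C of Eliasson \cite{Eli1992} states directly that for \emph{generic} analytic $q$ (in the $|q|_r$-topology) and $E_0$ large enough, $[E_0,\infty[\,\cap\,\Sigma_{\omega,q}$ is a Cantor set. A Cantor set meeting $[E_0,E_1]$ automatically has infinitely many complementary intervals inside $[E_0,E_1]$, so your worry about ``infinitely many gaps in a prescribed bounded window'' evaporates. Steps (iii) and (iv) are then immediate from Proposition~\ref{prop_eliasson}(3), exactly as you say.
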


\

Theorem \ref{thm_example_1} gives the example that ${\rm Leb}(\Lambda_j)>0$ for at least one $j$, while Theorem \ref{thm_example_schro} gives the  example that  ${\rm Leb}(\Lambda_j)>0$ for infinitely many $j$'s. Indeed, if the dimension of the frequency $d=2$, we could even give examples for which ${\rm Leb}(\Lambda_j)>0$ for every $j$'s.  To construct such an example, we consider
 \begin{equation}\label{eq_AMO}
  {\rm i}\partial_t u=\frac{\nu(E)}{2} H_0 u+ \left(\frac{a(E,\omega t)}{2} x^2-\frac{b(E,\omega t)}2\left(x\cdot{\rm i}\partial_x+{\rm i}\partial_x\cdot x\right)-\frac{c(E,\omega t)}2 \partial_x^2 \right) u.
 \end{equation}
where $\nu(E)=\cos^{-1}(-\frac{E}{2})$,  ${\CI}\subset[-2+\delta,2-\delta]$ with $\delta$ a small numerical constant (e.g., $\delta=10^{-6}$). Then our result is the following:

 \begin{Theorem}\label{thm_AMO} There exist a sub-interval ${\CI}\subset[-2+\delta,2-\delta]$ and $a,b,c:{\CI}\times \T^2\to\R$ with $a(E,\cdot), \, b(E,\cdot), \, c(E,\cdot)\in C^{\omega}(\T^2,\R)$ for every $E\in{\CI}$,
 such that the conclusions of Theorem \ref{thm_Schro} and \ref{thm_Schro_sobolev} hold for Eq. (\ref{eq_AMO}). Moreover,  ${\rm Leb}(\Lambda_j)> 0$ for every $j\in\N$.
 \end{Theorem}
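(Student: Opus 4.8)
The strategy is to realize Eq.~(\ref{eq_AMO}) as the Weyl quantization of a classical quasi-periodic linear system on $\R^2$ whose associated Schr\"odinger cocycle is, up to analytic conjugacy and a reparametrization of the energy, the \emph{subcritical almost Mathieu cocycle}, for which every spectral gap is open. Recall the mechanism behind Theorem~\ref{thm_Schro} (following \cite{BGMR2018, HLS86} and Eliasson \cite{Eli1992}): since the symbol of Eq.~(\ref{eq_AMO}) is quadratic in $(x,-{\rm i}\partial_x)$, the equation is conjugate, as a dynamical system, to the linear flow $\dot z=\mathcal B_E(\omega t)z$ on $\R^2$ generated by an $\mathfrak{sl}_2(\R)$-valued symbol $\mathcal B_E\in C^\omega(\T^2,\mathfrak{sl}_2(\R))$ assembled from $\nu(E),a,b,c$; reducibility of the PDE is equivalent to reducibility of the cocycle $(\omega,\mathcal B_E)$, the normal forms $\varrho H_0$, $-\tfrac{{\rm i}}2\lambda_E(x\partial_x+\partial_x x)$ and $-\tfrac{\kappa_E}2 x^2$ corresponding respectively to an elliptic, a hyperbolic, and a parabolic constant reduction. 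In this dictionary each $\Lambda_j\subset\overline{\CI}$ is the closure of a maximal energy interval on which $(\omega,\mathcal B_E)$ is uniformly hyperbolic with fibered rotation number locked at a resonant value $\tfrac12\langle n_j,\omega\rangle$, i.e.\ a spectral gap of the family, so that $\mathrm{Leb}(\Lambda_j)>0$ exactly when the $j$-th gap is open. It therefore suffices to exhibit one $\omega\in\mathrm{DC}_2(\gamma,\tau)$ and one choice of small analytic $a,b,c$ for which \emph{all} gaps of the family are open.

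Here the precise choice $\nu(E)=\cos^{-1}(-E/2)$ is what makes this possible: when $a=b=c=0$ the cocycle is the constant $\nu(E)J$, whose Poincar\'e return along the translation flow on $\T^2$ is conjugate to the free discrete Laplacian cocycle $S^0_E$, i.e.\ the $\lambda=0$ almost Mathieu cocycle. The construction is then to choose $\omega=(\omega_1,\omega_2)\in\mathrm{DC}_2(\gamma,\tau)$ with $\omega_2$ not too small and with $\alpha:=\omega_1/\omega_2\bmod 1$ Diophantine (an elementary measure-theoretic selection), fix a small almost Mathieu coupling $\lambda\in(0,1)$, and pick $a,b,c$ so that the Poincar\'e return of $\dot z=\mathcal B_E(\omega t)z$ to a transversal circle equals, up to conjugacy, the almost Mathieu cocycle $E'\mapsto S^\lambda_{E'}$ over rotation by $\alpha$, where $E'=E'(E)$ is the analytic diffeomorphism defined by $\arccos(-E'/2)=\nu(E)/\omega_2$. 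Such a suspension exists because $\theta\mapsto S^\lambda_{E'}(\theta)$ is null-homotopic in $\SL$; it is analytic in $\theta$ and $C^\omega$ in $E$; and, since $S^\lambda$ differs from $S^0$ by $O(\lambda)$, the resulting $a,b,c$ and their $E$-derivatives up to order two are $O(\lambda)$ in $|\cdot|_r$, hence $\le\varepsilon_*$ once $\lambda$ is small, so Theorems~\ref{thm_Schro} and \ref{thm_Schro_sobolev} apply.

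It remains to open every gap. By the resolution of the Ten Martini Problem (Avila--Jitomirskaya) together with its ``dry'' version for Diophantine frequency (Puig, through Aubry duality and reducibility), for our Diophantine $\alpha$ and $0<\lambda<1$ the spectrum $\Sigma_{\lambda,\alpha}$ is a Cantor set and \emph{every} gap of nonzero gap-labeling index is open. Transporting this back through the quantization/Poincar\'e dictionary, the uniformly hyperbolic energy intervals of $E\mapsto(\omega,\mathcal B_E)$ are exactly the $E'(\cdot)$-preimages of the gaps of $\Sigma_{\lambda,\alpha}$, and the resonances $\tfrac12\langle n,\omega\rangle$ lying in the bounded range of the fibered rotation number are in bijection with the almost Mathieu gap labels $k\in\Z$ (for each $k$ there is a unique $n\in\Z^2$ with first component $k$ landing in that range), so the second frequency produces no extra, possibly collapsed, gap. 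Taking $\CI$ to be a compact sub-interval of $(-2,2)$ — hence $\subset[-2+\delta,2-\delta]$ — over which the spectrum is a Cantor set, we get $\mathrm{Leb}(\Lambda_j)>0$ for every $j\in\N$, and in particular $\mathrm{Leb}(\CO_{\varepsilon_0})=\mathrm{Leb}\big(\bigcup_j\Lambda_j\big)>0$; together with Theorems~\ref{thm_Schro} and \ref{thm_Schro_sobolev} this yields the statement.

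The hardest part is keeping this correspondence faithful at the level of gaps: one must (i) carry out the suspension/return-map realization with analytic, genuinely small coefficients while retaining control of the fibered rotation number; (ii) verify that ``gap open'' is transported in both directions, which holds because it is the conjugacy- and reparametrization-invariant condition of uniform hyperbolicity; and (iii) match the two-frequency resonance bookkeeping $n\in\Z^2$ with the one-parameter list of almost Mathieu gap labels so that ``every $j$'' on the PDE side means ``every $k$'' on the operator side. The selection of $\omega$ with $\omega\in\mathrm{DC}_2(\gamma,\tau)$ and $\omega_1/\omega_2$ Diophantine is routine, but must be arranged so that this paper's Diophantine hypothesis and Puig's Diophantine hypothesis hold simultaneously.
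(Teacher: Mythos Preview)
Your approach is the same as the paper's: use the classical/quantum dictionary to reduce Eq.~(\ref{eq_AMO}) to a quasi-periodic $\mathfrak{sl}(2,\R)$ system whose Poincar\'e section is the subcritical almost Mathieu cocycle, then invoke the dry Ten Martini result to open every gap. The resonance bookkeeping you describe (each $k\in\Z$ determines a unique $n\in\Z^2$ with $\rho=\tfrac12\langle n,\omega\rangle$) matches the paper's, which notes $\rho_{(\omega,B(E)+F(E,\cdot))}=\tilde\rho_{(\alpha,S_E^\lambda)}$ and labels $\tilde\Lambda_{(-p,k)}$ accordingly.

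There is, however, a genuine gap at the suspension step. You write that ``such a suspension exists because $\theta\mapsto S^\lambda_{E'}(\theta)$ is null-homotopic in $\SL$'' and then assert that ``the resulting $a,b,c$ and their $E$-derivatives up to order two are $O(\lambda)$ in $|\cdot|_r$.'' Null-homotopy gives \emph{some} analytic suspension, but it does not give one of the specific form $\dot z=(\nu(E)J+F(E,\omega t))z$ with $\max_{m\le 2}\|\partial_E^m F\|_r$ small; without that smallness you cannot invoke Theorem~\ref{thm_Schro}. This is exactly what the paper isolates as a separate \emph{parameterized} Local Embedding Theorem (Theorem~\ref{localemb-sl}), an extension of \cite{YZ2013} which it proves in Appendix~\ref{app_proof}: given $G\in C^\omega_h(\CI\times\T^{d-1},{\rm sl}(2,\R))$ small in a $C^2$-in-$E$ norm and $\nu$ satisfying $\sup_\CI|\nu'|\cdot|\CI|<\tfrac16$, there is $F$ with $\|F\|_{h/(1+|\mu|)}\le 2c\|G\|_h$ (in the same $C^2$-in-$E$ norm) such that $(\mu,e^{2\pi\nu J}e^G)$ is the time-$1$ map of $(\omega,\nu J+F)$, $\omega=(1,\mu)$. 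The proof inverts the linearization $F\mapsto\int_0^1 e^{-2\pi\nu Js}F(s,\theta+s\mu)e^{2\pi\nu Js}\,ds$ via the implicit function theorem on a tailored Banach subspace selected by the resonance structure (Lemma~\ref{reso}, Proposition~\ref{tech}); getting the $C^2$-in-$E$ control, and in particular showing the resonant index $\tilde k(k)$ can be chosen independent of $E$, is the whole point and is not free.

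Two smaller remarks. The paper does not introduce a reparametrization $E\mapsto E'(E)$; it works directly with $\omega=(1,\alpha)$ and $\nu(E)=\cos^{-1}(-E/2)$, and restricts to a short sub-interval (e.g.\ $\CI=(-2/\sqrt{37},2/\sqrt{37})$) so that the variation hypothesis (\ref{varition}) of Theorem~\ref{localemb-sl} holds. And for the openness of all gaps it cites Avila--Jitomirskaya \cite{AvilaJito} rather than Puig.
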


\begin{Remark} One can even further get precise size of ${\rm Leb}(\Lambda_j)$ according to \cite{LYZZ}.
\end{Remark}

\

The rest of paper will be organised as follows. In Section \ref{sec_weyl}, which serves as a preliminary section, we recall the definition of Weyl quantization and some known results on the relation between classical Hamiltonian to quantum Hamiltonian.
We give an abstract theorem in Section \ref{sec_abstract} on the reducibility for quantum Hamiltonian, provided that the reducibility for the corresponding classical Hamiltonian is known.
By applying this abstract theorem, we exploit the connection between reducibility and property of Sobolev norm.
The abstract theorem is proved in Section \ref{sec_reduc}.
In Section \ref{sec_proof}, we prove the main result just by verifying the hypothesis of abstract theorem.
In Section \ref{sec_pr_examples}, the proofs of Theorem \ref{thm_example_1} -- \ref{thm_AMO} are given.

\section{Classical Hamiltonian and quantum Hamiltonian}\label{sec_weyl}

To give some preliminary knowledge,
let us recall the definition of Weyl quantization, which relates the classical and quantum mechanics, and its properties. The conclusions listed in this section can also be found in \cite{BGMR2018}.

The Weyl quantization is the operator ${\rm Op}^W:f\mapsto f^W$ for any symbol $f=f(x,\xi)$, with $x,\xi\in\R^n$, where $f^{W}$ is the Weyl operator of $f$:
$$\left(f^{W} u\right)(x)=\frac{1}{(2\pi)^n}\int_{y, \, \xi\in\R^n} e^{{\rm i}(x-y)\xi} f\left(\frac{x+y}{2},\xi\right) u(y) \, dy  \, d\xi,\qquad \forall \ u\in L^2(\R^n).$$
In particular, if $f$ is a polynomial of degree at most $2$ in $(x,\xi)$, then $f^W$ is a polynomial of degree at most $2$ in $(x,-{\rm i}\partial_x)$ after the symmetrization.

For the $1-$parameter family of Hamiltonian $\chi(t, x, \xi )$, with $t$ an external parameter, let $\phi^\tau(t,x,\xi)$ be the time $\tau-$flow it generates, precisely the
solution of
$$\frac{dx}{d\tau}=\frac{\partial\chi}{\partial\xi}(t, x, \xi ),\qquad \frac{d\xi}{d\tau}=-\frac{\partial\chi}{\partial x}(t, x, \xi).$$
The time-dependent coordinate transformation
\begin{equation}\label{time1}
(x,\xi)=\phi^1\left(t,\tilde x,{\tilde\xi}\right)=\left.\phi^{\tau}\left(t,\tilde x,{\tilde\xi}\right)\right|_{\tau=1}
\end{equation}
transforms a Hamiltonian system with
Hamiltonian $h$ into a system with Hamiltonian $g$ given by
$$g(t,\tilde x,\tilde\xi)=h(\phi^1(t,\tilde x,\tilde\xi))-\int_0^1 \frac{\partial\chi}{\partial t}(t,\phi^{\tau}(t,\tilde x,\tilde\xi) ) d\tau.$$

\begin{Lemma} [Remark 2.6 of \cite{BGMR2018}] If the Weyl operator $\chi^W(t, x, -{\rm i}\partial_x)$ is self-adjoint for any fixed $t$, then the transformation
\begin{equation}\label{tran}
\psi=e^{{\rm i}\chi^W(t, x, -{\rm i}\partial_x)}\tilde\psi
\end{equation}
transforms the equation ${\rm i}\partial_t\psi=H\psi$ into ${\rm i}\partial_t\tilde\psi=G\tilde\psi$ with
\begin{eqnarray*}
G&:=&e^{{\rm i}\chi^W(t, x, -{\rm i}\partial_x)}He^{-{\rm i}\chi^W(t, x, -{\rm i}\partial_x)}\\
& & - \, \int_0^1 e^{{\rm i}\tau\chi^W(t, x, -{\rm i}\partial_x)}\left(\partial_t \chi^W(t, x, -{\rm i}\partial_x)\right)e^{-{\rm i}\tau\chi^W(t, x, -{\rm i}\partial_x)}d\tau.
\end{eqnarray*}

\end{Lemma}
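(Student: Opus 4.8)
The plan is to prove this by a direct substitution together with the Duhamel (Volterra) formula for the time-derivative of an operator exponential; the algebra is short and the real work is the bookkeeping of unbounded operators. First I would record the facts furnished by the hypothesis: since $\chi^W(t,x,-{\rm i}\partial_x)$ is self-adjoint for each fixed $t$, Stone's theorem makes $U(t):=e^{{\rm i}\chi^W(t,x,-{\rm i}\partial_x)}$ a unitary operator on $L^2(\R^n)$ with inverse $U(t)^{-1}=U(t)^{*}=e^{-{\rm i}\chi^W(t,x,-{\rm i}\partial_x)}$, so that the change of variables $\psi=U(t)\tilde\psi$ is a bijection of $L^2$; unitarity also means it does not distort the Hilbert-space norm, and for the quadratic (hence metaplectic-type) operators $\chi^W$ relevant here it additionally preserves $\CS(\R^n)$ and every Sobolev space $\CH^s$, which is what is needed later in the paper.

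Next I would plug $\psi=U(t)\tilde\psi$ into ${\rm i}\partial_t\psi=H\psi$ and apply the Leibniz rule to get ${\rm i}(\partial_t U)\tilde\psi+{\rm i}U\,\partial_t\tilde\psi=HU\tilde\psi$; multiplying on the left by $U^{-1}$ isolates
$${\rm i}\partial_t\tilde\psi=\big(U^{-1}HU\big)\tilde\psi-{\rm i}\big(U^{-1}\partial_t U\big)\tilde\psi=:G\tilde\psi ,$$
so that $G=U^{-1}HU-{\rm i}\,U^{-1}\partial_t U$. The first summand is the conjugation of $H$ by $e^{\pm{\rm i}\chi^W}$ that appears in the statement. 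For the second I would invoke the Volterra formula
$$\partial_t e^{{\rm i}\chi^W(t)}=\int_0^1 e^{{\rm i}s\chi^W(t)}\big({\rm i}\,\partial_t\chi^W(t)\big)e^{{\rm i}(1-s)\chi^W(t)}\,ds ,$$
which one derives by differentiating $s\mapsto e^{{\rm i}s\chi^W(t+h)}e^{{\rm i}(1-s)\chi^W(t)}$ in $s$, integrating over $s\in[0,1]$, dividing by $h$, and letting $h\to0$. Multiplying on the left by $U(t)^{-1}$ and using that $e^{-{\rm i}\chi^W(t)}$ commutes with $e^{{\rm i}s\chi^W(t)}$ (both are functions of the single self-adjoint operator $\chi^W(t)$) collapses the leftmost factor to $e^{-{\rm i}(1-s)\chi^W(t)}$; the substitution $\tau=1-s$ then turns $-{\rm i}\,U^{-1}\partial_t U$ into precisely the integral correction term appearing in $G$, and adding the conjugation term finishes the computation. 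The one point to watch is to carry the sign convention relating $e^{{\rm i}\chi^W}$ to the classical Hamiltonian flow of $\chi$ consistently through these last two steps.

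The step I expect to be the main obstacle is not this algebra but its rigorous justification for unbounded operators: $\chi^W(t)$ and $\partial_t\chi^W(t)$ are at least differential operators, so I would need to exhibit a dense domain left invariant by $H$, by $\chi^W(t)$ and $\partial_t\chi^W(t)$, and by all the unitaries $e^{{\rm i}s\chi^W(t)}$, on which $t\mapsto U(t)\tilde\psi$ is genuinely $C^1$ and on which the Volterra integral converges in the relevant operator topology. For the quadratic Hamiltonians occurring in this paper one may take that domain to be $\CS(\R^n)$, or the intersection of the domains of all powers of $H_0$; with such a choice everything above is routine, exactly as in Remark 2.6 of \cite{BGMR2018}.
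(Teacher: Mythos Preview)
Your approach is correct and is the standard proof; the paper itself does not prove this lemma but merely quotes it from \cite{BGMR2018}, so there is no alternative argument to compare against.

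One remark on the sign bookkeeping you flagged at the end: with the convention $\psi=e^{{\rm i}\chi^W}\tilde\psi$ exactly as written in the statement, your computation gives $G=e^{-{\rm i}\chi^W}H e^{{\rm i}\chi^W}-\int_0^1 e^{-{\rm i}\tau\chi^W}(\partial_t\chi^W)e^{{\rm i}\tau\chi^W}\,d\tau$, i.e.\ the exponentials appear with the opposite signs from those printed in the lemma. This is a harmless transcription/convention issue in the statement (and indeed later in the paper the transformation is applied with $U=e^{-{\rm i}\chi^W}$, which restores consistency), but since you explicitly said this is ``the one point to watch'' you should state the outcome rather than leave it as $e^{\pm{\rm i}\chi^W}$.
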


\begin{Proposition} [Proposition 2.9 of \cite{BGMR2018}]\label{Prop_hami} Let $\chi(t, x, \xi )$ be a polynomial of degree at most $2$ in $(x,\xi)$ with smooth time-dependent coefficients.
If the transformation (\ref{time1}) transforms a classical system with Hamiltonian $h$ into
a system with Hamiltonian $g$, then the transformation (\ref{tran}) transforms the quantum Hamiltonian system
$h^W$ into $g^W$.
\end{Proposition}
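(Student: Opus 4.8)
The plan is to reduce the statement, via the preceding Lemma (Remark~2.6 of \cite{BGMR2018}), to a single exact Egorov-type identity, and then to prove that identity by a finite-dimensional ODE-uniqueness argument exploiting the exact truncation of the Moyal bracket for quadratic symbols. Throughout I freeze the external parameter $t$ and suppress it. First I would apply the preceding Lemma with $H=h^W$: the transformation \re{tran} conjugates ${\rm i}\partial_t\psi=h^W\psi$ to ${\rm i}\partial_t\tilde\psi=G\tilde\psi$ with
\[
G=e^{{\rm i}\chi^W}h^W e^{-{\rm i}\chi^W}-\int_0^1 e^{{\rm i}\tau\chi^W}\big(\partial_t\chi^W\big)e^{-{\rm i}\tau\chi^W}\,d\tau .
\]
On the classical side, since $\chi$ is a polynomial of degree at most $2$ its Hamiltonian vector field is affine, hence the flow $\phi^\tau$ in \re{time1} is an affine symplectic map; taking $h$ (as in the intended applications) to be itself a polynomial of degree at most $2$, the compositions $h\circ\phi^\tau$ and $(\partial_t\chi)\circ\phi^\tau$ remain polynomials of degree at most $2$, and $g=h\circ\phi^1-\int_0^1(\partial_t\chi)\circ\phi^\tau\,d\tau$. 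Since the Weyl quantization $f\mapsto f^W$ is linear and commutes with the $\tau$-integral, it would then suffice to establish, for every polynomial $p$ of degree at most $2$ in $(x,\xi)$,
\[
e^{{\rm i}\tau\chi^W}\,p^W\,e^{-{\rm i}\tau\chi^W}=(p\circ\phi^\tau)^W ,\qquad \tau\in[0,1] ;
\]
applying this with $p=h$, $\tau=1$ for the first term of $G$, and with $p=\partial_t\chi$ (then integrating over $\tau$) for the second, I would obtain $G=(h\circ\phi^1)^W-\big(\int_0^1(\partial_t\chi)\circ\phi^\tau\,d\tau\big)^W=g^W$.

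To prove this Egorov identity, I would let $V$ denote the finite-dimensional space of Weyl quantizations of polynomials of degree at most $2$; these are at-most-second-order differential operators mapping $\CS(\R^n)$ into itself. Since $\chi^W$ is self-adjoint (the standing hypothesis already used to define the transformation \re{tran}), $\{e^{{\rm i}\tau\chi^W}\}_{\tau\in\R}$ is a strongly continuous unitary group preserving $\CS(\R^n)$, and for $q^W\in V$ one may differentiate on the dense domain $\CS(\R^n)$:
\[
\frac{d}{d\tau}\Big(e^{{\rm i}\tau\chi^W}q^W\,e^{-{\rm i}\tau\chi^W}\Big)=e^{{\rm i}\tau\chi^W}\,\big({\rm i}[\chi^W,q^W]\big)\,e^{-{\rm i}\tau\chi^W}.
\]
The key point is that, $\chi$ being of degree at most $2$, the Moyal expansion of the commutator truncates exactly: ${\rm i}[\chi^W,q^W]=(X_\chi q)^W$, where $X_\chi q$ is the derivative of $q$ along the Hamiltonian flow of $\chi$ (this holds because the correction terms in the Moyal bracket involve third and higher derivatives of $\chi$), and $X_\chi q$ is again a polynomial of degree at most $2$, so the right-hand side belongs to $V$. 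Therefore $A(\tau):=e^{{\rm i}\tau\chi^W}p^W e^{-{\rm i}\tau\chi^W}$ is the solution in $V$ of the linear ODE $\dot A=\CL A$, $A(0)=p^W$, where $\CL:V\to V$ sends $q^W$ to $(X_\chi q)^W$. On the other hand, since $\phi^\tau$ is the flow of $X_\chi$ and $\chi$ is autonomous (for frozen $t$), $\tfrac{d}{d\tau}(p\circ\phi^\tau)=(X_\chi p)\circ\phi^\tau=X_\chi(p\circ\phi^\tau)$, so $B(\tau):=(p\circ\phi^\tau)^W$ solves the same ODE $\dot B=\CL B$ with the same datum $B(0)=p^W$. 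Uniqueness for a linear ODE on the finite-dimensional space $V$ gives $A\equiv B$, which is the desired identity; substituting back into the formula for $G$ then yields $G=g^W$.

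The hard part — and the reason the proposition is restricted to symbols $\chi$ of degree at most $2$ — is precisely the exact truncation of the Moyal bracket: for a general symbol $\chi$ one has only ${\rm i}[\chi^W,q^W]=(X_\chi q)^W+(\text{terms involving third and higher derivatives of }\chi)$, which would destroy both the $\CL$-invariance of $V$ and the finite-dimensional ODE argument. The remaining points are routine but would need to be checked carefully: the self-adjointness of $\chi^W$ for each frozen $t$; the fact that $\CS(\R^n)$ is a common invariant dense core for $e^{{\rm i}\tau\chi^W}$ and for every operator in $V$, which is what legitimizes the termwise differentiation above; strong continuity and differentiability in $\tau$; and the elementary remark that two at-most-second-order differential operators agreeing on $\CS(\R^n)$ coincide as operators. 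With these in place, the assembly in the first paragraph completes the proof.
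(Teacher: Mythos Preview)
The present paper does not prove this proposition; it is quoted as Proposition~2.9 of \cite{BGMR2018} in the preliminary Section~\ref{sec_weyl} and used as a black box, so there is no in-paper argument to compare against.

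Your proof is correct and is essentially the standard one (and, up to presentation, the one in \cite{BGMR2018}): reduce via the preceding Lemma to the exact Egorov identity $e^{{\rm i}\tau\chi^W}p^We^{-{\rm i}\tau\chi^W}=(p\circ\phi^\tau)^W$, and establish the latter by observing that both sides solve the same linear ODE on the finite-dimensional space of Weyl quantizations of polynomials of degree at most~$2$, the key input being that for $\chi$ of degree at most~$2$ the Moyal commutator truncates exactly, ${\rm i}[\chi^W,q^W]=\{\chi,q\}^W$. Your explicit restriction of $h$ to degree at most~$2$ is not a gap here, since every Hamiltonian to which the proposition is applied in this paper is quadratic; for general symbol classes the exact Egorov statement for quadratic $\chi$ is usually obtained via the metaplectic representation rather than the finite-dimensional ODE, but that generality is not needed.
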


Now, let us focus on the case $n=1$.

\begin{Lemma} [Lemma 2.8 of \cite{BGMR2018}]\label{lem_Sobolev}
Let $\chi(\theta,x,\xi)$ be a polynomial of degree at most $2$ in $(x,\xi)$ with real coefficients depending in a $C^\infty-$way on $\theta\in \T^d$.
For every $\theta\in \T^d$, the Weyl operator $\chi^W(\theta,x, -{\rm i}\partial_x)$ is self-adjoint in $L^2(\R)$ and $e^{-{\rm i}\tau\chi^W(\theta,x, -{\rm i}\partial_x)}$ is unitary in $L^2(\R^n)$ for every $\tau\in\R$.
Furthermore, if the coefficients of $\chi(\theta,x,\xi)$ are uniformly bounded w.r.t. $\theta\in \T^d$, then for any $s\geq 0$, there exist $c'$, $C' > 0$ depending on $\|[H_0^s,\chi^W(\theta,x, -{\rm i}\partial_x)]H_0^{-s}\|_{L^2\mapsto L^2}$ and $s$, such that
   \begin{equation}\label{change_Sobolevnorm}
      c'\|\psi\|_{s}\leq \|e^{-{\rm i}\tau\chi^W(\theta,x,-{\rm i}\partial_x)}\psi\|_{s}\leq C'\|\psi\|_{s},\qquad \tau\in [0,1], \quad \theta\in\T^d.
    \end{equation}
\end{Lemma}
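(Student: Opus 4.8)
I would prove the three assertions in turn; the self-adjointness and unitarity are soft, while the Sobolev estimate \re{change_Sobolevnorm} is an energy estimate whose only non-trivial ingredient is a commutator bound, and that is exactly where the hypothesis ``degree at most $2$'' is used.

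\emph{Self-adjointness and unitarity.} For fixed $\theta$, $\chi^W(\theta,x,-{\rm i}\partial_x)=\chi(\theta,x,-{\rm i}\partial_x)$ has real coefficients, hence is symmetric on the Schwartz space $\CS(\R)$, which will be our core throughout. To get essential self-adjointness I would apply Nelson's commutator theorem with comparison operator $H_0+1$: since $x^2\le H_0$, $-\partial_x^2\le H_0$ and $|x(-{\rm i}\partial_x)+(-{\rm i}\partial_x)x|\le H_0$ in the form sense (the linear and constant terms being $(H_0+1)$-form-bounded), one has $\pm\chi^W\le C(H_0+1)$, and $[\chi^W,H_0]$ is again a polynomial of degree $\le 2$ in $(x,-{\rm i}\partial_x)$, hence $(H_0+1)$-bounded — these are precisely the hypotheses of the commutator theorem. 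Alternatively, and more in the spirit of the paper, one invokes the exact correspondence between quadratic classical and quantum dynamics: the Hamiltonian flow $\phi^\tau$ of the affine-quadratic $\chi$ is affine symplectic, so its metaplectic lift is a strongly continuous family of unitaries on $L^2(\R)$ solving ${\rm i}\partial_\tau M(\tau)\psi=\chi^W M(\tau)\psi$, $M(0)={\rm Id}$, on $\CS(\R)$, whence $M(\tau)=e^{-{\rm i}\tau\chi^W}$ by uniqueness. In either case Stone's theorem gives that $\chi^W$ is self-adjoint and $\{e^{-{\rm i}\tau\chi^W}\}_\tau$ is a unitary group; one also records that $e^{-{\rm i}\tau\chi^W}$ preserves $\CS(\R)$ (true for metaplectic operators, or by differentiating the equation), which legitimizes the manipulations below.

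\emph{The Sobolev estimate as an operator bound plus Gr\"onwall.} Fix $\theta$ and $s\ge 0$, write $A:=\chi^W(\theta,x,-{\rm i}\partial_x)$, and set, with $H_0^{s/2}$ defined by the spectral theorem ($H_0>0$ has discrete spectrum),
$$V(\tau):=H_0^{s/2}\,e^{-{\rm i}\tau A}\,H_0^{-s/2},\qquad \tau\in[0,1].$$
Then $\|e^{-{\rm i}\tau A}\psi\|_s=\|V(\tau)H_0^{s/2}\psi\|_{L^2}$, and since $H_0^{s/2}$ maps $\CH^s$ isometrically onto $L^2(\R)$, the asserted bound \re{change_Sobolevnorm} is equivalent to $\sup_{\tau\in[0,1]}\big(\|V(\tau)\|+\|V(\tau)^{-1}\|\big)<\infty$ uniformly in $\theta$, with $V(\tau)^{-1}=H_0^{s/2}e^{{\rm i}\tau A}H_0^{-s/2}$. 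Differentiating $W(\tau):=e^{{\rm i}\tau A}V(\tau)$ on $\CS(\R)$, using $H_0^{s/2}AH_0^{-s/2}=A+[H_0^{s/2},A]H_0^{-s/2}$ and $e^{{\rm i}\tau A}A=Ae^{{\rm i}\tau A}$, the term containing $A$ cancels and one is left with
$$\frac{d}{d\tau}W(\tau)=-{\rm i}\,e^{{\rm i}\tau A}\big([H_0^{s/2},A]H_0^{-s/2}\big)V(\tau).$$
Since $e^{\pm{\rm i}\tau A}$ is unitary and $\|V(\tau)\|=\|W(\tau)\|$, Gr\"onwall's inequality gives $\|V(\tau)\|=\|W(\tau)\|\le e^{K\tau}\le e^{K}$ on $[0,1]$, with $K:=\sup_{\theta\in\T^d}\big\|[H_0^{s/2},\chi^W(\theta,x,-{\rm i}\partial_x)]H_0^{-s/2}\big\|_{L^2\mapsto L^2}$; running the argument with $-A$ bounds $\|V(\tau)^{-1}\|$ by $e^{K}$ as well, so one takes $c'=e^{-K}$, $C'=e^{K}$. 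It remains to check $K<\infty$ and that it is controlled by $\|[H_0^s,\chi^W]H_0^{-s}\|$ and $s$.

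\emph{Boundedness of the commutator — the main obstacle.} This is the only place the hypothesis ``degree at most $2$'' enters, and it is essential: the Poisson bracket of two symbols of degree $\le 2$ is again of degree $\le 2$, so at the symbol level $\{(x^2+\xi^2)^{s/2},\chi\}=\frac{s}{2}(x^2+\xi^2)^{s/2-1}\{x^2+\xi^2,\chi\}$ with $\{x^2+\xi^2,\chi\}$ quadratic; consequently $[H_0^{s/2},\chi^W]$ has the same order $s$ as $H_0^{s/2}$, i.e. $[H_0^{s/2},\chi^W]H_0^{-s/2}$ and $[H_0^s,\chi^W]H_0^{-s}$ are operators of order $0$ in the Shubin (harmonic-oscillator) pseudodifferential calculus, hence bounded on $L^2(\R)$ with norms estimated by finitely many symbol seminorms of $\chi$ — that is, by the coefficients $a(\theta),b(\theta),c(\theta)$ (uniformly bounded in $\theta$ by hypothesis) together with $s$ — and the two operator norms are comparable up to a constant depending only on $s$. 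I expect this pseudodifferential bookkeeping, carried out uniformly in $\theta\in\T^d$, to be the technical heart of the proof; for $s\in\N$ it can be replaced by an elementary induction on $s$, commuting $H_0$ past $A$ one factor at a time and using that $[H_0,A]$ is again quadratic. Everything else — the cancellation in the energy identity, Stone's theorem, and the density argument extending the bounds from $\CS(\R)$ to $\CH^s$ — is routine.
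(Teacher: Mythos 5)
The paper itself does not prove this lemma --- it is imported verbatim from Lemma 2.8 of \cite{BGMR2018} --- so there is no in-paper proof to compare against. Your argument is nonetheless correct and is, in essence, the proof in \cite{BGMR2018}: the Gr\"onwall bound on $W(\tau)=e^{{\rm i}\tau A}H_0^{s/2}e^{-{\rm i}\tau A}H_0^{-s/2}$, driven by the commutator $[H_0^{s/2},A]H_0^{-s/2}$ after the cancellation $e^{{\rm i}\tau A}A=Ae^{{\rm i}\tau A}$, is exactly the mechanism there, and your identification of the commutator bound as the technical heart --- finite because the bracket of two quadratics is again quadratic, so the conjugated commutator is a zero-order operator in the Shubin calculus (or by induction on integer $s$) --- is the correct place where ``degree $\le 2$'' is used. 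Two small remarks. First, you work with $[H_0^{s/2},\chi^W]H_0^{-s/2}$, which is the self-consistent choice given this paper's convention $\|\psi\|_s=\|H_0^{s/2}\psi\|_{L^2}$; the lemma as copied into the paper writes $[H_0^{s},\chi^W]H_0^{-s}$, inheriting the normalization $K_0=H_0$ of \cite{BGMR2018} (cf.\ Remark \ref{remark_norm_equiv}). The exponents differ, but both norms are finite for the same structural reason, so the substance is unaffected; your parenthetical claim that the two are ``comparable'' is a mild overclaim and not needed. Second, flagging the domain and density issues in the Gr\"onwall step as routine is appropriate at this level of detail; to make the ODE for $W(\tau)$ rigorous one would regularize, e.g.\ replace $H_0^{s/2}$ by $H_0^{s/2}(1+\epsilon H_0)^{-s/2}$, derive the bound uniformly in $\epsilon$ on $\CS(\R)$, and then let $\epsilon\to 0$.
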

%
%
%


\section{Reducibility and growth of Sobolev norm}\label{sec_abstract}

\subsection{An abstract theorem on reducibility}

Consider the 1-d time-dependent equation
\begin{equation}\label{eq_abs}
 {\rm i}\partial_t u=L^{W}(\omega t, x, -{\rm i} \partial_x)u,\qquad x\in\R ,
\end{equation}
where $L^{W}(\omega t, x, -{\rm i} \partial_x)$ is a linear differential operator, $\omega\in\T^d$, $d\geq 1$, and the symbol $L(\theta, x,\xi)$ is a quadratic form of $(x,\xi)$ with coefficients
analytically depending on $\theta\in\T^d$. More precisely, we assume that
\begin{equation}\label{op_L}
L(\theta, x,\xi)=\frac12 \big(a(\theta) x^2+ b(\theta) x\cdot \xi + b(\theta) \xi\cdot  x + c(\theta)  \xi^2\big),
\end{equation}
with coefficients $a,b,c\in C^\omega(\T^d,\R)$.

Through Weyl quantization, the reducibility for the time-dependent PDE can be related to the reducibility for the ${\rm sl}(2,\R)-$linear system $(\omega, \, A(\cdot))$:
$$X'=A(\omega t)X,\qquad A\in C^{\omega}(\T^d,{\rm sl}(2,\R)).$$
Given $A_1, A_2  \in C^{\omega}(\T^d,{\rm sl}(2,\R))$, if there exists $Y\in C^{\omega}(2\T^d,{\rm SL}(2,\R))$ such that
$$\frac{d}{dt}Y(\omega t)=A_1(\omega t)Y(\omega t)-Y(\omega t)A_2(\omega t),$$
 we say that $(\omega, \, A_1(\cdot))$ is conjugated to $(\omega, \, A_2(\cdot))$ by $Y$.
If $(\omega, \, A(\cdot))$ can be conjugated to $(\omega, \, B)$ with $B\in{\rm sl}(2,\R)$, we say that $(\omega, \, A(\cdot))$ is {\it reducible}.

\smallskip

Now let $A(\cdot):=\left(\begin{array}{cc}
            b(\cdot) & c(\cdot) \\[1mm]
           -a(\cdot) & -b(\cdot)
          \end{array}\right) \in C^{\omega}(\T^d,{\rm sl}(2,\R)) $ with $a,b,c$ coefficients given in (\ref{op_L}).
\begin{Theorem}\label{thm_redu}
Assume that there exist $B\in{\rm sl}(2,\R)$ and $Z_j\in C^\omega(2\T^d, {\rm sl}(2,\R))$, $j=0, \cdots,K$,
such that $(\omega, \, A(\cdot))$ is conjugated to $(\omega, \, B)$ by $\prod_{j=0}^K e^{Z_j}$. Then Eq. (\ref{eq_abs}) is reducible, i.e., there exists a time quasi-periodic map $U(\omega t)$, unitary in $L^2$ and analytic on $t$, satisfying
\begin{equation}\label{norm_U}
c'\|\psi\|_{s}\leq \|U(\omega t)\psi\|_{s}\leq C'\|\psi\|_{s},  \quad \forall \ \psi\in{\CH}^s,
\end{equation}
for constants $c', \, C'>0$ depending on $s$ and $\psi$, such that
Eq. (\ref{eq_abs}) is conjugated to
\begin{equation}\label{Ham_G}
{\rm i}\partial_t v= G v
\end{equation}
by the transformation $u=U(\omega t) v$, with $G$ an operator independent of time.

More precisely,
\begin{itemize}
\item [(\uppercase\expandafter{\romannumeral1})] $G$ is unitary equivalent to $\frac{\sqrt{{\rm det} B}}{2} H_0$ if
\begin{equation}\label{type1}
{\rm det}B>0 \  or  \ B=\left(\begin{array}{cc}
           0 & 0 \\[1mm]
           0 & 0
          \end{array}
\right).
\end{equation}

\item [(\uppercase\expandafter{\romannumeral2})]
$G$ is unitary equivalent to $-\frac{{\rm i}\sqrt{-{\rm det}B}}{2}(x\cdot \partial_x+ \partial_x \cdot x)$ if
\begin{equation}\label{type2}
{\rm det}B<0.\end{equation}

\item [(\uppercase\expandafter{\romannumeral3})]
$G$ is unitary equivalent to $-\frac{\kappa}{2} x^2$ if
\begin{equation}\label{type3}
B \ is \  similar  \  to \  \left(\begin{array}{cc}
            0 & 0\\[1mm]
            \kappa & 0
          \end{array}
\right) \  with \ \kappa\neq 0.
\end{equation}
\end{itemize}
\end{Theorem}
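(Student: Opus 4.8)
The plan is to transport the classical reducibility to the quantum side via Weyl quantization, using Proposition \ref{Prop_hami} and \rl{lem_Sobolev} as the bridge. The key observation is that the ${\rm sl}(2,\R)$-matrix $A(\cdot)$ is exactly the Hamiltonian matrix of the quadratic symbol $L(\theta,x,\xi)$ in \re{op_L}: the linear system $X'=A(\omega t)X$ is the classical Hamiltonian flow associated to the symbol, and a gauge transformation $Y\in C^\omega(2\T^d,{\rm SL}_2(\R))$ conjugating $(\omega,A(\cdot))$ to $(\omega,B)$ corresponds to a time quasi-periodic linear symplectic change of variables conjugating the classical Hamiltonian $L$ to the constant-coefficient quadratic Hamiltonian whose matrix is $B$. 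First I would make this dictionary precise: given $Y(\theta)=\prod_{j=0}^K e^{Z_j(\theta)}$ with $Z_j\in C^\omega(2\T^d,{\rm sl}(2,\R))$, each factor $e^{Z_j(\omega t)}$ is the time-$1$ flow of a quadratic Hamiltonian $\chi_j(t,x,\xi)$ (quadratic symbols with vanishing constant/linear part are in bijection with ${\rm sl}(2,\R)$ via the symplectic form), and Proposition \ref{Prop_hami} says its Weyl quantization $e^{{\rm i}\chi_j^W}$ conjugates the quantum Hamiltonian correctly. Composing the $K+1$ quantized maps gives the unitary $U(\omega t)$; \rl{lem_Sobolev} guarantees each factor is unitary on $L^2$, analytic in $t$, and satisfies the two-sided Sobolev bound \re{change_Sobolevnorm}, so the composition satisfies \re{norm_U}.

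After this reduction, Eq. \re{eq_abs} is conjugated to ${\rm i}\partial_t v = G v$ where $G = B^W$, the Weyl quantization of the quadratic symbol $\frac12(a_B x^2 + 2b_B x\xi + c_B\xi^2)$ with $B=\begin{pmatrix} b_B & c_B \\ -a_B & -b_B\end{pmatrix}$; by the remark after the definition of Weyl quantization, $G = \frac12(a_B x^2 - {\rm i}b_B(x\partial_x+\partial_x x) - c_B\partial_x^2)$. It then remains to classify $G$ up to unitary equivalence according to the sign of ${\rm det}B = a_Bc_B - b_B^2$. This is a finite-dimensional normal-form computation: any $B\in{\rm sl}(2,\R)$ is, up to conjugation by an element of ${\rm SL}_2(\R)$, in one of the standard forms — elliptic ($\cong \begin{pmatrix}0 & \mu \\ -\mu & 0\end{pmatrix}$ with $\mu=\sqrt{{\rm det}B}$ when ${\rm det}B>0$), hyperbolic ($\cong \begin{pmatrix}\mu & 0 \\ 0 & -\mu\end{pmatrix}$ with $\mu=\sqrt{-{\rm det}B}$ when ${\rm det}B<0$), parabolic ($\cong \begin{pmatrix}0 & 0 \\ \kappa & 0\end{pmatrix}$ when ${\rm det}B=0$, $B\neq 0$), or zero. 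Since conjugation of $B$ by a constant $S\in{\rm SL}_2(\R)$ corresponds, again via Proposition \ref{Prop_hami}, to a time-\emph{independent} unitary conjugation of $G$ by the metaplectic operator quantizing the linear symplectic map $S$, I only need to identify the quantization of each normal form: $\begin{pmatrix}0 & \mu \\ -\mu & 0\end{pmatrix}$ has symbol $\frac\mu2(x^2+\xi^2)$, quantizing to $\frac\mu2 H_0$, which is case (I); $\begin{pmatrix}\mu & 0\\0 & -\mu\end{pmatrix}$ has symbol $\mu x\xi$, quantizing to $-\frac{{\rm i}\mu}{2}(x\partial_x+\partial_x x)$ (symmetrized because $x\xi$ Weyl-quantizes to the symmetric ordering), which is case (II); $\begin{pmatrix}0 & 0\\ \kappa & 0\end{pmatrix}$ has symbol $-\frac\kappa2 x^2$, quantizing to $-\frac\kappa2 x^2$, which is case (III); and $B=0$ gives $G=0$, consistent with being unitary equivalent to $0\cdot H_0$ in case (I).

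The main obstacle I anticipate is not the normal-form bookkeeping but the \emph{analyticity/periodicity} issue in constructing $U$. The conjugacy $Y$ lives on the doubled torus $2\T^d$ (because the lift of an ${\rm SL}_2(\R)$-cocycle reducibility may only be $2\T^d$-periodic, not $\T^d$-periodic), so $U(\omega t)$ is quasi-periodic with the doubled frequency; one must check that this is still a legitimate time quasi-periodic unitary (it is, with frequency vector $\omega/2$ adjoined, or simply by noting $U(\omega t)$ remains bounded analytic and almost-periodic), and that the logarithm $Z_j$ of each ${\rm SL}_2(\R)$-factor is genuinely in ${\rm sl}(2,\R)$ with the same analyticity width — which is exactly why the hypothesis is phrased as a \emph{product} $\prod_{j=0}^K e^{Z_j}$ rather than a single exponential, since a general element of $C^\omega(2\T^d,{\rm SL}_2(\R))$ need not have an ${\rm sl}(2,\R)$-valued analytic logarithm. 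Granting the hypothesis in that factored form, each step is routine and the proof goes through; the remaining care is just to track that the Sobolev constants $c',C'$ in \re{norm_U} accumulate controllably over the $K+1$ factors via \re{change_Sobolevnorm}, which holds since each commutator $[H_0^s,\chi_j^W]H_0^{-s}$ is bounded (the $\chi_j$ being quadratic with bounded coefficients).
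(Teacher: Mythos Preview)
Your proposal is correct and follows essentially the same route as the paper: pass from the classical conjugacy $\prod_j e^{Z_j}$ to the quantum side via Proposition~\ref{Prop_hami} with the quadratic generators $\chi_j(\theta,x,\xi)=\tfrac12 X^\top J Z_j(\theta)X$, obtain $U(\omega t)=\prod_j e^{-{\rm i}\chi_j^W(\omega t)}$ with the Sobolev bounds from Lemma~\ref{lem_Sobolev}, and then reduce $G=g^W$ (with $g=\tfrac12 X^\top J B X$) to the three normal forms by the ${\rm sl}(2,\R)$ classification. The only cosmetic difference is that the paper writes the final constant conjugation explicitly as $B=e^{C_B}(\,\cdot\,)e^{-C_B}$ with $C_B\in{\rm sl}(2,\R)$, so that Proposition~\ref{Prop_hami} applies directly to that last step as well, whereas you invoke the metaplectic representation; both are fine here.
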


\subsection{Growth of Sobolev norm via reducibility}

As an corollary of Theorem \ref{thm_redu}, we have:

\begin{Theorem}\label{thm_sobo} Under the assumption of Theorem \ref{thm_redu}, we consider the solution $u(t)=u(t,\cdot)$ to Eq. (\ref{eq_abs}) with the non-vanishing initial condition $u(0)\in {\CH}^s$, $s\geq 0$. There exists $c, C>0$, depending on $s$ and $u(0)$, such that, for any $t\geq 0$,
\begin{itemize}
\item If (\ref{type1}) holds, then
$c \leq \|u(t)\|_{s}\leq C$.
\item If (\ref{type2}) holds, then
$c  e^{\sqrt{-{\rm det}B}st}  \leq \|u(t)\|_{s}\leq C  e^{\sqrt{-{\rm det}B}st}$.
\item If (\ref{type3}) holds, then
$
c |\kappa|^s t^s \leq \|u(t)\|_{s}\leq C   |\kappa|^s \sqrt{1+t^2}^{\frac{s}2}
$.
\end{itemize}
\end{Theorem}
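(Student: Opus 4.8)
The plan is to use the reducibility established in Theorem \ref{thm_redu} to reduce the question of Sobolev growth for Eq. (\ref{eq_abs}) to the question for the model equation ${\rm i}\partial_t v = Gv$, where $G$ is one of the three explicit time-independent operators. Write $u(t) = U(\omega t) v(t)$, where $v(t)$ solves ${\rm i}\partial_t v = Gv$ with $v(0) = U(0)^{-1}u(0)$. By the norm-equivalence (\ref{norm_U}), we have $\|u(t)\|_s \simeq \|v(t)\|_s$ with implicit constants depending only on $s$ and the initial data, so it suffices to control $\|v(t)\|_s$ from above and below. Furthermore, since $G$ is only \emph{unitarily equivalent} to the relevant model operator, say $G = \mathcal{V}^{-1} G_0 \mathcal{V}$ with $\mathcal{V}$ unitary and time-independent, one must check that this conjugating unitary $\mathcal{V}$ also preserves $\mathcal{H}^s$ up to constants; this follows because in each case $\mathcal{V}$ is built from metaplectic operators (dilations, Fourier transform, multiplication by Gaussians) whose commutators with $H_0^{s/2}$ are controlled, exactly as in \rl{lem_Sobolev}. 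After these two reductions the problem is: given $w(t) = e^{-{\rm i}tG_0}w(0)$ with $w(0)\in\mathcal H^s$ nonzero, bound $\|w(t)\|_s$.

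The three cases are then treated by direct computation with the explicit flows. In case (\uppercase\expandafter{\romannumeral1}), $G_0 = \varrho H_0$ (or $G_0=0$) commutes with $H_0$, so $e^{-{\rm i}tG_0}$ is an isometry on every $\mathcal H^s$ and $\|w(t)\|_s = \|w(0)\|_s$, giving the two-sided bound with $c=C=\|w(0)\|_s$. In case (\uppercase\expandafter{\romannumeral2}), $G_0 = -\tfrac{\rm i}{2}\mu(x\partial_x+\partial_x x)$ with $\mu = \sqrt{-\det B}>0$ generates the dilation group: $(e^{-{\rm i}tG_0}w)(x) = e^{\mu t/2} w(e^{\mu t}x)$. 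Using the characterization $\|\psi\|_s \simeq \|\psi\|_{H^s} + \|x^s\psi\|_{L^2}$ from \rrem{remark_norm_equiv}, a change of variables shows each building block $\|x^\alpha \partial^\beta w(t)\|_{L^2}$ scales like $e^{\mu(\alpha-\beta)t}\|x^\alpha\partial^\beta w(0)\|_{L^2}$; the dominant contribution as $t\to+\infty$ is $\alpha=s$, $\beta=0$, yielding $\|w(t)\|_s \simeq e^{\mu s t}\|x^s w(0)\|_{L^2} + (\text{lower order})$, hence $c e^{\mu s t}\le \|w(t)\|_s \le C e^{\mu s t}$. In case (\uppercase\expandafter{\romannumeral3}), $G_0 = -\tfrac{\kappa}{2}x^2$ is multiplication, so $(e^{-{\rm i}tG_0}w)(x) = e^{{\rm i}\kappa t x^2/2}w(x)$; here $\|x^s w(t)\|_{L^2} = \|x^s w(0)\|_{L^2}$ is constant, but $\partial^s$ brings down factors of $x$: $\partial_x(e^{{\rm i}\kappa tx^2/2}w) = e^{{\rm i}\kappa t x^2/2}({\rm i}\kappa t x\, w + w')$, and iterating, $\|w(t)\|_{H^s}$ behaves like $|\kappa t|^s\|x^s w(0)\|_{L^2}$ to leading order. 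Combining via (\ref{norm_equiv}) gives the polynomial bounds $c|\kappa|^s t^s \le \|w(t)\|_s \le C|\kappa|^s(1+t^2)^{s/2}$, where the upper bound absorbs all cross terms (including the $t$-independent $\|x^s w(0)\|_{L^2}$) into $(1+t^2)^{s/2}$.

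The main obstacle is the lower bound in cases (\uppercase\expandafter{\romannumeral2}) and (\uppercase\expandafter{\romannumeral3}): one must show that the coefficient of the leading power (in $e^{\mu t}$ or in $t$) does not vanish, i.e. that $x^s w(0)$, respectively the leading term in $\partial^s(e^{{\rm i}\kappa t x^2/2}w(0))$, is not identically zero and is not cancelled by lower-order contributions for large $t$. For (\uppercase\expandafter{\romannumeral3}) the cleanest route is to expand $\|w(t)\|_{H^s}^2$ as a polynomial in $t$ of degree $2s$ and identify the top coefficient as a positive multiple of $\|x^s w(0)\|_{L^2}^2 > 0$ (nonzero since $w(0)\neq 0$); for (\uppercase\expandafter{\romannumeral2}) one isolates the $\alpha=s$ mode after the dilation change of variables and uses that $\|x^s w(0)\|_{L^2}>0$ for any nonzero $w(0)\in\mathcal H^s$. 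In all cases, transferring these bounds back through the unitary $\mathcal V$ and the quasi-periodic unitary $U(\omega t)$ only changes $c$ and $C$ (uniformly in $t$, by (\ref{norm_U}) and the $\mathcal H^s$-boundedness of $\mathcal V^{\pm 1}$), which completes the proof.
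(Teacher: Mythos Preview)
Your proposal is correct and follows essentially the same route as the paper: reduce via $U(\omega t)$ and the time-independent conjugation furnished by Theorem~\ref{thm_redu} (both bounded on ${\CH}^s$ by (\ref{norm_U}) and Lemma~\ref{lem_Sobolev}), then compute the ${\CH}^s$-norm of each model flow explicitly --- the paper isolates cases (\ref{type2}) and (\ref{type3}) as Propositions~\ref{prop6} and~\ref{prop_para}, carrying out exactly the calculations you sketch. One slip to correct: in case~(\uppercase\expandafter{\romannumeral2}) the dilation flow is $(e^{-{\rm i}tG_0}w)(x)=e^{-\mu t/2}w(e^{-\mu t}x)$, not $e^{\mu t/2}w(e^{\mu t}x)$; with this sign your scaling law $e^{\mu(\alpha-\beta)t}$ and the identification of $\alpha=s$, $\beta=0$ as the dominant term become internally consistent (and agree with (\ref{sobolev_hyper})--(\ref{sobolev_hyper-ds})).
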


According to (\ref{norm_U}), to precise the growth of Sobolev norms for the solution to Eq. (\ref{eq_abs}), it is sufficient to study the reduced quantum Hamiltonian $G(x, -{\rm i}\partial_x)$ obtained in (\ref{Ham_G}), or more simply, the unitary equivalent forms of types (\uppercase\expandafter{\romannumeral1})$-$(\uppercase\expandafter{\romannumeral3}) listed in Theorem \ref{thm_redu}.

\smallskip

If (\ref{type1}) holds,  then $G$ is unitary equivalent to $\frac{\sqrt{{\rm det}B}}{2} H_0$. Since the ${\CH}^s-$norm of $e^{-{\rm i}t\frac{\sqrt{{\rm det}B}}{2} H_0}\psi_0$ is conserved for any $\psi_0\in{\CH}^s$, the boundedness of Sobolev norm is shown.
We focus on the cases where (\ref{type2}) and (\ref{type3}) hold, in which the growth of Sobolev norm occurs.

\begin{Proposition} \label{prop6}
For the equation
\begin{equation}\label{eq_hyper}
\partial_t v(t,x)=-\frac\lambda2 x\cdot\partial_x v (t,x)-\frac\lambda2 \partial_x(x\cdot v (t,x)), \qquad \lambda>0,
\end{equation}
 with non-vanishing initial condition $ v (0, \cdot)= v_0\in {\CH}^s$, $s\geq 0$, there exist two constants $\tilde c, \, \tilde C>0$, depending on $s$, $\lambda$ and $ v _0$, such that the solution satisfies
\begin{equation}\label{bounds_hyper}
\tilde c e^{\lambda st} \leq \|\psi(t,\cdot )\|_{s}\leq   \tilde C e^{\lambda st}, \qquad \forall \ t\geq0.
\end{equation}
\end{Proposition}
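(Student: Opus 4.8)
The plan is to solve equation \eqref{eq_hyper} explicitly via the method of characteristics, since the generator is the (symmetrized) dilation operator, whose flow on $L^2(\R)$ is the unitary group of $L^2$-normalized scalings. Concretely, writing the right-hand side of \eqref{eq_hyper} as $-\frac{\lambda}{2}(x\partial_x + \partial_x x) v = -\lambda x\partial_x v - \frac{\lambda}{2} v$, one checks that
\[
v(t,x) = e^{-\lambda t/2}\, v_0\!\left(e^{-\lambda t} x\right)
\]
solves the equation with $v(0,\cdot)=v_0$; the prefactor $e^{-\lambda t/2}$ is exactly what makes the map $v_0\mapsto v(t,\cdot)$ an isometry of $L^2(\R)$, so $\|v(t,\cdot)\|_{L^2}=\|v_0\|_{L^2}$ is conserved, consistent with the $s=0$ case of \eqref{bounds_hyper}.

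Next I would estimate $\|v(t,\cdot)\|_s$ for general $s\ge 0$ using the equivalence from Remark~\ref{remark_norm_equiv}, namely $\|\psi\|_s\simeq \|\psi\|_{H^s}+\|x^s\psi\|_{L^2}$ (and for $s\in\N$ it suffices to control $\|x^s\psi\|_{L^2}$ and $\|\psi^{(s)}\|_{L^2}$). From the scaling formula one computes directly
\[
\|x^s v(t,\cdot)\|_{L^2}^2 = e^{-\lambda t}\int_{\R} |x|^{2s}\, |v_0(e^{-\lambda t}x)|^2\, dx = e^{2\lambda s t}\,\|x^s v_0\|_{L^2}^2,
\]
by the substitution $y=e^{-\lambda t}x$, and similarly $\|\partial_x^s v(t,\cdot)\|_{L^2} = e^{-\lambda s t}\|\partial_x^s v_0\|_{L^2}$ — the derivatives \emph{decay}, while the multiplication-by-$x^s$ part \emph{grows} like $e^{\lambda s t}$. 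Hence the dominant contribution to $\|v(t,\cdot)\|_s$ for $t\ge 0$ is the $\|x^s\psi\|_{L^2}$ term, giving both bounds in \eqref{bounds_hyper} with $\tilde c, \tilde C$ depending on $s$, $\lambda$, the implied constants in Remark~\ref{remark_norm_equiv}, and on $\|v_0\|_s$ (together with the fact that $\|x^s v_0\|_{L^2}>0$ for non-vanishing $v_0\in\CH^s$, which secures the lower bound). For non-integer $s$ one can either interpolate between consecutive integers or argue directly with $H^s$ norms under scaling; the clean way is to note $\|v(t,\cdot)\|_{H^s}\lesssim_s \max\{1,e^{-\lambda s t}\}\|v_0\|_{H^s}$ and $\|x^s v(t,\cdot)\|_{L^2}=e^{\lambda s t}\|x^s v_0\|_{L^2}$, so the upper bound is $\lesssim e^{\lambda s t}\|v_0\|_s$ and the lower bound follows from the $x^s$ term alone.

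The only genuinely delicate point is the lower bound: one must know that $\|x^s v_0\|_{L^2}$ does not vanish. For $v_0\in\CH^s$ non-vanishing this is immediate when $s>0$ since $x^s v_0=0$ a.e.\ would force $v_0=0$ a.e.; for $s=0$ it is just conservation of the $L^2$ norm. Thus the main (minor) obstacle is bookkeeping of constants and handling fractional $s$ cleanly rather than any real analytic difficulty — the explicit solvability of the dilation flow makes the estimate essentially a change of variables. I would present the integer case in full and remark that the general case follows by interpolation, matching the level of detail of the surrounding results.
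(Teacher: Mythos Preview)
Your proposal is correct and follows essentially the same approach as the paper: write down the explicit dilation solution $v(t,x)=e^{-\lambda t/2}v_0(e^{-\lambda t}x)$, compute $\|x^s v(t,\cdot)\|_{L^2}=e^{\lambda st}\|x^s v_0\|_{L^2}$ and $\|\partial_x^s v(t,\cdot)\|_{L^2}=e^{-\lambda st}\|\partial_x^s v_0\|_{L^2}$ by change of variables, and conclude via the norm equivalence in Remark~\ref{remark_norm_equiv}. Your added remarks on the non-vanishing of $\|x^s v_0\|_{L^2}$ and on handling fractional $s$ by interpolation go slightly beyond what the paper spells out, but the argument is the same.
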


\begin{Remark}
This conclusion is also given in Remark 1.4 of \cite{MR2017}.
\end{Remark}

\proof Through a straightforward computation, we can verify that, for the initial condition $ v (0,\cdot)= v _0(\cdot)\in {\CH}^s$, the solution to Eq. (\ref{eq_hyper}) satisfies
$$ v (t,x)=e^{-\frac\lambda2 t}  v _0(e^{-\lambda t} x).$$
For any $s\geq 0$,
\begin{eqnarray}
\int_\R x^{2s } |  v (t,x)|^2 \, dx &=& \int_\R  x^{2s }   | v _0(e^{-\lambda t} x)|^2 \, d (e^{-\lambda t}x) \nonumber\\
&=& e^{2\lambda s t}\int_\R (e^{-\lambda t} x)^{2s}| v _0(e^{-\lambda t} x)|^2 \, d (e^{-\lambda t}x)\nonumber\\
&=& e^{2\lambda s t}\int_\R x^{2s} | v _0(x)|^2 \, dx.\label{sobolev_hyper}
\end{eqnarray}
and for $s\in\N$,
\begin{equation}\label{sobolev_hyper-ds}
\int_\R |\partial_x^s  v (t,x)|^2 \, dx = e^{-2\lambda s t} \int_\R  | v _0^{(s)}(e^{-\lambda t} x)|^2 \, d (e^{-\lambda t}x)
= e^{-2\lambda s t}\int_\R | v _0^{(s)}(x)|^2 \, dx.
\end{equation}
In view of the equivalent definition (\ref{norm_equiv}) of the ${\CH}^s-$norm given in Remark \ref{remark_norm_equiv}, we get (\ref{bounds_hyper}) by combining (\ref{sobolev_hyper}) and (\ref{sobolev_hyper-ds}).\qed

\begin{Proposition}\label{prop_para}
For the equation
\begin{equation}\label{eq_para}
{\rm i}\partial_t v (t,x)=-\frac{\kappa}{2} x^2\cdot v (t,x), \qquad \kappa\in\R,
\end{equation}
with non-vanishing initial condition $ v _0\in {\CH}^s$, $s\geq 0$, there exists constants $\tilde c, \tilde C>0$, depending on $s$, $\kappa$ and $ v _0$, such that the solution satisfies
\begin{equation}\label{sobo_para}
\tilde c |\kappa|^s |t|^s \leq \| v (t,\cdot)\|_{s}\leq \tilde C  |\kappa|^s (1+ t^2)^\frac{s}2,\qquad \forall \  t\in\R.
\end{equation}
\end{Proposition}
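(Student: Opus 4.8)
The plan is to solve Eq. (\ref{eq_para}) explicitly and then estimate the two pieces $\|x^s v(t,\cdot)\|_{L^2}$ and $\|v(t,\cdot)^{(s)}\|_{L^2}$ separately, invoking the norm equivalence (\ref{norm_equiv}) of Remark \ref{remark_norm_equiv} as in the proof of Proposition \ref{prop6}. Since the right-hand side of (\ref{eq_para}) is multiplication by the $t$-independent function $\frac{\kappa}{2}x^2$, the solution is simply
$$v(t,x)=e^{\frac{{\rm i}\kappa t}{2}x^2}\,v_0(x).$$
The modulation by a unimodular factor means $|v(t,x)|=|v_0(x)|$ pointwise, so the $L^2$ norm and, more generally, every $\|x^s v(t,\cdot)\|_{L^2}=\|x^s v_0\|_{L^2}$ is exactly conserved; this already handles the case $s=0$ and contributes only an $O(1)$ term for general $s$. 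Hence the growth must come entirely from the derivative part $\|v(t,\cdot)^{(s)}\|_{L^2}$ for $s\in\N$.

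For the derivative estimate, I would compute $\partial_x^s\big(e^{\frac{{\rm i}\kappa t}{2}x^2}v_0(x)\big)$ via the Leibniz rule. Writing $g_t(x):=e^{\frac{{\rm i}\kappa t}{2}x^2}$, one has $g_t'(x)={\rm i}\kappa t\,x\,g_t(x)$, and more generally $\partial_x^k g_t(x)=P_k(x,\kappa t)\,g_t(x)$ where $P_k$ is a polynomial whose top-degree term in $x$ is $({\rm i}\kappa t)^k x^k$ (the other terms have degree $\le k-2$ in $x$ and correspondingly lower powers of $\kappa t$). Therefore
$$v(t,x)^{(s)}=g_t(x)\sum_{k=0}^{s}\binom{s}{k}P_k(x,\kappa t)\,v_0^{(s-k)}(x),$$
and the dominant contribution as $t\to\infty$ is the $k=s$ term $({\rm i}\kappa t)^s x^s g_t(x)v_0(x)$, whose $L^2$ norm is $|\kappa|^s|t|^s\|x^s v_0\|_{L^2}$. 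All remaining terms are, after taking absolute values and using $|g_t|=1$, bounded by a fixed polynomial in $|\kappa t|$ of degree strictly less than $s$ times fixed $L^2$-type quantities $\|x^j v_0^{(i)}\|_{L^2}$ with $i+j\le s$; each such quantity is finite because $v_0\in\CH^s$ (by (\ref{norm_equiv})). Collecting these bounds gives $\|v(t,\cdot)^{(s)}\|_{L^2}\le \tilde C|\kappa|^s(1+|t|)^s$ for $t$ large, and combined with the conserved $\|x^s v_0\|_{L^2}$ term and (\ref{norm_equiv}) this yields the upper bound in (\ref{sobo_para}) (after passing from $(1+|t|)^s$ to $(1+t^2)^{s/2}$, which is comparable). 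For the lower bound I would use the reverse triangle inequality: $\|v(t,\cdot)^{(s)}\|_{L^2}\ge |\kappa|^s|t|^s\|x^s v_0\|_{L^2}-(\text{lower-order in }|t|)$, and since $v_0$ is non-vanishing, $\|x^s v_0\|_{L^2}>0$, so for $|t|$ large the first term dominates and gives $\|v(t,\cdot)^{(s)}\|_{L^2}\ge \tilde c|\kappa|^s|t|^s$; for bounded $|t|$ the inequality $\tilde c|\kappa|^s|t|^s\le \|v(t,\cdot)\|_s$ holds trivially after shrinking $\tilde c$ (using that the left side is bounded there while the right side is bounded below by a positive constant, since $\|v(t,\cdot)\|_s\ge\|v(t,\cdot)\|_{L^2}=\|v_0\|_{L^2}>0$). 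One subtlety: $\|x^s v_0\|_{L^2}$ could in principle be small relative to the other norms of $v_0$, but it is strictly positive and fixed, so it only affects the constants $\tilde c,\tilde C$, which are allowed to depend on $v_0$.

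The main obstacle, though it is really only bookkeeping, is controlling the cross terms in the Leibniz expansion: one must check that every term other than the leading $({\rm i}\kappa t)^s x^s$ one genuinely has both a lower power of $t$ \emph{and} an $x$-weight and derivative order on $v_0$ that together stay within $\CH^s$-controlled quantities, so that no term secretly grows like $|t|^s$ or faster or requires more regularity/decay of $v_0$ than assumed. This is a consequence of the structural fact that each $x$-derivative hitting $g_t$ produces exactly one factor of $\kappa t$ and raises the $x$-degree by one, while each derivative hitting $v_0$ produces no $t$ at all; a clean way to package this is to prove by induction the claimed form $\partial_x^k g_t = P_k(x,\kappa t) g_t$ with $\deg_x P_k=k$ and $P_k$ homogeneous of "weight $k$" under the scaling $x\mapsto \ell x$, $\kappa t\mapsto \ell^{-1}\kappa t$, wait—more simply, $P_k(x,s)$ is a polynomial with $P_k(x,s)=s^k x^k+\text{(terms }s^{k-j}x^{k-2j},\ j\ge1)$. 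With that normal form in hand the estimates above are immediate. Everything else is a direct imitation of the computation in the proof of Proposition \ref{prop6}.
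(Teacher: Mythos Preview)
Your proposal is correct and follows essentially the same approach as the paper: write the explicit solution $v(t,x)=e^{{\rm i}\kappa t x^2/2}v_0(x)$, observe that $\|x^s v(t,\cdot)\|_{L^2}$ is conserved, expand $\partial_x^s v$ by Leibniz to isolate the leading term $({\rm i}\kappa t)^s x^s v_0$ with remainder of order $|\kappa t|^{s-1}$, and conclude via the norm equivalence (\ref{norm_equiv}). Your treatment of the small-$|t|$ regime for the lower bound is slightly more explicit than the paper's, but the argument is otherwise the same.
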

\proof
With the initial condition $ v (0,\cdot)= v _0(\cdot)\in {\CH}^s$, the solution to Eq. (\ref{eq_para}) is
$$ v (t,x)=e^{{\rm i}\frac{\kappa}{2} x^2 t} v _0(x).$$
For any $s\geq 0$,
$$\|x^s v (t,x)\|_{L^2}=\|x^s e^{{\rm i}\frac{\kappa}{2} x^2 t} v _0(x)\|_{L^2}=\|x^s  v _0(x)\|_{L^2},$$
and for $s\in\N$,
\begin{eqnarray*}
\partial_{x}^{s}( v (t,x))
&=&\partial_{x}^{s}(e^{{\rm i}\frac{\kappa}{2} x^2 t} v _0(x))\\
&=& \sum_{\alpha=0}^{s} C_s^\alpha (e^{{\rm i}\frac{\kappa}{2} x^2 t})^{(\alpha)} v _0^{(s-\alpha)}(x)\\
&=& e^{{\rm i}\frac{\kappa}{2} x^2 t}  \sum_{\alpha=0}^{s} C_s^\alpha \left(({\rm i}\kappa t)^{\alpha} x^{\alpha}+P_{\alpha}({\rm i}\kappa t,x)\right)  v _0^{(s-\alpha)}(x) \\
&=&({\rm i} \kappa t)^{s}  x^{s}  e^{{\rm i}\frac{\kappa}{2} x^2 t}\cdot  v _0(x) +P_{s}({\rm i}\kappa t,x)e^{{\rm i}\frac{\kappa}{2} x^2 t}\cdot  v _0(x)\\
& &+ \, x^{\alpha} e^{{\rm i}\frac{\kappa}{2} x^2 t}  \sum_{\alpha=0}^{s-1} C_s^\alpha \left(({\rm i}\kappa t)^{\alpha} x^{\alpha}+P_{\alpha}({\rm i}\kappa t,x)\right)  v _0^{(s-\alpha)}(x),
\end{eqnarray*}
where, for $\alpha\geq 2$, $P_{\alpha}({\rm i}\kappa t,x)$ is a polynomial of degree $\alpha-2$ of $x$, with the coefficients being monomials of ${\rm i}\kappa t$ of degree $\leq \alpha-1$ and $P_{1}=P_0=0$. Then, there exists a constant $D>0$ such that
$$\left|\|\partial_{x}^{s}( v (t,x))\|_{L^2}-|\kappa t |^{s}\|x^{s}   v _0(x)\|_{L^2}\right|\leq D |\kappa t |^{s-1} \| v _0(x)\|_s.$$
In view of the equivalent definition (\ref{norm_equiv}) of norm in Remark \ref{remark_norm_equiv},
we get (\ref{sobo_para}). \qed

\smallskip

\noindent{\bf Proof of Theorem \ref{thm_sobo}.}
From Theorem \ref{thm_redu}, we know that
Eq. (\ref{eq_abs}) is conjugated to
${\rm i}\partial_t v= G v$ by the transformation $u=U(\omega t) v$, with $G=G(x,-{\rm i}\partial_x)$ the operator independent of $t$ given in (\ref{Ham_G_pr}).

Recall Proposition \ref{prop6} and \ref{prop_para}.
Given $s\geq 0$, for any non-vanishing $v_0\in{\CH}^s$, for the three types of unitary equivalence of $G$, there are three different behaviours of the solution to the equation ${\rm i}\partial_t v= G v$ as $t\to  \infty$.
\begin{itemize}
\item If $G$ is unitary equivalent to $\frac{\sqrt{{\rm det} B}}{2} H_0$ (under (\ref{type1})), then
$\|e^{-{\rm i}Gt}v_0\|_s=O(1)$.
\item If $G$ is unitary equivalent to $-\frac{{\rm i}\sqrt{-{\rm det} B}}{2} (x\cdot \partial_x +\partial_x\cdot x)$ (under (\ref{type2})), then
$\|e^{-{\rm i}Gt}v_0\|_s=O(e^{\sqrt{-{\rm det} B} s t}).$
\item If $G$ is unitary equivalent to $-\frac{\kappa}{2} x^2$ (under (\ref{type3})), then $\|e^{-{\rm i}Gt}v_0\|_s=O(|\kappa|^st^s)$.
\end{itemize}
Moreover, according to (\ref{norm_U}), for $s\geq 0$, there exist constants $c', C'>0$ such that
$$  c'\|v\|_{s}\leq \|U(\omega t)v\|_{s}\leq C'\|v\|_{s},\qquad  \forall \  v\in {\CH}^s.$$
Hence Theorem \ref{thm_sobo} is shown.\qed

\section{Reducibility in classical Hamiltonian system and proof of Theorem \ref{thm_redu}}\label{sec_reduc}

\subsection{Conjugation between classical hamiltonians}

Given two quadratic classical Hamiltonians
$$h_j(\omega t, x, \xi)=\frac12 \big(a_j(\omega t)x^2+ 2b_j(\omega t) x\cdot \xi+ c_j(\omega t)  \xi^2 \big), \qquad j=1,2,$$
which can be presented as
$$h_j(\omega t, x,\xi)=\frac12\left(
                             \begin{array}{c}
                               x \\
                               \xi \\
                             \end{array}
                           \right)^{\top}J A_j(\omega t)\left(
                             \begin{array}{c}
                               x \\
                               \xi \\
                             \end{array}
                           \right), \qquad j=1,2$$
with $J:=\left(\begin{array}{cc}
0 & -1 \\1 & 0 \end{array}\right)$ and
$A_j(\cdot)=\left(\begin{array}{cc}
b_j(\cdot) & c_j(\cdot) \\ -a_j(\cdot) & -b_j(\cdot) \end{array}\right)\in C^{\omega}(\T^d, {\rm sl}(2,\R))$.
The corresponding equations of motion are  given by
$$x'=\frac{\partial h_j}{\partial\xi},\quad \xi'=-\frac{\partial h_j}{\partial x},\qquad j=1,2,$$
which are the linear systems $(\omega, \, A_j)$:
$$\left(\begin{array}{c}
          x(t) \\
          \xi(t)
        \end{array}
\right)'=A_j(\omega t)\left(\begin{array}{c}
          x(t) \\
          \xi(t)
        \end{array}
\right).$$


\begin{Proposition}\label{prop_ham_cl}
If the linear system $(\omega, \, A_1(\cdot))$ is conjugated to $(\omega, \, A_2(\cdot))$ by a time quasi-periodic ${\rm SL}(2,\R)-$transformation, i.e.,
\begin{equation}\label{conj_ode}
\frac{d}{dt} e^{Z(\omega t)}=A_1(\omega t)e^{Z(\omega t)}-e^{Z(\omega t)} A_2(\omega t),\qquad Z \in C^\omega(2\T^d, {\rm sl}(2,\R)),
\end{equation}
then the classical Hamiltonian $h_1(\omega t,x,\xi)$ is conjugated to $h_2(\omega t,x,\xi)$ via the time$-1$ flow $\phi_{\chi}^1(t,x,\xi)$
 generated by the Hamiltonian
 \begin{equation}\label{chi_eZ}
 \chi(\omega t,x,\xi)=\frac12\left(
                             \begin{array}{c}
                               x \\
                               \xi \\
                             \end{array}
                           \right)^{\top}J Z(\omega t)\left(
                             \begin{array}{c}
                               x \\
                               \xi \\
                             \end{array}
                           \right).
 \end{equation}
\end{Proposition}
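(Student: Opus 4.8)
The plan is to exploit the quadratic structure directly by using the variational equation that the time-$\tau$ flow of a quadratic Hamiltonian satisfies, rather than manipulating the flows at time $1$ only. For a quadratic Hamiltonian $\chi(\theta,x,\xi)=\frac12 (x,\xi)^\top JZ(\theta)(x,\xi)$, the Hamiltonian vector field $X_\chi$ is \emph{linear}: one computes $X_\chi(x,\xi) = JZ^\top J^{-1}(\ldots)$, and after the standard identification this means the flow $\phi_\chi^\tau(\omega t,\cdot)$ is the linear map $(x,\xi)\mapsto M^\tau(\omega t)(x,\xi)$ where $M^\tau(\omega t)$ solves the matrix ODE $\frac{d}{d\tau}M^\tau = Z(\omega t)M^\tau$, $M^0=\mathrm{Id}$, so that $M^1(\omega t)=e^{Z(\omega t)}$. (Here I would first pin down the correct conjugation convention, i.e. check whether one gets $e^{Z}$ or $e^{Z^\top}$ or a $J$-twisted version; with the matrix presentation $h_j=\frac12(x,\xi)^\top JA_j(x,\xi)$ used in the paper, the bookkeeping should produce exactly $e^{Z(\omega t)}$ acting on the column vector $(x,\xi)^\top$, matching \re{conj_ode}.)

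Next I would apply the general transformation formula for time-dependent coordinate changes recalled just before Lemma~2.? in Section~\ref{sec_weyl}: the time-$1$ flow $\phi_\chi^1(t,\tilde x,\tilde\xi)$ turns a Hamiltonian $h_1$ into
$$
g(t,\tilde x,\tilde\xi) = h_1(\phi_\chi^1(t,\tilde x,\tilde\xi)) - \int_0^1 \frac{\partial\chi}{\partial t}\big(t,\phi_\chi^\tau(t,\tilde x,\tilde\xi)\big)\,d\tau.
$$
Both terms on the right are quadratic forms in $(\tilde x,\tilde\xi)$: the first because $h_1$ is quadratic and $\phi_\chi^1$ is linear, the second because $\partial_t\chi$ is quadratic and $\phi_\chi^\tau$ is linear. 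So it suffices to identify the associated matrices. Writing everything through the $JA$-presentation, the first term corresponds to $\transp{}{(e^{Z})} (JA_1) \, e^{Z}$ and the second to the matrix $\int_0^1 \transp{}{(M^\tau)} (J\dot Z) M^\tau\, d\tau$ where $\dot Z = \frac{d}{dt}Z(\omega t)$; the claim $g=h_2$ is then exactly the matrix identity
$$
\transp{}{(e^{Z})}(JA_1)e^{Z} - \int_0^1 \transp{}{(M^\tau)}(J\dot Z)M^\tau\,d\tau = JA_2.
$$

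The crux is to show this matrix identity is equivalent to the conjugation ODE \re{conj_ode}. I would prove it by differentiating: define $F(s) := \transp{}{(M^s)}(JA_1)M^s - \int_0^s \transp{}{(M^\tau)}(J\dot Z)M^\tau\,d\tau$ and note that $F(1)$ is the left-hand side above while $F(0)=JA_1$. A direct computation using $\frac{d}{d\tau}M^\tau = Z M^\tau$ gives $\frac{d}{ds}\big(\transp{}{(M^s)}(JA_1)M^s\big) = \transp{}{(M^s)}\big(\transp{}{Z}(JA_1) + (JA_1)Z\big)M^s$; meanwhile \re{conj_ode}, i.e. $\frac{d}{dt}e^{Z(\omega t)} = A_1 e^{Z} - e^{Z}A_2$, is precisely the statement that $M^1=e^Z$ intertwines $A_1$ and $A_2$ together with the cocycle/time-derivative term, and unpacking it yields an expression for $\dot Z$ in terms of $Z$, $A_1$, $A_2$. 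Substituting that into the integrand and recognizing a total derivative should collapse the integral and leave $F(1)=JA_2$. The main obstacle I expect is bookkeeping: getting all the transposes, the $J$'s, and the distinction between $Z(\omega t)$ on $2\T^d$ versus its $t$-derivative correct, and making sure the ``time'' $t$ appears only through $\omega t$ so that $\partial_t\chi = \langle\omega,\partial_\theta\rangle\chi$ is handled consistently. Once the linear-flow observation is in place and the conventions are fixed, the rest is the differentiation argument above, and no analysis (smallness, Diophantine conditions, etc.) enters — this is a purely algebraic identity about quadratic Hamiltonians and their ${\rm SL}(2,\R)$ cocycles.
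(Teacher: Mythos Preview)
Your overall strategy --- compute the time-$1$ flow of $\chi$ as the linear map $e^{Z(\omega t)}$ and then verify the Hamiltonian transformation formula at the level of matrices --- is sound, but the step ``unpacking \eqref{conj_ode} yields an expression for $\dot Z$'' hides a genuine difficulty. The conjugation relation \eqref{conj_ode} controls $\frac{d}{dt}e^{Z(\omega t)}$, not $\dot Z=\frac{d}{dt}Z(\omega t)$; since $Z$ and $\dot Z$ need not commute, these differ, and your $F(s)$-differentiation does not obviously collapse. The clean way to close your computation is to use Duhamel's identity $\frac{d}{dt}e^{Z}=\int_0^1 e^{(1-\tau)Z}\dot Z\,e^{\tau Z}\,d\tau$ together with the symplectic relation $(e^{\tau Z})^\top J\,e^{\tau Z}=J$ (valid because $Z\in{\rm sl}(2,\R)={\rm sp}(2,\R)$): these give
\[
(e^{Z})^\top J\,\tfrac{d}{dt}e^{Z}=\int_0^1 (e^{\tau Z})^\top J\dot Z\,e^{\tau Z}\,d\tau,
\]
which is exactly your integral term, and then $JA_2=(e^{Z})^\top JA_1 e^{Z}-(e^{Z})^\top J\,\frac{d}{dt}e^{Z}$ follows from \eqref{conj_ode} and the symplectic relation at $\tau=1$.

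The paper takes a shorter and genuinely different route that avoids this computation entirely. It works at the level of equations of motion: the time-dependent symplectic change of variables $X=e^{Z(\omega t)}\tilde X$ sends the linear ODE $X'=A_1(\omega t)X$ to $\tilde X'=A_2(\omega t)\tilde X$ by a one-line calculation using \eqref{conj_ode}. Since the transformed Hamiltonian produced by the formula in Section~\ref{sec_weyl} is again homogeneous quadratic (both $h_1\circ\phi_\chi^1$ and $\int_0^1\partial_t\chi\circ\phi_\chi^\tau\,d\tau$ are, $\phi_\chi^\tau$ being linear), and a homogeneous quadratic Hamiltonian is uniquely determined by its linear equations of motion, the new Hamiltonian must equal $h_2$. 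The identification $\phi_\chi^1=e^{Z(\omega t)}$ --- your first observation --- is the only remaining point, and the paper cites it from \cite{BGMR2018}. What the paper's argument buys is that $\dot Z$, the integral term, and all the matrix bookkeeping you flag never enter.
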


\begin{Remark} This is actually a consequence of the fact that in dimension $2$(one degree
of freedom), canonical transformations coincide with the transformations which preserve
the volume.
\end{Remark}

\proof Note that the equation of motion of the classical Hamiltonian $h_1$ is the linear system $(\omega, \, A_1(\cdot))$:
$$ \left(\begin{array}{c}
   x \\
    \xi
  \end{array}\right)'=A_1(\omega t)\left(\begin{array}{c}
   x \\
    \xi
  \end{array}\right).$$
In view of (\ref{conj_ode}), the transformation
\begin{equation}\label{tramSL}
  \left(\begin{array}{c}
   x \\
    \xi
  \end{array}\right)= e^{Z(\omega t)}\left(\begin{array}{c}
    \tilde x \\
   \tilde \xi
  \end{array}\right),\qquad Z\in C^{\omega}(2\T^d, {\rm sl}(2,\R)),
\end{equation}
conjugates $(\omega, \, A_1(\cdot))$ to $(\omega, \, A_2(\cdot))$. More precisely,
\begin{eqnarray*}
\left(\begin{array}{c}
\tilde x \\
\tilde \xi
\end{array}\right)'&=&e^{-Z(\omega t)}A_1(\omega t)\left(\begin{array}{c}
x \\
\xi
\end{array}\right)-e^{-Z(\omega t)}\frac{d}{dt}e^{Z(\omega t)}\left(\begin{array}{c}
\tilde x\\
\tilde \xi
\end{array}\right)  \\
&=& e^{-Z(\omega t)}A_1(\omega t)e^{Z(\omega t)}\left(\begin{array}{c}
{\tilde x} \\
{\tilde \xi}
\end{array}\right)-e^{-Z(\omega t)}\frac{d}{dt}e^{Z(\omega t)}\left(\begin{array}{c}
\tilde x\\
\tilde \xi
\end{array}\right)\\
&=&A_2(\omega t)\left(\begin{array}{c}
\tilde x\\
\tilde \xi
\end{array}\right),
\end{eqnarray*}
for which the corresponding Hamiltonian is $h_2(\omega t,\tilde x,\tilde\xi)$.
As in (3-35) of \cite{BGMR2018}, the time$-1$ map between the two Hamiltonians is generated by (\ref{chi_eZ}) since there is only quadratic terms in the Hamiltonian in our case.\qed

\subsection{Proof of Theorem \ref{thm_redu}}

We consider the classical Hamiltonian
\begin{eqnarray*}
L(\omega t,x,\xi)&=&\frac{a(\omega t)}{2}x^2+\frac{b(\omega t)}{2}(x\cdot\xi+\xi \cdot x)+\frac{c(\omega t)}{2}\xi^2 \nonumber \\
&=&\frac12X^{\top}J A(\omega t)X,\qquad X:= \left(
                             \begin{array}{c}
                               x \\
                               \xi \\
                             \end{array}
                           \right).
\end{eqnarray*}
with $a,b,c\in C^{\omega}(\T^d)$ given in Eq. (\ref{eq_abs}), and $A:=\left(\begin{array}{cc}
            b & c \\
           -a & -b
          \end{array}
\right)\in C^{\omega}(\T^d, {\rm sl}(2,\R))$.

By the hypothesis of Theorem \ref{thm_redu}, the linear system $(\omega, \, A(\cdot))$ can be reduced to the constant system $(\omega, \, B)$, with $B=\left(\begin{array}{cc}
            B_{11} & B_{12} \\
            -B_{21} & -B_{11}
          \end{array}
\right)\in{\rm sl}(2,\R)$, via finitely many transformations $(e^{Z_j})_{j=0}^K$ with $Z_j\in C^{\omega}(2\T^d, {\rm sl}(2,\R))$. Hence the reduced classical Hamiltonian is
$$g(x,\xi)=\frac12X^{\top}J B X= \frac{B_{21}}2 x^2+\frac{B_{11}}{2}(x\cdot \xi+ \xi\cdot x)+\frac{B_{12}}2 \xi^2.$$
By Proposition \ref{Prop_hami}, we see that $L^W(\omega t, x, -{\rm i}\partial_x)$ is conjugated to
\begin{equation}\label{Ham_G_pr}
G(x, -{\rm i}\partial_x):=g^W(x, -{\rm i}\partial_x)=\frac{B_{21}}2 x^2-\frac{B_{11}}{2}(x\cdot{\rm i}\partial_x+{\rm i}\partial_x\cdot x)-\frac{B_{12}}2 \partial_x^2
\end{equation}
via the product of unitary (in $L^2(\R)$) transformations
$$
U(\omega t):= \prod_{j=0}^K e^{-{\rm i}\chi^W_j(\omega t,x,-{\rm i}\partial_x)}
$$
where $\chi^W_j$ is the Weyl quantization of
$$\chi_j(\omega t,x,\xi)=\frac12X^{\top} J Z_j(\omega t)X.$$
Then (\ref{norm_U}) is deduced from (\ref{change_Sobolevnorm}) in Lemma \ref{lem_Sobolev}.
The following diagram gives a straightforward explanation for the above proof.
$$
\begin{array}{rcccl}
& X'=A(\omega t)X &\stackrel{\prod_{j=0}^K e^{Z_j(\omega t)}}{\longrightarrow} & X'=BX &  \  \  Z_j\in C^\omega(2\T^d, {\rm sl}(2,\R))  \\
  &   &  & &  \\
  &  \big\updownarrow &  &  \big\updownarrow & \\
  &  &  &  & \\
 & L(\omega t)=\frac12X^{\top} J A(\omega t)X & \stackrel{ \Phi^1_{\chi_0(\omega t)}\circ \cdots \circ  \Phi^1_{\chi_K(\omega t)}}{\longrightarrow} & g=\frac12X^{\top} J B X & \ \ \chi_j =\frac12X^{\top}J Z_j X\\
  &  &  & &  \\
   &  \big\updownarrow &  &  \big\updownarrow & \\
  &  &  & &  \\
 & {\rm i}\partial_t u=L^W(\omega t)u &  \stackrel{\prod_{j=0}^{K} e^{-{\rm i} \chi_j^W(\omega t)}}{\longrightarrow} &  {\rm i}\partial_t u = g^W u &
  \end{array}
$$

If (\ref{type1}) holds, i.e., ${\rm det}B>0$ or $B=\left(\begin{array}{cc}
            0 & 0 \\
            0 & 0
          \end{array}\right)$,
then there exists $C_B\in {\rm sl}(2,\R)$ such that
\begin{equation}\label{elliptic}
B=e^{C_B}\left(\begin{array}{cc}
            0 & \sqrt{{\rm det}B} \\
            -\sqrt{{\rm det}B} & 0
          \end{array}\right)e^{-C_B}.
\end{equation}
If (\ref{type2}) holds,  i.e., ${\rm det}B<0$, then there exists $C_B\in {\rm sl}(2,\R)$ such that
\begin{equation}\label{hyerbolic}
B=e^{C_B}\left(\begin{array}{cc}
            \sqrt{-{\rm det}B} & 0 \\
            0 & -\sqrt{-{\rm det}B}
          \end{array}\right)e^{-C_B}.
\end{equation}
If (\ref{type3}) holds,
then there exists $C_B\in {\rm sl}(2,\R)$ such that
\begin{equation}\label{parapolic}
B=e^{C_B}\left(\begin{array}{cc}
            0 & 0 \\
            \kappa & 0
          \end{array}\right)e^{-C_B}.
\end{equation}
Therefore, for Eq. (\ref{eq_abs}), the three types of unitary equivalence of $G=G(x,-{\rm i}\partial_x)$ are shown by (\ref{elliptic})$-$(\ref{parapolic}) respectively. \qed

\section{Proof of Theorem \ref{thm_Schro} and \ref{thm_Schro_sobolev}}\label{sec_proof}

In view of Theorem \ref{thm_redu}, to show the reducibility of Eq. (\ref{eq_Schrodinger}), it is sufficient to show the reducibility of the corresponding ${\rm sl}(2,\R)-$linear system.

For $E\in{\CI}$, the symbol of the quantum Hamiltonian (\ref{eq_Schrodinger}) is
$$h_E(\omega t, x,\xi)=\frac{\nu(E)}{2}(\xi^2+x^2)+W(E,\omega t,x,\xi)$$
which corresponds the quasi-periodic linear system $(\omega, \,  A_0+F_0)$
\begin{equation}\label{linear_system_pr}
\left(\begin{array}{c}
          x \\
          \xi
        \end{array}
\right)'=\left[\left(\begin{array}{cc}
             0 & \nu(E) \\
             -\nu(E) & 0
           \end{array}\right)
           +\left(\begin{array}{cc}
             b(E,\omega t) & c(E,\omega t) \\
             -a(E,\omega t) & -b(E,\omega t)
           \end{array}\right)\right] \left(\begin{array}{c}
          x \\
          \xi
        \end{array}
\right),
\end{equation}
where, for every $E\in{\CI}$,
\begin{eqnarray*}
A_0(E)&:= & \left(\begin{array}{cc}
             0 & \nu(E) \\
             -\nu(E) & 0
           \end{array}\right)\in {\rm sl}(2,\R), \\
 F_0(E,\cdot)&:=& \left(\begin{array}{cc}
             b(E,\cdot) & c(E,\cdot) \\
             -a(E,\cdot) & -b(E,\cdot)
           \end{array}\right)\in C^{\omega}_r(\T^d,{\rm sl}(2,\R))
\end{eqnarray*}
with $|\partial_E^m F_0|_r<\varepsilon_0$, $m=0,1,2$, sufficiently small.

The reducibility of linear system (\ref{linear_system_pr}) was proved by Eliasson \cite{Eli1992} (see also \cite{HA} for results about ${\rm SL}(2,\R)$-cocycles). We summarise the needed results in the following proposition. To make the paper as self-contained as possible, we give a short proof without adding too many details on known facts.
Since every quantity depends on $E$, we do not always write this dependence explicitly in the statement of proposition.

Before stating the precise result, we introduce the concept of rotation number. The {\it rotation number}
of quasi-periodic ${\rm sl}(2,\R)-$linear system (\ref{linear_system_pr})
 is defined as
$$\rho(E)=\rho(\omega, \, A_0(E)+F(E,\omega t))=\lim_{t\to\infty}\frac{\arg(\Phi_E^t X)}{t},\quad \forall \ X\in \mathbb{R}^2\setminus\{0\},$$
where $\Phi_E^t$ is the basic
matrix solution and $\arg$ denotes the angle. The rotation number
$\rho$ is well-defined and it does not depend on $X$
\cite{JM82}.

\begin{Proposition}\label{prop_eliasson} There exists $\varepsilon_*=\varepsilon_*(r,\gamma,\tau,d,l_1,l_2)>0$ such that if
\begin{equation}\label{small_F_0}
\max_{m=0,1,2}|\partial_E^m F_0|_r=:\varepsilon_0<\varepsilon_*,
\end{equation}
then the following holds for the quasi-periodic linear system $(\omega, \, A_0+F_0)$.
\begin{enumerate}
  \item [(1)] For a.e. $E\in{\CI}$, $(\omega, \, A_0+F_0(\cdot))$ is reducible. More precisely,
 there exist $B\in{\rm sl}(2,\R)$ and  $Z_j\in C^\omega(2\T^d, {\rm sl}(2,\R))$, $j=0,1,\cdots,K$, such that
 \begin{equation}\label{reducibility_sl2R}
   \frac{d}{dt}\left(\prod_{j=0}^K e^{Z_j(\omega t)}\right)=\left(A_0+F_0(\omega t)\right)\left(\prod_{j=0}^K e^{Z_j(\omega t)}\right)-\left(\prod_{j=0}^K e^{Z_j(\omega t)}\right)B.
 \end{equation}
  \item [(2)] The rotation number $\rho=\rho(E)$ is monotonic on $\CI$. For any $k\in\Z^d$,
  $$\tilde\Lambda_k:=\left\{E\in\overline{\CI}:\rho(E)=\frac{\la k,\omega\ra}{2}\right\}  \   \footnote{$\tilde\Lambda_k$ can be empty for some $k\in\Z^d$ if the closed interval $\rho^{-1}\left(\frac{\la k,\omega\ra}{2}\right)$ does not intersect ${\CI}$.} $$ is a closed interval, and we have
\begin{equation}\label{measure_esti}
\sum_{k\in\Z^d}{\rm Leb}(\tilde\Lambda_k)<\varepsilon_0^{\frac{1}{40}}.
\end{equation}
  \item [(3)] For every $E\in \tilde\Lambda_k=:[a_k,b_k]$, $(\omega, \, A_0+F_0(\cdot))$ is reducible and the matrix $B\in {\rm sl}(2,\R)$ in (\ref{reducibility_sl2R}) satisfies
 \begin{itemize}
   \item if $a_k=b_k$, then $B=\left(\begin{array}{cc}
                                       0 & 0 \\
                                       0 & 0
                                     \end{array}\right)$;
   \item if $a_k<b_k$, then
\begin{itemize}
  \item ${\rm det} B<0$ for $E\in(a_k,b_k)$,
  \item ${\rm det} B=0$ for $E=a_k,b_k$ and $E\not\in\partial {\CI}$.
\end{itemize}
 \end{itemize}
   \item [(4)] For a.e. $E\in{\CI}\setminus\bigcup_k \tilde\Lambda_k$, $(\omega, \, A_0+F_0(\cdot))$ is reducible and the matrix $B\in {\rm sl}(2,\R)$ in (\ref{reducibility_sl2R}) satisfies ${\rm det} B>0$.
\end{enumerate}
\end{Proposition}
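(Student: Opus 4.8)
The plan is to read Proposition~\ref{prop_eliasson} as a restatement, in the form required by Theorem~\ref{thm_redu}, of Eliasson's reducibility theory \cite{Eli1992} for one-parameter analytic families of quasi-periodic ${\rm sl}(2,\R)$ systems, combined with the rotation-number formalism of Johnson--Moser \cite{JM82}. The starting point is the elementary observation that for each $E\in{\CI}$ the system $(\omega,\,A_0(E)+F_0(E,\cdot))$ is a perturbation of the constant elliptic generator $A_0(E)=\nu(E)\bigl(\begin{smallmatrix}0&1\\-1&0\end{smallmatrix}\bigr)$ by the analytic term $F_0(E,\cdot)$ of size $<\varepsilon_0$ on $\{|\Im\theta|<r\}$. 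Projectivizing the flow to $\T^d\times\mathbb{S}^1$, writing the angular equation $\dot\phi=f_E(\omega t,\phi)$, and setting $\rho(E)$ to be the asymptotic average of $\dot\phi$, one gets from \cite{JM82} that $\rho(E)$ is well defined, independent of the initial condition, and continuous in $E$. The structural point is that $A_0(E)$ contributes the constant angular velocity $\nu(E)$ while $\partial_E F_0$ has $C^0$-norm $<\varepsilon_0$, so once $\varepsilon_*<l_1$ the quantity $\partial_E f_E$ is bounded away from zero with sign independent of $(\theta,\phi)$ and $E$; the comparison principle for rotation numbers then gives that $\rho$ is strictly monotone on ${\CI}$, whence each level set $\tilde\Lambda_k=\rho^{-1}\!\big(\tfrac{\langle k,\omega\rangle}{2}\big)\cap\overline{\CI}$ is a (possibly empty, possibly degenerate) closed interval. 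This proves the monotonicity and closed-interval assertions of item (2).

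The technical core, which I would import from \cite{Eli1992}, is the reduction scheme: one runs Eliasson's inductive KAM iteration, at each stage conjugating by $e^{Z_j(\omega t)}$ with $Z_j\in C^\omega(2\T^d,{\rm sl}(2,\R))$ — the double cover being forced by half-integer resonances — to remove the non-resonant part of the current perturbation up to a bounded resonant normal-form correction. The iteration terminates for every $E$ at which $2\rho(E)$ is Diophantine with respect to $\omega$, and the set of $E\in{\CI}\setminus\bigcup_k\tilde\Lambda_k$ for which $2\rho(E)$ is instead $\omega$-Liouville is shown, within the scheme, to have zero Lebesgue measure — the non-degeneracy $|\nu'|\geq l_1$, $|\nu''|\leq l_2$ providing the transversality in $E$ needed to control, at each step, the measure of the excluded parameters. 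Off this null set and off $\bigcup_k\tilde\Lambda_k$ the limiting constant matrix $B$ is elliptic, $\det B>0$, which is item (4). On a level set $\tilde\Lambda_k=[a_k,b_k]$ the resonance is exact, the iteration terminates after finitely many steps, and Eliasson's analysis of the terminal resonant normal form yields the dichotomy of item (3): if $a_k=b_k$ one may take $B=0$; if $a_k<b_k$ the interior $(a_k,b_k)$ lies in the uniformly hyperbolic locus (this is part of Eliasson's description of the resonant intervals), so $\det B<0$ there, while an endpoint in $\partial\tilde\Lambda_k\setminus\partial{\CI}$ sits on the boundary of that locus and the terminal normal form there has $\det B=0$. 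Collecting the finitely many $e^{Z_j}$ used before termination produces \re{reducibility_sl2R}, hence items (1) and (3), with the conclusion on each $\tilde\Lambda_k$ holding for \emph{every} $E$ and not merely for a.e.\ $E$.

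For the measure bound \re{measure_esti} one quantifies the preceding step: the length of $\tilde\Lambda_k$ is controlled by the strength of the obstruction at the $k$-th resonance, bounded for an analytic perturbation of width $r$ and size $\varepsilon_0$ by $C\,\varepsilon_0^{1/2}e^{-r|k|/2}$ (any mild degradation of this exponent through the iteration being absorbed into the crude target $\tfrac{1}{40}$); summing the geometric series over $k\in\Z^d$ gives $\sum_k{\rm Leb}(\tilde\Lambda_k)\leq C'\varepsilon_0^{1/2}<\varepsilon_0^{1/40}$ as soon as $\varepsilon_*$ is small, which is the remaining assertion of item (2).

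The main obstacle is the second step. Reproving from scratch that Eliasson's scheme converges, and especially that at an exact resonance it terminates with a genuinely \emph{constant} matrix whose conjugacy class — elliptic, hyperbolic, or the boundary case $\det B=0$ — is pinned down by the rotation number and the uniformly hyperbolic locus, would reproduce the bulk of \cite{Eli1992}. In the write-up I would therefore invoke this as cited input and confine the actual work to checking that our family $(\omega,\,A_0(E)+F_0(E,\cdot))$ meets its hypotheses — $\omega\in{\rm DC}_d(\gamma,\tau)$, analyticity width $r$, smallness $\varepsilon_0\leq\varepsilon_*(\gamma,\tau,r,d,l_1,l_2)$, and the non-degeneracy $|\nu'|\geq l_1$, $|\nu''|\leq l_2$ that makes the $\omega$-Liouville set of $E$ null with uniform constants — together with the elementary bookkeeping behind \re{measure_esti}.
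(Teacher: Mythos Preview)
Your outline follows the same strategy as the paper --- invoke Eliasson \cite{Eli1992} for the KAM scheme and the classification of the reduced matrix, and Johnson--Moser \cite{JM82} for the rotation number --- and items (3), (4) are dispatched the same way (the paper simply cites Lemma~5 of \cite{Eli1992}). Your monotonicity argument for $\rho$ is in fact more explicit than what the paper writes. Two points, however, need correction.

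First, the phrase ``the iteration terminates'' is misleading and hides a genuine step. Eliasson's scheme is an \emph{infinite} KAM iteration $(\omega,A_j+F_j)\to(\omega,A_{j+1}+F_{j+1})$ with $|F_j|\to 0$; what is finite (when $\rho(E)$ is Diophantine or rational w.r.t.\ $\omega$) is the number of \emph{resonant} steps, i.e.\ those involving a rotation $e^{\frac{\langle n_j,\cdot\rangle}{2\xi_j}A_j}$. After the last resonant step $J_*$ there are still infinitely many near-identity conjugations $Y_j=e^{\tilde Z_j}$ with $|\tilde Z_j|<\varepsilon_j^{2/3}$, and to obtain the \emph{finite} product $\prod_{j=0}^K e^{Z_j}$ demanded by \re{reducibility_sl2R} one must argue that the infinite tail $\prod_{j\geq J_*}e^{\tilde Z_j}$ can itself be written as a single $e^S$ with $S\in C^\omega(2\T^d,{\rm sl}(2,\R))$. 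The paper does this by appealing to Lemma~3.5 of \cite{BGMR2018}; you should not describe this as ``termination''.

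Second, your gap estimate ${\rm Leb}(\tilde\Lambda_k)\leq C\varepsilon_0^{1/2}e^{-r|k|/2}$ is the naive first-order bound and is stronger than what the paper's (and Eliasson's) scheme actually delivers. The point is that $\tilde\Lambda_k$ may be created at a late resonant step $j_L$ where the analyticity width has shrunk to $r_{j_L}-r_{j_L+1}\sim 2^{-j_L}r$, and $k=n_{j_1}+\cdots+n_{j_L}$ is a sum of resonance vectors from several steps. The paper tracks this carefully: one shows $|k|\asymp|n_{j_L}|$ and obtains only a \emph{stretched}-exponential bound ${\rm Leb}(\tilde\Lambda_k)<2\varepsilon_0^{\sigma}e^{-|k|^{\varsigma}}$ with $\sigma=\frac{1}{33}$ and $\varsigma=\frac{\ln(1+\sigma)}{\ln(8+8\sigma)}$. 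This still sums to give \re{measure_esti}, but your parenthetical ``any mild degradation of this exponent'' understates what happens --- the decay is qualitatively weaker than exponential, and the bookkeeping is the actual content of the proof of (2).
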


\proof Since $\nu$ is a strictly monotonic real-valued function of $E\in{\CI}$ and $|\nu'|\geq l_1$, $|\nu''|\leq l_2$, (\ref{small_F_0}) implies that
$|\partial^m_E F_0(\nu^{-1}(E),\cdot)|_r$, $m=0,1,2$, is also small enough.
 Hence, to prove the above arguments, we can simply consider the case where $\nu(E)=E\in {\CI}= \R$ and then obtain Proposition \ref{prop_eliasson} by replacing $E$ by $\nu(E)$.

\smallskip

\noindent {\it Proof of (1).} The reducibility has already been shown by Eliasson \cite{Eli1992} for a.e. $E\in\R$.
Indeed, if $\max_{m=0,1,2}|\partial_E^m F_0|_r$ is small enough (depending on $r,\gamma,\tau,d$), then there exists sequences $(Y_j)_{j\in\N}\subset C^\omega(2\T^d, {\rm SL}(2,\R))$, $(A_j)_{j\in\N}\subset {\rm sl}(2,\R)$, and $(F_j)_{j\in\N}\subset C^\omega(2\T^d, {\rm sl}(2,\R))$, all of which are piecewise $C^2$ w.r.t. $E$,
 with $\max_{m=0,1,2}|\partial^m_E F_j|_{\T^d}<\varepsilon_j:=\varepsilon_0^{(1+\sigma)^j}$ for $\sigma=\frac1{33}$, such that
$$\frac{d}{dt}Y_{j}(\omega t)=\left(A_j+F_j(\omega t)\right)Y_{j}(\omega t)-Y_{j}(\omega t) \left(A_{j+1}+F_{j+1}(\omega t)\right).$$
More precisely, at the $j-$th step, for $\pm{\rm i}\xi_j\in\R\cup{\rm i}\R$, the two eigenvalues of $A_j$, and
$$N_j:=\frac{2\sigma}{r_j-r_{j+1}}\ln\left(\frac{1}{\varepsilon_j}\right)$$
with $(r_j)_{j\in\N}$ a decreasing sequence of positive numbers such that $r_j-r_{j+1}\geq 2^{-(j+1)}r$ for each $j$,
\begin{itemize}
  \item (non-resonant case) if for every $n\in\Z^d$ with $0<|n|\leq N_j$, we have
  \begin{equation}\label{non_resonant}
  \left|2\xi_j-\la n,\omega\ra\right|\geq \varepsilon_j^{\sigma},
  \end{equation}
  then $Y_{j}=e^{\tilde Z_{j}}$ for some $\tilde Z_{j}\in C^{\omega}(2\T^d,{\rm sl}(2,\R))$ with $|\tilde Z_{j}|_{2\T^d}<\varepsilon_j^{\frac23}$, and $|A_{j+1}-A_j|<\varepsilon_j^{\frac23}$;
  \item (resonant) if for some $n_j\in\Z^d$ with $0<|n_j|\leq N_j$, we have
  \begin{equation}\label{resonant}
  \left|2\xi_j-\la n_j,\omega\ra\right|< \varepsilon_j^{\sigma},
  \end{equation}
  then $Y_{j+1}(\cdot)=e^{\frac{\la n_j ,\cdot\ra}{2\xi_j}A_j} e^{\tilde Z_{j+1}}$ for some $\tilde Z_{j}\in C^{\omega}(2\T^d,{\rm sl}(2,\R))$ with $|\tilde Z_{j}|_{2\T^d}<\varepsilon_j^{\frac23}$ and $|A_{j+1}|<\varepsilon_j^{\frac{\sigma}2}$.
\end{itemize}
As $j$ goes to $\infty$, the time-dependent part $F_{j}$ tends to vanish.
For the detailed proof, we can refer to Lemma 2 of \cite{Eli1992} and its proof.

In view of Lemma 3 b) of \cite{Eli1992}, if the rotation number $\rho(E)$ of $(\omega, \, A_0(E)+F_0)$ is Diophantine or rational w.r.t. $\omega$, which corresponds to a.e. $E\in\R$, then the resonant case occurs for only finitely many times.
Therefore, for a.e. $E\in\R$, there exists a large enough $J_*\in\N^*$, depending on $E$, such that
\begin{equation}\label{J_large}
Y_{j}=e^{\tilde Z_{j}} \  \ {\rm with} \   \  |\tilde Z_{j}|_{2\T^d}<\varepsilon^{\frac23}_{j},\qquad \forall \ j\geq J_*.
\end{equation}
This implies that $\prod_{j=0}^\infty|Y_j|_{2\T^d}$ is convergent.
As explained in the proof of Lemma 3.5 of \cite{BGMR2018}, (\ref{J_large}) also implies that there exists $S\in C^{\omega}(2\T^d,{\rm sl}(2,\R))$ such that
$\prod_{j=J_*}^\infty Y_{j}=e^S$,
since $\varepsilon_0$ is sufficiently small.
Hence (\ref{reducibility_sl2R}) is shown, i.e., the reducibility is realized via finitely many transformations of the form $e^{Z_j(\omega t)}$ with $Z_j\in C^{\omega}(2\T^d,{\rm sl}(2,\R))$.


\

\noindent {\it Proof of (2).}
For $k\in\Z^d$, $\tilde\Lambda_k$ is obtained after several resonant KAM-steps, saying $j_1$, $\cdots$, $j_L$, where $n_{j_i}\in\Z^d$ with $0<|n_{j_i}|\leq N_{j_i}$, $i=1,\cdots,L$, satisfies
$$ \left|2\xi_{j_i}-\la n_{j_i},\omega\ra\right|< \varepsilon_{j_i}^{\sigma},$$
and $k=n_{j_1}+\cdots + n_{j_L}$. We will show that
\begin{equation}\label{k_n_j_L}
\frac{10|n_{j_L}|}{11}\leq |k|\leq  \frac{12|n_{j_L}|}{11}.
\end{equation}
 Assume that $L\geq 2$ (otherwise we have already $k=n_{j_L}$). After the $(j_{i-1}+1)-$th step, $i=2,\cdots,L$, the eigenvalues $\pm{\rm i}\xi_{j_{i-1}+1}$ satisfies $|\xi_{j_{i-1}+1}|<2\varepsilon_{j_{i-1}}^{\frac\sigma2}$. On the other hand, before the $(j_{i}+1)-$th step, the resonant condition (\ref{resonant}) implies that the eigenvalues $\pm{\rm i}\xi_{j_{L}}$ satisfy that
$$|2\xi_{j_{i}}-\la n_{j_i},\omega\ra|\leq \varepsilon_{j_{i}}^\sigma.$$
Since the steps between these two successive resonant steps are all non-resonant, and $\omega\in {\rm DC}_{d}(\gamma,\tau)$,
we have that
$$\frac{\gamma}{|n_{j_i}|^\tau}\leq|\la n_{j_i},\omega\ra|\leq 2|\xi_{j_{i-1}+1}|+2\varepsilon^{\frac13}_{j_{i-1}+1}+\varepsilon_{j_{i}}^\sigma<3\varepsilon_{j_{i-1}}^{\frac\sigma2},$$
which implies that
$$|n_{j_i}|>\left(\frac\gamma3\right)^{\frac{1}{\tau}}\varepsilon^{-\frac{\sigma}{2\tau}}_{j_{i-1}}>12|N_{j_{i-1}}|\geq 12|n_{j_{i-1}}|.$$
Hence, we get (\ref{k_n_j_L}).

$\tilde\Lambda_k$ is firstly formed at the $j_L-$th step, with the initial measure smaller than $\varepsilon_{j_L}^{2\sigma}$.
Since all the succedent steps are non-resonant, the measure of $\tilde\Lambda_k$ varies up to $\varepsilon_{j_L}^{2\sigma}$. Then, for $\varsigma:=\frac{\ln(1+\sigma)}{\ln(8+8\sigma)}$, we have
$$
{\rm Leb}(\tilde\Lambda_k)< 2\varepsilon_{j_L}^{2\sigma}< 2\varepsilon_0^{\sigma} e^{-\left(\frac{12}{11}\right)^\varsigma N_{j_L}^\varsigma}\leq 2\varepsilon_0^{\sigma} e^{-\left(\frac{12}{11}\right)^\varsigma |n_{j_L}|^\varsigma}.
$$
Indeed, recalling that $r_j-r_{j+1}\geq 2^{-(j+1)}r$ for every $j$, we have
\begin{eqnarray*}
\varepsilon_{j_L} &=& \exp\{-|\ln\varepsilon_0|(1+\sigma)^{j_L}\} \\
 &=& \exp\left\{- \frac{|\ln\varepsilon_0|^{1-\varsigma}(1+\sigma)^{j_L(1- \varsigma)}(r_{j_L}-r_{j_L+1})^{\varsigma}}{(2\sigma)^{\varsigma}} N_{j_L}^\varsigma\right\}\\
 &\leq&\exp\left\{- \frac{|\ln\varepsilon_0|^{1-\varsigma} r^\varsigma}{(4\sigma)^{\varsigma}} \left(\frac{(1+\sigma)^{1- \varsigma}}{2^\varsigma}\right)^{j_L} N_{j_L}^\varsigma\right\}\\
 &<&\exp\left\{-\left(\frac{12}{11}\right)^\varsigma \frac{N_{j_L}^\varsigma}{\sigma}\right\},
\end{eqnarray*}
since $\varepsilon_0$ is small enough and
$$\frac{(1+\sigma)^{1- \varsigma}}{2^\varsigma}=\exp\left\{\frac{\ln(1+\sigma)}{\ln(8+8\sigma)}\left(\ln 8-\ln 2\right)\right\}>1.$$
Therefore, by (\ref{k_n_j_L}), we get
${\rm Leb}(\tilde\Lambda_k)<2\varepsilon_0^{\sigma} e^{- |k|^\varsigma}$,
which implies (\ref{measure_esti}). For detailed proof of the measure estimate of $\tilde\Lambda_k$, we can also refer to Corollary 1 of \cite{HA}.

\

\noindent {\it Proof of (3) and (4).} It can be deduced from Lemma 5 of \cite{Eli1992}. \qed

\

\noindent{\bf Proof of Theorem \ref{thm_Schro} and \ref{thm_Schro_sobolev}.} Theorem \ref{thm_Schro_sobolev} can be seen as a corollary of Theorem \ref{thm_sobo}.
According to Theorem \ref{thm_redu}, the reducibility of Eq. (\ref{eq_Schrodinger}) for a.e. $E\in{\CI}$ is deduced from Proposition \ref{prop_eliasson}-(1).
Let $\{\Lambda_j\}_{j\in\N}$ be the intervals $\tilde\Lambda_k$'s intersecting ${\CI}$ and let
$${\CO}_{\varepsilon_0}:=\bigcup_{j\in\N}\Lambda_j=\bigcup_{k\in\Z^d}\tilde\Lambda_k.$$
Proposition \ref{prop_eliasson}-(2) gives the measure estimate of ${\CO}_{\varepsilon_0}$.
The unitary equivalences of the reduced quantum Hamiltonian follow from Proposition \ref{prop_eliasson}-(3) and (4). Hence Theorem \ref{thm_Schro} is shown. \qed

\section{Proof of Theorem \ref{thm_example_1} -- \ref{thm_AMO}}\label{sec_pr_examples}

In this section, we show that the measure of the subset ${\CO}_{\varepsilon_0}$ is positive for the equations (\ref{example_1}) -- (\ref{eq_AMO}),
which implies the growths of Sobolev norm.

\subsection{Proof of Theorem \ref{thm_example_1}}

For Eq. (\ref{example_1}), $E\in\R$, the corresponding linear system is
$$
\left(\begin{array}{c}
          x \\
          \xi
        \end{array}
\right)'=\left[\left(\begin{array}{cc}
             0 & E \\
             -E & 0
           \end{array}\right)
           +\left(\begin{array}{cc}
             b(\omega t) & c(\omega t) \\
             -a(\omega t) & -b(\omega t)
           \end{array}\right)\right] \left(\begin{array}{c}
          x \\
          \xi
        \end{array}
\right).$$
In view of Lemma 5 of \cite{Eli1992}, for ``generic" $a,b,c\in C^\omega(\T^d,\R)$, there is at least one non-degenerate $\tilde\Lambda_k$, $k\in\Z^d$.
More precisely, at the resonant step of KAM scheme described in the proof of Proposition \ref{prop_eliasson}-(1),
the condition (\ref{resonant}) defines a resonant interval of $E$, on which the two eigenvalues $\pm{\rm i}\xi_j$ of $A_j$ are purely imaginary since $\xi_j$ is bounded frow below. After this resonant step, the two new eigenvalues $\pm{\rm i}\xi_{j+1}$ of $A_{j+1}$ can be real or still purely imaginary for $E$ in this resonant interval, since $|\xi_{j+1}|$ is close to zero.
We say that $a,b,c\in C^\omega(\T^d,\R)$ are {\it generic} if, for at least one resonant step in the KAM scheme, the two new eigenvalues $\pm{\rm i}\xi_{j+1}$ become real on a sub-interval of the resonant interval.

\subsection{Proof of Theorem \ref{thm_example_schro}}

For Eq. (\ref{eq_Schrodinger-example}) with $E\in{\CI}=[E_0,E_1]$ with $E_0>0$ large enough, and $E_1<\infty$, Theorem \ref{thm_Schro} and \ref{thm_Schro_sobolev} hold.
The corresponding linear system $(\omega, \, A_0+F_0)$ of Eq. (\ref{eq_Schrodinger-example}) is
$$
\left(\begin{array}{c}
          x \\
          \xi
        \end{array}
\right)'=\left[\left(\begin{array}{cc}
             0 & \sqrt{E} \\
             -\sqrt{E} & 0
           \end{array}\right)
           +\frac{q(\omega t)}{2\sqrt{E}}\left(\begin{array}{cc}
             -1 & -1 \\
             1 & 1
           \end{array}\right)\right] \left(\begin{array}{c}
          x \\
          \xi
        \end{array}
\right).$$
Then, through the change of variables
$$\left(\begin{array}{c}
          x \\
          \xi
        \end{array}\right)=\frac{1}{2\sqrt{E}}\left(\begin{array}{cc}
           \sqrt{E} & -1 \\
           \sqrt{E} & 1
           \end{array}\right)\left(\begin{array}{c}
          \tilde x \\
          \tilde\xi
        \end{array}\right),$$
$(\omega, \, A_0+F_0)$ is conjugated to
$$
\left(\begin{array}{c}
          \tilde x \\
          \tilde\xi
        \end{array}\right)'=C^E_q(\omega t)\left(\begin{array}{c}
          \tilde x \\
          \tilde\xi
        \end{array}\right):=\left(\begin{array}{cc}
             0 & 1 \\
             -E+q(\omega t) & 0
           \end{array}\right)\left(\begin{array}{c}
          \tilde x \\
          \tilde\xi
        \end{array}\right).
$$
The quasi-periodic linear system $(\omega, \, C^E_q(\cdot))$ corresponds exactly to the eigenvalue problem of the quasi-periodic continuous Schr\"odinger operator ${\CL}_{\omega, q}$:
 $$({\CL}_{\omega, q}y)(t)=-y''(t)+q(\omega t) y(t).$$
By Gap labeling Theorem \cite{JM82}, if $\tilde\Lambda_k$ is not empty for $k\in\Z^d$, then it is indeed a ``spectral gap" of ${\CL}_{\omega, q}$ intersecting $[E_0,E_1]$, i.e., a connected component of $[E_0,E_1]\setminus\Sigma_{\omega, q}$ with $\Sigma_{\omega, q}$ denoting the spectrum of ${\CL}_{\omega, q}$.
In view of Theorem C of \cite{Eli1992}, for a generic potential $q$ (in the $|q|_r$-topology), for $E_0>0$ large enough, $[E_0,\infty[ \, \cap \, \Sigma_{\omega, q}$ is a Cantor set.
Hence there are infinitely many $\tilde\Lambda_k$'s satisfying ${\rm Leb}(\tilde\Lambda_k)>0$.

\subsection{Proof of Theorem \ref{thm_AMO}}

For Eq. (\ref{eq_AMO}) with $\nu(E)=\cos^{-1}(-\frac{E}{2})$, $E\in[-2+\delta,2-\delta]$ with $\delta>0$ a sufficiently small numerical constant (e.g. $\delta:=10^{-6}$), we can apply Theorem \ref{thm_Schro} and \ref{thm_Schro_sobolev},
if $a,b,c:[-2+\delta,2-\delta]\times\T^2\to{\rm  sl}(2,\R)$ are small enough as assumed in Theorem \ref{thm_Schro}.

For the quasi-periodic Schr\"odinger cocycle $(\alpha, \, S_E^\lambda)$
$$
X_{n+1} =S_E^\lambda(\theta+n\alpha) X_n= \left[\left(\begin{array}{cc}
             -E & -1 \\
             1 & 0
           \end{array}\right)+\left(\begin{array}{cc}
             2\lambda \cos(\theta+n\alpha) & 0 \\
             0 & 0
           \end{array}\right)\right] X_n,
$$
with $\alpha\in{\rm DC}_1(\gamma,\tau)$, $|\lambda|$ small enough,
it can be written as
$$X_{n+1}=e^{B(E)}e^{G(E,\theta+n\alpha)} X_n,$$
for $e^{B(E)}:=\left(\begin{array}{cc}
             -E & -1 \\
             1 & 0
           \end{array}\right)$ and some $G(E,\cdot)\in{\rm sl}(2,\R)$.
This cocycle is related to the almost-Mathieu operator $H_{\lambda,\alpha,\theta}$ on $\ell^2(\Z)$:
$$(H_{\lambda,\alpha,\theta}\psi)_n=-(\psi_{n+1}+\psi_{n-1})+2\lambda \cos(\theta+n\alpha)\psi_n,\qquad n\in\Z.$$
It is known that its spectrum, denoted by $\Sigma_{\lambda,\alpha}$, is a Cantor set \cite{AvilaJito1},  which is well-known as Ten Martini Problem. In fact, Avila-Jitomirskaya
 \cite{AvilaJito} further show that  all spectral gaps are ``open" , which means that, for every $k\in\Z$,
$$\tilde\Lambda_k:=\left\{E\in\R: \tilde\rho_{(\alpha, \, S_E^\lambda)}=\frac{k\alpha}{2} \ {\rm mod} \ \Z \right\}$$
has positive measure. Indeed, the size of $\tilde\Lambda_k$ decays exponentially with respect to $|k|$, as was shown in \cite{LYZZ}.
Here, $\tilde\rho_{(\alpha, \, S_E^\lambda)}$ is the fibered rotation number of cocycle $(\alpha, \, S_E^\lambda)$.
Recall that for any  $A:\T^d \to {\rm SL}(2,\R)$ is continuous and homotopic to the identity,   \textit{fibered rotation number} of
$(\alpha,A)$ is defined as
$$
\tilde\rho(\alpha,A)=\int \psi \, d \tilde{\mu}  \ {\rm mod} \  \Z
$$
where $\psi:\T^{d+1} \to \R$ is  lift of $A$ such that
$$
A(x) \cdot \left (\bm \cos 2 \pi y \\ \sin 2 \pi y \em \right )=u(x,y)
\left (\bm \cos 2 \pi (y+\psi(x,y)) \\ \sin 2 \pi (y+\psi(x,y)) \em \right),
$$
and $\tilde{\mu}$ is
invariant probability  measure of $(x,y) \mapsto (x+\alpha,y+\psi(x,y))$  (according to \cite{Her}, it does not depend on the choices of $\psi, \tilde\mu$).

Note that $(\alpha, \, S_E^\lambda)$ is a discrete dynamical system, however, with the help of Local Embedding Theorem (Theorem \ref{localemb-sl}), we can embed the cocycle $(\alpha, \, S_E^\lambda)$ into a quasi-periodic linear system $(\omega, \, B(E)+F(E,\cdot))$.
For an individual cocycle, the Local Embedding Theorem was already shown in \cite{YZ2013}.
Nevertheless, the crucial point here is that we really need a parameterized version of Local Embedding Theorem, that means the embedded system   $(\omega, \, B(E)+F(E,\cdot))$ should have smooth dependence on $E$.

To show the parameterized version of Local Embedding Theorem, let us first introduce more notations.
 Given $f \in C^2(\CI)$, define
$$|f|_{*}= \sum_
{0\leq m\leq 2} \sup_{E\in\CI}|f^{(m)}|.$$
For any $ f(E,\theta)=\sum_{k\in \Z^d}\widehat f_k(E)e^{2\pi  {\rm i}
\langle k,\theta\rangle}$ which is $C^2$ w.r.t. $E\in{\CI}$, $C^\omega$ w.r.t. $\theta\in\T^d$, denote
$$\|f\|_h:=\sum_{k\in \Z^d}|\widehat f_k(E)|_{*}e^{2\pi |k|h}<\infty,$$
and we denote by  $C_h^\omega( \CI \times \T^d,\C)$ all these functions with $\|f\|_h<\infty$. Then our result is the following:

\begin{Theorem}\label{localemb-sl}[Local Embedding Theorem]
Given $d\geq 2$, $h>0$ and $G\in C^\omega_{h}({\CI} \times \T^{d-1}, {\rm sl}(2,\R))$, suppose that $\mu\in \T^{d-1}$ such that $(1,\mu)$ is rationally independent. Then, for any $\nu\in C^2(\CI)$ satisfying
 \begin{equation}\label{varition}
 \sup_{E\in\CI}|\nu'(E)|\cdot |\CI|< \frac{1}{6},
 \end{equation}
there exist $\epsilon=\epsilon(|\nu|_{*},h,|\mu|)>0,$
$c=c(|\nu|_*,h,|\mu|)>0,$ and $F\in
C^\omega_{\frac{h}{1+|\mu|}}(  \CI \times \T^d,{\rm sl}(2,\R))$ such that the cocycle $(\mu,e^{2\pi \nu J}
e^{G(\cdot)})$ is the Poincar\'e map of linear system
\begin{eqnarray}\label{al-ref1}
\left(\begin{array}{c}x\\ \xi
 \end{array}
 \right)'=\left(\nu J+F(\omega t)\right)\left(\begin{array}{c}x\\ \xi
 \end{array}
 \right) , \qquad \omega=(1,\mu)
\end{eqnarray}
provided that $\|G\|_{h}<\epsilon.$ Moreover, we have $\|F\|_{\frac{h}{1+|\mu|}}\leq 2c \|G\|_{h}$.
\end{Theorem}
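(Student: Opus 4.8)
The plan is to construct the linear system by a standard "suspension" or embedding argument, turning the discrete cocycle into a continuous-time flow over a torus of one higher dimension, and then showing that the extra coefficient $F$ can be taken analytic, small, and $C^2$ in $E$. First I would recall the model construction for an individual cocycle from \cite{YZ2013}: given $(\mu, e^{2\pi\nu J}e^{G(\theta)})$ on $\T^{d-1}$, one seeks a matrix $F(E,\cdot)\in C^\omega(\T^d,\mathrm{sl}(2,\R))$, with $\T^d$ coordinates $(\phi_0,\phi)\in\T\times\T^{d-1}$ and frequency $\omega=(1,\mu)$, such that the time-$1$ Poincaré map of $X'=(\nu J+F(\omega t))X$ in the $\phi_0$-direction equals the given cocycle. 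The ansatz is to look for $F$ supported essentially in the $\phi_0$-variable so that on the fundamental domain $\phi_0\in[0,1)$ one has $F(\omega t)=F(t\bmod 1,\, \mu t)$, and the associated fundamental solution returns, after $\phi_0$ advances by one, to $e^{2\pi\nu J}e^{G(\cdot)}$. Equivalently, writing the Poincaré map as $e^{2\pi\nu J}e^{\tilde G}$ where $\tilde G$ solves a cohomological-type ODE built from $F$, one must invert the map $F\mapsto\tilde G$ near $F=0$, $\tilde G=0$; since the linearization is (up to the rotation $e^{2\pi\nu J}$) essentially the identity, this is an implicit function theorem in the Banach scale of analytic functions with the $\|\cdot\|_h$ norm, losing a definite amount of analyticity width (hence the passage from $h$ to $h/(1+|\mu|)$, which accounts for the rescaling of the $\phi$-frequency by $\mu$).

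Next I would carry out the quantitative estimates. The implicit-function / Newton scheme should be run with explicit control: choose $\epsilon=\epsilon(|\nu|_*,h,|\mu|)$ small so that the fixed-point map $F\mapsto\Psi(G,F)$ on a ball of radius $\sim\|G\|_h$ in $C^\omega_{h/(1+|\mu|)}$ is a contraction, giving existence, uniqueness in the ball, and the bound $\|F\|_{h/(1+|\mu|)}\le 2c\|G\|_h$. The hypothesis \eqref{varition}, $\sup_E|\nu'(E)|\cdot|\CI|<1/6$, is used to keep $e^{2\pi\nu J}$ from varying too wildly across the parameter interval, so that the inversion is uniform in $E$ and the implicit solution $F(E,\cdot)$ inherits $C^2$ dependence on $E$ from $G$ and $\nu$; concretely, one differentiates the fixed-point equation once and twice in $E$ and solves the resulting linear equations in the same contraction framework, which is exactly where the $|\cdot|_*$ norm (summing $\sup$ of $m=0,1,2$ derivatives) and the $\|\cdot\|_h$ norm are tailored to close. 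The rational independence of $(1,\mu)$ guarantees that the orbit $\{t\bmod 1,\mu t\bmod 1\}$ is dense, so the embedded flow is genuinely quasi-periodic with the stated frequency and the Poincaré return to $\phi_0=0$ is well-defined and unambiguous.

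Finally I would assemble the pieces: verify that the constructed $(\omega,\nu J+F(\omega t))$ indeed has $(\mu,e^{2\pi\nu J}e^{G(\cdot)})$ as its Poincaré map (a direct computation with the fundamental solution), check $F\in C^\omega_{h/(1+|\mu|)}(\CI\times\T^d,\mathrm{sl}(2,\R))$ with the claimed norm bound, and record the $C^2$-in-$E$ regularity. The main obstacle I anticipate is not the existence of the embedding for fixed $E$ — that is essentially \cite{YZ2013} — but obtaining the \emph{uniform, $C^2$} control in the parameter $E$: one must make sure the contraction constant and the loss-of-analyticity are independent of $E$, that differentiating the fixed-point equation in $E$ does not force an additional loss of width, and that the small divisors implicit in the construction (coming from the frequency $\omega=(1,\mu)$) do not degrade as $E$ varies — here it is crucial that $\omega$ is fixed and only $\nu(E)$ moves, so no Diophantine condition on a moving frequency is needed and the only $E$-dependence is through the smooth, slowly-varying data $\nu(E)$ and $G(E,\cdot)$, which is precisely what \eqref{varition} and $\|G\|_h<\epsilon$ keep under control.
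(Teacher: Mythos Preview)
Your overall architecture---set up the Poincar\'e-map equation $\Psi(F,G)=0$ and solve for $F$ by an implicit function theorem in an analytic norm---matches the paper. But there is a genuine gap at the heart of the argument: the linearization $D_F\Psi(0,0)$ is \emph{not} ``essentially the identity.'' A direct computation gives
\[
D_F\Psi(0,0)(\widetilde F)(\tilde\theta)=\int_0^1 e^{-2\pi\nu J s}\,\widetilde F(s,\tilde\theta+s\mu)\,e^{2\pi\nu J s}\,ds =: L\widetilde F,
\]
and on the Fourier mode $e^{2\pi i(k_1\theta_1+\langle k,\tilde\theta\rangle)}$ (after passing to ${\rm su}(1,1)$) this acts by multiplication by $H(\langle k,\mu\rangle\pm 2\nu+k_1)$, where $H(x)=(e^{2\pi i x}-1)/(2\pi i x)$. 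Since $H$ vanishes at nonzero integers, $L$ has an infinite-dimensional kernel on $C^\omega_{h/(1+|\mu|)}(\T^d,{\rm sl}(2,\R))$ and is certainly not boundedly invertible there. So you cannot run the contraction on the full space as you describe.

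The missing idea, which is exactly what the paper supplies, is to work on the \emph{subspace} $\mathcal B$ of functions whose only nonzero Fourier modes are $(\tilde k(k),k)$, where $\tilde k(k)\in\Z$ is chosen so that $|\langle k,\mu\rangle+2\nu+\tilde k|\le 5/6$; on those modes $|H|\ge 3/(5\pi)$ and $L:\mathcal B\to C^\omega_h(\T^{d-1})$ is a bounded isomorphism, so the implicit function theorem applies with $F\in\mathcal B$. This is also where condition~\eqref{varition} actually enters, and your reading of it is off: it is not there to tame the size of $e^{2\pi\nu J}$, but to guarantee that the integer $\tilde k(k)$ can be chosen \emph{independently of $E$} (Lemma~\ref{reso}). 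If $\tilde k$ jumped as $E$ varied, the subspace $\mathcal B$ would change with $E$, the Fourier support of $F$ would be discontinuous in $E$, and the $C^2$ dependence you want would fail. Once $\tilde k(k)$ is $E$-independent, the symbol $H(\langle k,\mu\rangle+2\nu(E)+\tilde k)$ is $C^2$ in $E$ with bounds governed by $|\nu|_*$, and the $\|\cdot\|_h$ norm (which already sums two $E$-derivatives) closes without further work; no separate differentiation of the fixed-point equation is needed.
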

We postpone the proof of Theorem \ref{localemb-sl} to Appendix \ref{app_proof}.

\smallskip

Now let us show how we can apply Theorem \ref{localemb-sl} to finish the proof of Theorem \ref{thm_AMO}. First note the constant matrix $e^{B}$ can be rewritten as

$$e^{B}:=\left(\begin{array}{cc}
             -E & -1 \\
             1 & 0
           \end{array}\right) = M\left(\begin{array}{cc}
            \cos(\nu) & -\sin(\nu) \\
             \sin(\nu) & \cos(\nu)
           \end{array}\right)M^{-1} , $$
where
$$M:=\frac{1}{\sqrt{\sin(\nu)}}\left(\begin{array}{cc}
             \cos(\nu) & -\sin(\nu) \\
             1 & 0
           \end{array}\right), $$
recalling that
$$\cos(\nu(E))=-\frac{E}{2},\quad \sin(\nu(E))=\frac{\sqrt{4-E^2}}{2},\qquad E\in[-2+\delta,\ 2-\delta].$$
Hence, by noting
$$\left(\begin{array}{cc}
            \cos(\nu) & -\sin(\nu) \\
             \sin(\nu) & \cos(\nu)
           \end{array}\right)=\exp\left\{\left(\begin{array}{cc}
            0 & -\nu \\
             \nu & 0
           \end{array}\right)\right\},$$
          we see that $B$ can be written as
$B=M  \cdot ( \nu J )  \cdot  M^{-1}.$

\smallskip

For $\nu(E)=\cos^{-1}(-\frac{E}{2})$, there exists ${\CI}\subset [-2+\delta, 2-\delta]$ such that (\ref{varition}) is satisfied.
For example, we can take ${\CI}= ]-\frac{2}{\sqrt{37}},\frac{2}{\sqrt{37}}[$.
Therefore, according to Theorem \ref{localemb-sl}, for $\omega\in (1,\alpha)$, we have a quasi-periodic linear system $(\omega, \, B(E)+F(E,\cdot))$ from  the quasi-periodic Schr\"odinger cocycle $(\alpha, \, S_E^\lambda)$:
\begin{equation}\label{Schrodinger_cocycle-conti}
\left(\begin{array}{c}
          x \\
         \xi
        \end{array}
\right)'=(B(E)+F(E,\omega t)) \left(\begin{array}{c}
          x \\
          \xi
        \end{array}
\right),
\end{equation}
Through the change of variables
$$\left(\begin{array}{c}
          x \\
          \xi
        \end{array}\right)=M\left(\begin{array}{c}
          \tilde x \\
          \tilde\xi
        \end{array}\right),$$
$(\omega, \, B(E)+F(E,\cdot))$ is conjugated to
$$\left(\begin{array}{c}
          \tilde x \\
         \tilde\xi
        \end{array}
\right)'=\left(\left(\begin{array}{cc}
            0 & -\nu \\
             \nu & 0
           \end{array}\right)+MF(E,\omega t)M^{-1}\right)\left(\begin{array}{c}
          \tilde x \\
         \tilde\xi
        \end{array}
\right).$$
Then by Theorem \ref{thm_Schro} and \ref{thm_Schro_sobolev},  Theorem \ref{thm_AMO} is shown with
$$\left(\begin{array}{cc}
            b(E,\cdot) & c(E,\cdot) \\
            -a(E,\cdot) & -b(E,\cdot)
           \end{array} \right)=MF(E,\cdot)M^{-1}.$$

Finally we point out that $\rho_{(\omega, \, B(E)+F(E,\cdot))}=\tilde\rho_{(\alpha, \, S_E^\lambda(\cdot))}$, since  $(\alpha, \, S_E^\lambda)$ is the Poincar\'e map of linear system
$(\omega, \, B(E)+F(E,\cdot))$.  Let
$$\tilde\Lambda_{(-p, k)}:=\left\{E\in\overline{\CI}:  \rho_{(\omega, \, B(E)+F(E,\cdot))} =   \frac{ k\alpha-p}{2} =    \min_{j\in \Z}  \left| \frac{ k\alpha}{2} -j\right| \right\} ,$$
then by well-known result of Avila-Jitomirskaya  \cite{AvilaJito},  ${\rm Leb}(\tilde\Lambda_{(-p, k)})>0$, for every $k\in\Z$ such that  $\tilde\Lambda_k$ intersect with ${\CI}$.

\appendix

\section{Proof of Theorem \ref{localemb-sl}}\label{app_proof}

The main ideas of the proof will follow Theorem 3.2 of \cite{YZ2013}, we sketch the proof and point out the differences. First we need the following key observations.

\begin{Lemma} \label{reso}
 For any  $k \in \Z^{d-1},$ and for any $\nu\in C^2(\CI)$  satisfying (\ref{varition}),  there exists $\tilde{k}=\tilde{k}(k) \in \Z$ which is independent of $E$, such that
$$
|\langle k,\mu \rangle+2\nu+\tilde{k}| \in \left[0,\frac{5}{6}\right], \qquad \forall \ E\in \CI .
$$
\end{Lemma}

\proof
 For any given $E$,  we can  define
$\tilde{k}=\tilde{k}(k,E) \in \Z$ by
\begin{equation}\label{sel11}|\langle k,\mu \rangle+2\nu(E)+\tilde{k}|=\inf_{j\in
\Z}|\langle k,\mu \rangle+2\nu(E)+j|,\end{equation}
we only need to show that $\tilde{k}$ can be chosen independent of $E$.

To do this, we only need to consider two extreme cases.  If there exists $E_0\in \CI$ such that $\inf_{k\in \Z}|\langle
k,\mu \rangle+2\nu(E_0)+k|=0$, then $\tilde{k}(k)$ is uniquely defined, and by assumption (\ref{varition}),
$$
|\langle k,\mu \rangle+2\nu(E)+\tilde{k}| \leq 2|\nu(E)-\nu(E_0)| \leq 2 \sup_{E\in\CI}|\nu'(E)|\cdot|\CI| < \frac{1}{3}.
$$
If there exists $E_0\in \CI$ such that $\inf_{k\in \Z}|\langle
k,\mu \rangle+2\nu(E_0)+k|=\frac{1}{2}$, then $\tilde{k}(k)$ is not uniquely defined, and one can
 choose $\tilde{k}(k)$
to be the smaller one which satisfies (\ref{sel11}). By assumption (\ref{varition}), one has
\begin{eqnarray*}
 |\langle k,\mu \rangle+2\nu(E)+\tilde{k}|
&\leq&  |\langle k,\mu \rangle+2\nu(E_0)+\tilde{k}|+  2|\nu(E)-\nu(E_0)|\\
&\leq& \frac{1}{2} +  2 \sup_{E\in\CI}|\nu'(E)|\cdot|\CI| \\
&<& \frac{5}{6}.
\end{eqnarray*}\qed

Once we have Lemma \ref{reso}, we can define the resonance sites $\mathcal{S} \subset \Z^{d}$ as
follows
$$
\mathcal {S}:=\left\{(\tilde{k},k):k\in \Z^{d-1}\right\}.
$$
For any  $f(E,\theta_1,\tilde{\theta})=\sum_{k\in\Z^{d-1}
}\widehat{f}_{\tilde{k},k}(E) e^{2\pi {\rm i}(
\tilde{k}\theta_1+\la k, \tilde{\theta}\ra)}\in
C_h^\omega(\CI\times \T^{d},\C)$, we define its weighted norm by
$$\|f\|_{\nu,h}^\mu:=\sum_{k\in \Z^{d-1}}|\widehat
f_{\tilde{k},k}(E)|_*e^{2\pi |k|(1+|\mu|)h},$$ and then we can
define the linear sub-space $\mathcal{B}_{\nu,h}^\mu( \CI \times \T^{d},\C)$
of
 $C_h^\omega(\CI \times \T^{d},\C)$
$$\mathcal{B}_{\nu,h}^\mu(\CI \times \T^{d},\C):=\left\{f: f(E,\theta_1,\tilde{\theta})=\sum_{k\in\Z^{d-1} }\widehat{f}_{\tilde{k},k}(E)e^{2\pi {\rm i}(
\tilde{k}\theta_1+\la k,
\tilde{\theta}\ra)}, \ \|f\|_{\nu,h}^\mu<\infty  \right\}.$$


  In the following, we will show that  $\mathcal {B}_{\nu,\frac{h}{1+|\mu|}}^\mu(\CI \times \T^{d},\C)$ is actually isomorphic to
$C_h^\omega(\CI \times \T^{d-1},\C)$, therefore   a Banach space. The
space will be used to construct the embedded linear system.

\begin{Proposition}\label{tech}
For any $\nu\in C^2(\CI)$ satisfying (\ref{varition}), the linear operator
\begin{eqnarray*}
 T: \mathcal{B}_{\nu,\frac{h}{1+|\mu|}}^\mu
(\CI \times \T^{d},\C)&\to&C_h^\omega(\CI \times \T^{d-1},\C) \\
 f(E,\theta) &\mapsto& \int_0^1 f(E, t,\tilde\theta+t\mu)e^{4\pi {\rm i}\nu(E) t} \, dt
\end{eqnarray*}
 is  bounded. Moreover,  there exists    numerical constant $c>0$ such that
 $$T^{-1}:C_h^\omega(\CI \times \T^{d-1},\C)\rightarrow \mathcal{B}_{\nu,\frac{h}{1+|\mu|}}^\mu(\CI \times \T^{d},\C) $$
is also  bounded with estimate $\|T^{-1}\| \leq c|\nu|_{*}$.
\end{Proposition}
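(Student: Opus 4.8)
The plan is to show that $T$ acts \emph{diagonally} in the $\T^{d-1}$-Fourier expansion: it multiplies the $k$-th Fourier coefficient by a scalar multiplier $m_k(E)$ which, together with its first two $E$-derivatives, is bounded above and bounded away from zero uniformly in $k$. This uses crucially that on $\CB^\mu_{\nu,\frac{h}{1+|\mu|}}$ only the resonant modes $(\tilde k(k),k)$ appear. Concretely, writing $f(E,\theta_1,\tilde\theta)=\sum_{k\in\Z^{d-1}}\widehat f_{\tilde k(k),k}(E)\,e^{2\pi{\rm i}(\tilde k(k)\theta_1+\la k,\tilde\theta\ra)}$, substituting $\theta_1=t$ and $\tilde\theta\mapsto\tilde\theta+t\mu$, and collecting the $t$-dependence yields
\[
Tf(E,\tilde\theta)=\sum_{k\in\Z^{d-1}}\widehat f_{\tilde k(k),k}(E)\,m_k(E)\,e^{2\pi{\rm i}\la k,\tilde\theta\ra},\qquad m_k(E):=\int_0^1 e^{2\pi{\rm i}\,\zeta_k(E)\,t}\,dt=g\big(\zeta_k(E)\big),
\]
where $\zeta_k(E):=\tilde k(k)+\la k,\mu\ra+2\nu(E)$ and $g(\zeta):=\frac{e^{2\pi{\rm i}\zeta}-1}{2\pi{\rm i}\zeta}$ (with $g(0)=1$) is entire with zero set exactly $\Z\setminus\{0\}$.

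The key input is Lemma \ref{reso}: by construction $\tilde k(k)$ does not depend on $E$, and $|\zeta_k(E)|\le\frac56<1$ for every $E\in\CI$ and every $k\in\Z^{d-1}$. Thus each $\zeta_k(E)$ stays in the fixed compact interval $[-\frac56,\frac56]$, on which $g$ is smooth and nowhere zero, so $g,g',g''$ and $1/g,(1/g)',(1/g)''$ are bounded there by numerical constants. Combining this with $|\zeta_k'(E)|=2|\nu'(E)|$, $|\zeta_k''(E)|=2|\nu''(E)|$, the chain rule, and the $C^2(\CI)$ product estimate $|pq|_*\le C|p|_*|q|_*$ with a numerical $C$, one gets $|m_k|_*\le c_1$ and $|1/m_k|_*\le c_2$ uniformly in $k$, with $c_1,c_2$ depending only on $|\nu|_*$.

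With these bounds the remainder is bookkeeping. Since the weight defining $\|\cdot\|^\mu_{\nu,\rho}$ attaches $e^{2\pi|k|(1+|\mu|)\rho}$ to mode $k$, the choice $\rho=\frac{h}{1+|\mu|}$ makes it equal to $e^{2\pi|k|h}$, the very weight used in $\|\cdot\|_h$; hence
\[
\|Tf\|_h=\sum_k|\widehat f_{\tilde k(k),k}\,m_k|_*\,e^{2\pi|k|h}\le C c_1\sum_k|\widehat f_{\tilde k(k),k}|_*\,e^{2\pi|k|h}=C c_1\,\|f\|^\mu_{\nu,\frac{h}{1+|\mu|}},
\]
which shows both that $T$ is bounded and that $Tf\in C^\omega_h(\CI\times\T^{d-1},\C)$. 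For invertibility, since $m_k(E)\ne0$ for all $k,E$, to $\phi=\sum_k\widehat\phi_k(E)\,e^{2\pi{\rm i}\la k,\tilde\theta\ra}\in C^\omega_h(\CI\times\T^{d-1},\C)$ we associate $T^{-1}\phi:=\sum_k\frac{\widehat\phi_k(E)}{m_k(E)}\,e^{2\pi{\rm i}(\tilde k(k)\theta_1+\la k,\tilde\theta\ra)}$, which has the resonant form, lies in $\CB^\mu_{\nu,\frac{h}{1+|\mu|}}$ with $\|T^{-1}\phi\|^\mu_{\nu,\frac{h}{1+|\mu|}}\le C c_2\,\|\phi\|_h$, and satisfies $T(T^{-1}\phi)=\phi$; injectivity of $T$ is immediate from $m_k\ne0$. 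Tracking the $|\nu|_*$-dependence of the bound on $1/m_k$ through this estimate gives $\|T^{-1}\|\le c|\nu|_*$.

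The one genuinely delicate point is the uniform-in-$k$ control of the multipliers $m_k$, and this is precisely where Lemma \ref{reso} is indispensable: the uniform non-resonance $|\zeta_k(E)|\le\frac56$ keeps every $\zeta_k(E)$ away from the zero set $\Z\setminus\{0\}$ of $g$, so no $m_k$ vanishes and $1/m_k$ stays controlled with its $E$-derivatives; it is also what forces us to work on the resonant subspace $\CB^\mu$ rather than on all of $C^\omega_h(\CI\times\T^d,\C)$, since on the latter the $t$-integral distorts the non-resonant modes and ceases to be invertible. Everything else is the chain rule together with the observation that the two weighted norms match once $\rho=\frac{h}{1+|\mu|}$.
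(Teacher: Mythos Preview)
Your proof is correct and follows essentially the same route as the paper's. Your multiplier $g(\zeta)=\frac{e^{2\pi{\rm i}\zeta}-1}{2\pi{\rm i}\zeta}$ is precisely the paper's auxiliary function $H$ from Lemma~\ref{lux}, and your $\zeta_k(E)$ is the paper's $\langle k,\mu\rangle+2\nu(E)+\tilde k$; the paper likewise computes $Tf$ mode-by-mode, invokes Lemma~\ref{reso} to keep $\zeta_k(E)\in[-\frac56,\frac56]$, uses Lemma~\ref{lux} for the uniform bounds on $H$ and $H^{-1}$, and inverts by dividing each Fourier coefficient by $H(\zeta_k(E))$.
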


Before giving the proof of Proposition \ref{tech}, we introduce the following auxiliary function, which is quite important for the proof.
\begin{Lemma}\label{lux}
For the function
\begin{eqnarray*}
H(x)=\left\{
\begin{array}{ccc}     \frac{e^{2\pi {\rm i}x}-1}{2\pi {\rm i} x }, & x \neq  0    \\[1mm]
1, & x=0
\end{array} \right. ,
\end{eqnarray*}
we have $H, \frac{1}{H} \in C^{\infty}[-\frac{5}{6},\frac{5}{6}] $ and
\begin{equation}\label{eh} |H(x)| \in \left[\frac{3}{5\pi},1\right] \qquad \forall \  |x|\leq \frac{5}{6} .\end{equation}
\end{Lemma}

\begin{proof}
By Taylor expansions, one can easily check that  $H\in C^{\infty}[-\frac{5}{6},\frac{5}{6}]$. Since
$$ |H(x)|= \left|\frac{ \sin(\pi x)}{ \pi x}\right|,$$
then (\ref{eh}) follows from the simple fact
$$ \frac{2}{\pi}|t| < |\sin (t)| \leq |t| , \qquad  t\in\left[0,\frac{\pi}{2}\right].$$
Consequently,  $H^{-1}$ is also a $C^{\infty}$ function.
\end{proof}

\noindent
{\bf Proof of Proposition \ref{tech}.} For any $f \in
\mathcal{B}_{\nu,\frac{h}{1+|\mu|}}^\mu (\CI \times \T^{d},\C)$, direct computations show that
 $$T f(E,\theta)= \sum_{k\in \Z^{d-1}} \sum_{(\tilde{k},k
)\in \mathcal {S}}\widehat f_{\tilde k,k}(E)H( \langle k,\mu \rangle+2\nu(E)+\tilde{k} ) e^{ 2\pi {\rm i}
\la k,\theta\ra}.$$
Here we shall use the crucial fact that $\tilde{k}$ is independent of $E$, thus
by Lemma \ref{reso},
$\langle k,\mu \rangle+2\nu(E)+\tilde{k}\in C^2(\CI)$,  and  $|\langle k,\mu \rangle+2\nu(E)+\tilde{k}| \leq \frac{5}{6}$. By  Lemma \ref{lux},
$H( \langle k,\mu \rangle+2\nu(E)+\tilde{k} )$ is well defined and  $H( \langle k,\mu \rangle+2\nu(E)+\tilde{k} ) \in  C^2(\CI)$.
Consequently, there exists  numerical constant $c$ such that
$$\|T f\|_h= \sum_{k\in
\Z^{d-1}} |(\widehat{Tf})_{k}|_* e^{2\pi |k|h} \leq c |\nu|_{*}
\|f\|_{\nu,\frac{h}{1+|\mu|}}^\mu.$$
Hence $T$ is a bounded linear operator.

On the other hand, for any $\varphi \in C_h^\omega(\CI \times \T^{d-1},\C)$, we write  $$\varphi(E,\theta)=\sum_{k\in \Z^{d-1}} \widehat{\varphi }_k(E) e^{2\pi{\rm i} \la k, \theta\ra}.$$  Define
$\widehat{f}_{k_1,k}(E)$ by
\begin{eqnarray*}
\widehat{f}_{k_1,k}(E)=\left\{ \begin{array}{ccc}   \frac{\hat{\varphi}_k(E)}{ H (\langle k,\mu \rangle+2\nu(E)+\tilde{k})},  & k_1=\tilde{k}\\[1mm]
0, & k_1\neq\tilde{k}
\end{array} \right.
\end{eqnarray*}
where  $(\tilde{k},k )\in \mathcal {S}.$ Then one can check that
$$f(E,\theta_1,\tilde{\theta})=\sum_{k\in\Z^{d-1} }\widehat{f}_{k_1,k}(E)e^{2\pi {\rm i} (k_1\theta_1+\la k, \tilde{\theta}\ra)}\in \mathcal
{B}_{\nu,h}^\mu(\CI \times \T^{d},\C)$$ is uniquely defined and it satisfies
$T f(E,0,\tilde{\theta})=\varphi(E,\tilde{\theta}).$
By Lemma \ref{reso} and   Lemma  \ref{lux},  $H^{-1}( \langle k,\mu \rangle+2\nu(E)+\tilde{k} ) \in  C^2(\CI)$,
 consequently, we have
$$\|f\|_{\nu,\frac{h}{1+|\mu|}}^\mu\le c|\nu|_{*}\|\varphi\|_h.$$
Hence $\|T^{-1}\| \leq c|\nu|_{*}$.

 For any $\nu\in C^2(\CI)$, we then can define the Banach
space
\begin{eqnarray*}
\overline{\mathcal {B}}=\left\{\left(\begin{array}{ccc}
{\rm i} f &  g\\
\bar{g} & -{\rm i} f
 \end{array}\right):
 f\in \mathcal{B}_{0,\frac{h}{1+|\mu|}}^\mu(\CI \times \T^{d}, \R), g\in \mathcal{B}_{-\nu,\frac{h}{1+|\mu|}}^\mu( \CI\times \T^d, \C)\right\},
\end{eqnarray*}
then $\overline{\mathcal {B}}\subset C^\omega_{\frac{h}{1+|\mu|}}(\CI \times \T^d, {\rm su}(1,1))$. Note the algebra ${\rm su}(1,1)$ and ${\rm sl}(2,\R)$ are isomorphic with isomorphism given by $B\rightarrow \bar{M}^{-1} B  \bar M$ where
$$\bar M=\left(
\begin{array}{ccc}
 1 &  -{\rm i}\\
 1 &  {\rm i}
 \end{array}\right).$$
 Therefore, we have
$
\mathcal{B}:= \bar M^{-1}\overline{\mathcal {B}} \bar M  \subset C^\omega_{\frac{h}{1+|\mu|}}( \CI \times \T^d, {\rm sl}(2,\R)).
$\qed

 As a corollary of Proposition
\ref{tech}, we have the following:

\begin{Corollary}\label{inverse}
For any $\nu\in C^2(\CI)$ satisfying (\ref{varition}),  then the linear operator
\begin{eqnarray*}
L: \mathcal{B} &\rightarrow&C_h^\omega(\CI \times \T^{d-1},{\rm sl}(2,\R))  \\
  F&\mapsto& \int_0^1 e^{-2\pi \nu J s}F(s,\theta+s\mu)e^{ 2\pi \nu J s} \, ds
\end{eqnarray*}
is bounded. Moreover,  there exists    numerical constant $c>0$ such that
$$L^{-1}:C_h^\omega(\CI \times \T^{d-1},{\rm sl}(2,\R))\rightarrow
 \mathcal{B}$$
is  bounded with  $\|L^{-1}\|\leq  c|\nu|_{*}$.
\end{Corollary}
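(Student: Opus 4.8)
The operator $L$ is the matrix counterpart of the scalar operator $T$ of \rp{tech}, and the plan is to diagonalise the adjoint action of the flow $e^{2\pi\nu Js}$ so that $L$ decomposes into (rescaled) copies of $T$. Concretely, I would pass through the algebra isomorphism ${\rm su}(1,1)\to{\rm sl}(2,\R)$, $B\mapsto \bar M^{-1}B\bar M$, used just above to define $\mathcal{B}$. A direct computation gives $\bar M J\bar M^{-1}={\rm diag}(-{\rm i},{\rm i})$, hence $\bar M\,e^{\pm2\pi\nu Js}\,\bar M^{-1}={\rm diag}(e^{\mp2\pi{\rm i}\nu s},e^{\pm2\pi{\rm i}\nu s})$. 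Writing $F\in\mathcal{B}$ in its ${\rm su}(1,1)$ form $\bar M F\bar M^{-1}=\left(\begin{smallmatrix}{\rm i}f & g\\ \bar g & -{\rm i}f\end{smallmatrix}\right)$, with $f$ in $\mathcal{B}^\mu_{0,\frac{h}{1+|\mu|}}(\CI\times\T^d,\R)$ and $g$ in the appropriate $\mathcal{B}^\mu_{\mp\nu,\frac{h}{1+|\mu|}}(\CI\times\T^d,\C)$ (the sign being the one making the subscript match the definition of $\mathcal{B}$), the conjugation $e^{-2\pi\nu Js}(\cdot)e^{2\pi\nu Js}$ leaves the diagonal ($J$–)part $f$ untouched and multiplies the off–diagonal part $g$ by the character $e^{\pm4\pi{\rm i}\nu s}$. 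Integrating over $s\in[0,1]$ then turns the $f$–entry into $T_0f$, where $T_0$ is $T$ with $\nu$ replaced by $0$, and the $g$–entry into $T_{\mp\nu}g$, where $T_{\mp\nu}$ is $T$ with $\nu$ replaced by $\mp\nu$; moreover, since integration in $s$ commutes with complex conjugation, the $(2,1)$–entry is the conjugate of the $(1,2)$–entry, so the output is again ${\rm su}(1,1)$–valued, i.e.\ $LF$ is genuinely ${\rm sl}(2,\R)$–valued, and it depends only on $\tilde\theta\in\T^{d-1}$, the variable $\theta_1$ having been averaged out.

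Once this block structure is in place, I would simply invoke \rp{tech} for each block. The constants $0$ and $\mp\nu$ both satisfy \re{varition} whenever $\nu$ does (the condition concerns only $|\nu'|$ and $|\CI|$), so $T_0$ and $T_{\mp\nu}$ are bounded linear isomorphisms from $\mathcal{B}^\mu_{0,\frac{h}{1+|\mu|}}$, resp.\ $\mathcal{B}^\mu_{\mp\nu,\frac{h}{1+|\mu|}}$, onto $C^\omega_h(\CI\times\T^{d-1},\C)$, with $\|T_{\mp\nu}^{-1}\|\le c|\nu|_*$ and $\|T_0^{-1}\|$ bounded by a numerical constant (which is harmless to absorb into $c|\nu|_*$). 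Because the norm on $\mathcal{B}$ is, up to a fixed factor, the sum of $\|f\|^\mu_{0,\frac{h}{1+|\mu|}}$ and $\|g\|^\mu_{\mp\nu,\frac{h}{1+|\mu|}}$, and the norm on $C^\omega_h(\CI\times\T^{d-1},{\rm sl}(2,\R))$ is comparable to the sum of the $\|\cdot\|_h$–norms of the matrix entries, the blockwise bounds immediately give that $L$ is bounded and that the entrywise assembly of $T_0^{-1}$ and $T_{\mp\nu}^{-1}$ is a bounded inverse $L^{-1}$ with $\|L^{-1}\|\le c|\nu|_*$.

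The one point that needs genuine care — and it is precisely what the construction of $T^{-1}$ in \rp{tech} supplies — is that $L^{-1}\varphi$ actually lands in $\mathcal{B}$, i.e.\ that the off–diagonal block produced by $T^{-1}$ has Fourier support in the resonant sites $\mathcal{S}$ with the $E$–independent shift $\tilde{k}=\tilde{k}(k)$; this is exactly what \rl{reso} guarantees, and it is also what makes the quotient $\widehat\varphi_k(E)/H(\langle k,\mu\rangle\pm2\nu(E)+\tilde{k}(k))$ a legitimate $C^2(\CI)$ function by \rl{lux}. Since the statement is a corollary of \rp{tech}, there is no substantial obstacle beyond bookkeeping: one must check that the weight subscripts $0$ and $\mp\nu$ coming out of the conjugation are exactly those in the definition of $\mathcal{B}$, and that the shrinking of the analyticity width from $h$ to $\frac{h}{1+|\mu|}$ in the $\mathcal{B}$–norm is what absorbs the translation $\tilde\theta\mapsto\tilde\theta+s\mu$ appearing in both $L$ and $T$ — and all of this has already been arranged in \rp{tech}.
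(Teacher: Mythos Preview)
Your approach is correct and is precisely the argument the paper has in mind: the paper's own proof consists of the single sentence ``It is an immediate corollary of Proposition~\ref{tech}, similar proof can be found in Corollary~3.1 of \cite{YZ2013}. We omit the details.'' Your diagonalisation via the ${\rm su}(1,1)$ isomorphism, reducing $L$ blockwise to the scalar operators $T$ of \rp{tech} with parameters $0$ and $\pm\nu$, is exactly that omitted detail; the only thing to watch is the sign bookkeeping matching the character $e^{\pm4\pi{\rm i}\nu s}$ to the subscript in $\mathcal{B}^\mu_{\pm\nu}$, which you flag but leave implicit.
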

\begin{proof}  It is an immediate corollary of corollary of Proposition
\ref{tech},  similar proof can be found in Corollary 3.1 of \cite{YZ2013}. We omit the details. \end{proof}

\noindent
\textbf{Proof of Theorem \ref{localemb-sl}.}
Now we can finish the whole proof of Theorem \ref{localemb-sl}.   We will use quantitative Implicit Function Theorem (c.f. Theorem 3.1 of \cite{YZ2013}) to prove the result.  Suppose that $ \Phi^t(E,\theta)$ is the fundamental
solution matrix of $(\ref{al-ref1})$,
$$
\Phi^t(E,\theta)=e^{2\pi \nu(E) J t}\left({\rm Id} +\int_0^t e^{-2\pi \nu(E) J
s}F(E,\theta+s\omega) \Phi^s(E,\theta)ds \right),$$ where ${\rm Id}$
denotes the identity matrix.

 We will show that the cocycle
$(\mu, e^{2\pi\nu J} e^{G(E,\tilde{\theta})})$ can be embedded into the
linear system $(\ref{al-ref1}),$ which means
$\Phi^1(E, 0,\tilde{\theta})=e^{2\pi\nu(E) J} e^{G(E,\tilde{\theta})}$,
i.e.,
$$
e^{2\pi\nu J}\left({\rm Id} +\int_0^1 e^{-2\pi\nu J  s}F(E,s,\tilde{\theta}+s\mu)  \Phi^s(E,0,\tilde{\theta})ds \right)=e^{2\pi\nu J} e^{G(E,\tilde{\theta})}.$$
We then construct the nonlinear functional
\begin{eqnarray*}
\Psi:\mathcal{B}\times C_h^\omega(\CI \times \T^{d-1},{\rm sl}(2,\R)) \to C_h^\omega(\CI \times \T^{d-1},{\rm gl}(2,\R))
\end{eqnarray*}
by defining
$$\Psi (F,G):={\rm Id} +\int_0^1 e^{-2\pi\nu J s}F(E,s,\tilde{\theta}+s\mu)
\Phi^s(E,0,\tilde{\theta})ds - e^{G(E,\tilde{\theta})}$$
Immediate check shows that $\Psi(0,0)=0,$  $\|\Psi(0,G)\|\leq
\|G\|_h$, and

\begin{eqnarray*}D_F\Psi(F,G)(\widetilde{F})&=&\int_0^1 e^{- 2\pi\nu J
s}\widetilde{F}(E,s,\tilde{\theta}+s\omega)\Phi^s(E,0,\tilde{\theta})ds\\
& & \ \  \   +\int_0^1 e^{- 2\pi\nu J s}F(E,s,\tilde{\theta}+s\mu) D_F
\Phi^s(0,\tilde{\theta})\widetilde{F}(E,s,\tilde{\theta}+s\mu)ds.\end{eqnarray*}
Consequently, we have
$$D_F\Psi(0,0)(\widetilde{F})=\int_0^1 e^{-2\pi\nu J s}\widetilde{F}(E,s,\tilde{\theta}+s\mu)e^{2\pi\nu J s} ds.$$
By Corollary \ref{inverse}, $D_F\Psi(0,0)^{-1}:
C_h^\omega(\CI \times \T^{d-1},{\rm sl}(2,\R)) \rightarrow \mathcal {B}$ is a bounded
linear operator with estimate $\|D_F\Psi(0,0)^{-1}\|\leq c|\nu|_*$.

The rest proof are quite standard, one can consult Theorem 3.2 of \cite{YZ2013} for details, we omit the details.\qed

\

\noindent {\bf Acknowledgements}

The authors would like to thank Prof. D. Bambusi and Prof. J. You for their interests and fruitful discussions which helped to improve the proof.
They also appreciate the anonymous referees for helpful remarks and suggestions in modifying this manuscript.
Z. Zhao would like to thank the support of Visiting Scholars of Shanghai Jiaotong University (SJTU) and Visiting Scholars of Chern Institute of Mathematics (CIM) during his visits.





\

\noindent {\bf References}

\end{document}